\documentclass[oneside]{amsart}

\usepackage{hyperref}
\usepackage{tikz}
\usepackage{latexsym,amssymb,amsmath,bm,enumitem,verbatim,todonotes,stmaryrd}

\usepackage{tikz}
\usetikzlibrary{shapes.geometric}
\usepackage{mathdots}
\usetikzlibrary{arrows,calc}

\tikzstyle{edge}=[shorten <=1pt, shorten >=1pt, >=stealth, line width=1pt]
\tikzstyle{vertex}=[circle, fill=black, draw, minimum size=4pt, inner sep=0pt, outer sep=0pt]

\title[Large cardinals and compactness for list colourings]{Large cardinal characterizations via compactness for list colourings}

\author{Roman Feller}
\address{Institut f\"ur diskrete Mathematik und Geometrie\\
TU Wien\\
Wiedner Hauptstrasse 8-10/104\\
1040 Vienna\\
Austria}
\email{roman.feller@tuwien.ac.at}
\urladdr{https://orcid.org/0009-0008-4825-8381}

\author{Peter Holy}
\address{Institut f\"ur diskrete Mathematik und Geometrie\\
TU Wien\\
Wiedner Hauptstrasse 8-10/104\\
1040 Vienna\\
Austria}
\email{peter.holy@tuwien.ac.at}
\urladdr{https://orcid.org/0000-0001-8643-1845}

\subjclass[2020]{Primary 03E55, 03E75; Secondary 03C75, 05C63, 05C15}
\keywords{Compact Cardinal, Weakly Compact Cardinal, Strongly Compact Cardinal, List Colouring, Graph Colouring, Compactness}

\newcommand{\ZF}{\mathrm{ZF}}
\newcommand{\id}{\mathrm{id}}
\newcommand{\G}{\mathcal G}

\newcommand{\Ord}{\mathrm{Ord}}

\newcommand{\cG}{\mathcal G}
\newcommand{\cH}{\mathcal H}
\newcommand{\cP}{\mathcal P}
\newcommand{\cR}{\mathcal R}
\newcommand{\cS}{\mathcal S}
\newcommand{\cT}{\mathcal T}
\newcommand{\ZFC}{\mathrm{ZFC}}
\newcommand{\cof}{\mathrm{cof}}
\newcommand{\Ult}{\mathrm{Ult}}
\newcommand{\crit}{\mathrm{crit}}
\newcommand{\ot}{\mathrm{ot}}
\newcommand{\GCH}{\mathrm{GCH}}

\newtheorem{fact}{Fact}
\newtheorem{lemma}[fact]{Lemma}
\newtheorem{theorem}[fact]{Theorem}
\newtheorem{corollary}[fact]{Corollary}
\newtheorem*{claim}{Claim}

\newtheorem{observation}[fact]{Observation}
\newtheorem{proposition}[fact]{Proposition}

\theoremstyle{definition}
\newtheorem{question}[fact]{Question}

\newtheorem{definition}[fact]{Definition}

\hypersetup{hidelinks}

\begin{document}

\begin{abstract}
  We investigate compactness properties with respect to list colouring, a certain form of graph colouring, with infinitely many colours. We introduce a new hierarchy of compactness cardinals, that also includes some well-established large cardinal notions, and use it to show that various types of large cardinals, including weakly compact, strongly compact, and $\delta$-strongly compact cardinals, can be characterized in terms of compactness for list colouring. We also obtain lower bounds on the size of our newly introduced compactness cardinals. %We also show that $\delta^+$-strongly compact cardinals can be characterized in  terms of compactness for the existence of homomorphisms between certain types of relational structures.
\end{abstract}

\maketitle

\section{Introduction}

Given a set $\Omega$, we say that a graph $\G=\langle G,E\rangle$ with vertex set $G$ and edge relation $E$ is \emph{$\Omega$-colourable} (or \emph{chromatically $\Omega$-colourable}) if there is a function $c\colon G\to\Omega$ such that $c(g)\ne c(h)$ whenever $E(g,h)$ holds. \emph{Compactness for $\Omega$-colouring} is the statement that for every graph $\G$, if every finite subgraph of $\G$ is $\Omega$-colourable, then~$\G$ itself is $\Omega$-colourable. Classical results of De Bruijn and Erd\H{o}s~\cite{bruijnerdos} and L\"auchli~\cite{laeuchli} (see for example \cite{cowen} or \cite{ktv} for presentations of different arguments) show that compactness for $3$-colouring is equivalent to the weak version of the axiom of choice that is the ultrafilter lemma, over the base system~$\ZF$. 
% Given these results, an obvious question is whether this can somehow be extended to situations when the relevant structure $\mathcal A$ is infinite (see Definition \ref{def:kappahomcom}). 
% Simply allowing for $\mathcal A$ to be countable makes the above false, by well-known standard arguments.
%
% \begin{observation}\label{observation:infinitecounterexample}
%   For every finite (or even countable) substructure $X$ of $(\mathbb R,<)$, where $<$ denotes the natural ordering of $\mathbb R$, there is a homomorphism from $X$ to $(\mathbb Q,<)$, however there is no homomorphism from $(\mathbb R,<)$ to $(\mathbb Q,<)$, for such a homomorphism would in particular need to be injective.
% \end{observation}
%
% We will however see that a more careful adaption to infinite structures $\mathcal A$ turns out to be interesting. 
%Let us turn back to graph colouring. %As usual, if $\G$ or~$\cH$ denote graphs, we let $G$ and $H$ denote their set of vertices.
In this paper, we investigate variants of the above when allowing for infinitely many colours, over the base system $\ZFC$. Those variants will turn out to be directly connected to certain types of large cardinals, which we will introduce next. Let $\delta$ be an infinite cardinal. $\mathcal L_{\delta,\omega}$ denotes the logic extending first-order logic by allowing for conjunctions and disjunctions of arbitrary size less than $\delta$, however with formulas using only finitely many free variables. A theory (in any logic) is said to be \emph{${<}\kappa$-satisfiable} if every subset of this theory of size less than $\kappa$ has a model.

\begin{definition}\label{def:compact}
  Let 
  %\omega<\delta\le\kappa\le\lambda$ be cardinals, 
  $\delta\le\kappa\le\lambda$ be uncountable cardinals,
  with $\delta$ regular. Then, we say that $\kappa$ is \emph{$(\delta,\lambda)$-compact} if every ${<}\kappa$-satisfiable $\mathcal L_{\delta,\omega}$-theory in a language of size at most~$\lambda$ is satisfiable. We say that $\kappa$ is \emph{$(\delta,{<}\lambda)$-compact} if $\kappa$ is $(\delta,\bar\lambda)$-compact whenever $\bar\lambda<\lambda$, and we also allow for $\lambda=\Ord$ in this case. % different notion that provides a hierarchy leading up to strong compactness is that of \emph{$\gamma$-compactness} (see for example \cite[Page 307]{kanamori}). It can be seen from Proposition \ref{proposition:filterextensionchar} below and \cite[Theorem 22.17]{kanamori} that there is some connection between these cardinals and the hierarchy of $(\kappa,\lambda)$-compact cardinals for increasing $\lambda$.
 %\roman{Astethic question: the paper by Boney and Spencer uses $\infty$ instead of $\Ord$}
\end{definition}
%\todo[inline]{Roman: Other than convention, is there a good reason for $\kappa$ to be required to be uncountable?\\Peter: No, not really. I guess one wants to have the assumption of the existence of a delta-strongly cardinal to have nontrivial consistency strength beyond ZFC. And if we allow for $\omega$, then there is no consistency strength of course. Or, phrasing it differently, one wants to exclude the trivial case of $\omega$ right away.}

Note that the above property of $\kappa$ becomes stronger as $\delta$ or $\lambda$ increase, and that if $\kappa$ is $(\delta,\lambda)$-compact, then every $\kappa^*\in[\kappa,\lambda]$ is $(\delta,\lambda)$-compact as well. We will take a closer look at $(\delta,\lambda)$-compactness in Section \ref{section:compact}, and in particular show the following:

\begin{theorem}\label{theorem:compactlarge}
  For any cardinals $\delta\le\kappa$ with $\delta$ regular and uncountable, if $\kappa$ is $(\delta,\kappa)$-compact, then $\kappa\ge\aleph_\delta$.
\end{theorem}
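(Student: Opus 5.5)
We argue by contraposition. Suppose $\delta\le\kappa$ with $\delta$ regular and uncountable, and suppose $\kappa<\aleph_\delta$. Write $\kappa=\aleph_\alpha$; then $\alpha<\delta$, and $\alpha\ge 1$ since $\kappa$ is uncountable. We will build an $\mathcal L_{\delta,\omega}$-theory $T$ in a language of size $\kappa$ that is ${<}\kappa$-satisfiable but not satisfiable. This shows that $\kappa$ is not $(\delta,\kappa)$-compact. The idea is to use infinitary disjunctions of length less than $\delta$ to force, in every model, a hierarchy of unary predicates $U_\gamma$ ($\gamma\le\alpha$) with $|U_\gamma|\le\aleph_\gamma$. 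We then add $\kappa$ distinct constants that must all lie in a single $U_\gamma$ with $\gamma<\alpha$.

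\emph{The cardinality bounds.} The language contains:
\begin{itemize}
\item constants $d_n$ for $n<\omega$;
\item unary predicates $U_\gamma$ for $\gamma\le\alpha$;
\item binary relations $<_\gamma$ and binary functions $f_\gamma$ for successor $\gamma\le\alpha$.
\end{itemize}
The axioms are as follows.
\begin{itemize}
\item $\forall x\,(U_0(x)\to\bigvee_{n<\omega}x=d_n)$. This uses a countable disjunction, which is allowed since $\delta>\omega$.
\item For $\gamma=\beta+1$: $<_\gamma$ linearly orders $U_\gamma$. Moreover, for each $y\in U_\gamma$, the map $x\mapsto f_\gamma(x,y)$ sends $\{x\in U_\gamma: x<_\gamma y\}$ injectively into $U_\beta$. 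This is first order.
\item For limit $\gamma$: $\forall x\,(U_\gamma(x)\to\bigvee_{\beta<\gamma}U_\beta(x))$. This is a disjunction of size $|\gamma|<\delta$.
\end{itemize}
By induction on $\gamma$, every model satisfies $|U_\gamma|\le\aleph_\gamma$:
\begin{itemize}
\item at successor stages, a linear order all of whose proper initial segments have size at most $\mu$ has size at most $\mu^+$;
\item at limit stages, a union of $|\gamma|$ many sets, each of size less than $\aleph_\gamma$, has size at most $\aleph_\gamma$.
\end{itemize}
In fact only the levels $\gamma<\alpha$ are needed below.

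\emph{The unsatisfiable part.} Add a unary predicate $C$ and constants $c_\xi$ for $\xi<\kappa$, together with the following axioms:
\begin{itemize}
\item $C(c_\xi)$ for each $\xi<\kappa$;
\item $c_\xi\ne c_\eta$ for all $\xi\ne\eta$;
\item the single sentence $\bigvee_{\gamma<\alpha}\forall x\,(C(x)\to U_\gamma(x))$, a disjunction of size $|\alpha|<\delta$.
\end{itemize}
The language has size $\kappa$.

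$T$ is not satisfiable. In a model, the big disjunction gives some $\gamma<\alpha$ with $C\subseteq U_\gamma$. Then $\kappa$ many distinct elements lie in a set of size at most $\aleph_\gamma<\aleph_\alpha=\kappa$, which is impossible.

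$T$ is ${<}\kappa$-satisfiable. Let $T_0\subseteq T$ have size less than $\kappa$. Then $T_0$ mentions a set $S$ of constants $c_\xi$ with $|S|<\kappa=\aleph_\alpha$, so $|S|\le\aleph_\gamma$ for some $\gamma<\alpha$. Build the standard model as follows:
\begin{itemize}
\item the universe is $\omega_\alpha$, and $U_\beta=\omega_\beta$ for each $\beta\le\alpha$;
\item $d_n=n$, so that $U_0=\omega$ is enumerated by the $d_n$;
\item $<_\beta$ is the ordinal order on $\omega_\beta$;
\item for $\beta=\beta'+1$ and $y\in\omega_\beta$, choose $f_\beta(\cdot,y)$ to be an injection of $y$ into $\omega_{\beta'}$, which exists since $|y|\le\aleph_{\beta'}$ for every $y<\omega_{\beta'+1}$;
\item interpret the constants in $S$ as distinct elements of $\omega_\gamma$, and let $C$ be exactly this set of interpretations;
\item interpret the remaining constants $c_\xi$ arbitrarily.
\end{itemize}
This structure satisfies all the bounding axioms, the disjunction via the disjunct $\gamma$, and the sentences $C(c_\xi)$ and $c_\xi\ne c_\eta$ that occur in $T_0$.

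The main point to check is that only disjunctions of length less than $\delta$ are used. These have lengths $\omega$ and $|\gamma|\le|\alpha|$, which is exactly where the hypothesis $\alpha<\delta$, i.e. $\kappa<\aleph_\delta$, enters. A second point is that the argument works uniformly for limit $\alpha$, where $\kappa$ is singular. This is the role of the predicate $C$: it turns the requirement that all constants lie in one $U_\gamma$ into a single infinitary sentence.
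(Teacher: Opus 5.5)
Your proof is correct, and it takes a genuinely different and leaner route than the paper's.

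\textbf{The paper's route.} The paper encodes an initial segment of the ordinals in its theory:
\begin{itemize}
\item constants $\alpha_i$ for $i\le\kappa$, and a ``transitive'' predicate $E$ linearly ordered by a symbol $\in$;
\item axioms forcing $|\alpha_\lambda|=\lambda$ exactly, for every cardinal $\lambda\le\kappa$. These use surjections from some $\alpha_{\bar\lambda}$ with $\bar\lambda<\lambda$, written as a disjunction over the fewer than $\delta$ cardinals below $\lambda$, together with disjunctions along cofinal sequences at limit cardinals;
\item a weakly order-preserving function $f$ on $\alpha_\kappa$ whose range must contain every $\alpha_i$.
\end{itemize}
The contradiction comes from the point $x$ with $f(x)=\alpha_\kappa$. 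It has fewer than $\kappa$ predecessors, so some $\alpha_i$ with $i<\kappa$ is missed by $f$.

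\textbf{Your route.} You only need upper bounds $|U_\gamma|\le\aleph_\gamma$. You get them from first-order injections of initial segments at successor levels and short disjunctions at limit levels. The lower bound is supplied directly by $\kappa$ distinct constants. The predicate $C$ together with the single sentence $\bigvee_{\gamma<\alpha}\forall x\,(C(x)\to U_\gamma(x))$ replaces the paper's monotone-function argument. It also handles singular $\kappa$ uniformly while keeping each small sub-theory satisfiable. Your auxiliary fact is also correct: a linear order whose proper initial segments all have size at most $\mu$ has size at most $\mu^+$. This follows by taking a cofinal well-ordered subset.

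\textbf{Comparison.} Both proofs share the same core: since $\alpha<\delta$, every cardinality-bounding disjunction has length less than $\delta$. Your version is shorter and more transparent. It avoids the paper's inductive claim that the $\alpha_i$ ($i<\lambda$) are unbounded in $\alpha_\lambda$, and the order-preservation bookkeeping for $f$. The paper's theory carries more information, namely a faithful copy of the cardinals up to $\kappa$ with $|\alpha_\kappa|=\kappa$ exactly. That extra information is not needed for this theorem.
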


In their \cite{boneyunger}, Boney and Unger introduce what they call $(\delta,\lambda)$-strongly compact cardinals, and we will show in Section \ref{section:more} that these are closely connected to our notion of $(\delta,\lambda)$-compact cardinals.

\medskip

However, the main point of introducing the above hierarchy of compactness properties is to unify the arguments that will follow, for the three well-known instances below -- note that:
\begin{itemize}
  \item $\kappa$ is \emph{weakly compact} if $\kappa$ is $(\kappa,\kappa)$-compact.
  \item $\kappa$ is \emph{$\delta$-strongly compact} if $\kappa$ is $(\delta,{<}\Ord)$-compact.
  \item $\kappa$ is \emph{strongly compact} if $\kappa$ is $(\kappa,{<}\Ord)$-compact.
\end{itemize}
%\footnote{We will establish a number of general results for $(\delta,\lambda)$-compact cardinals, but no effort was made to further investigate their properties in general, which we leave for possible future investigation.}

The fact that compact cardinals yield compactness properties for graph colouring is well-known. The following proposition is verified as an example in~\cite{bm}, and we will obtain a more general result in Proposition \ref{proposition:extensiontocompactness}.

\begin{definition}
  Let $\delta<\kappa$ be cardinals with $\kappa$ infinite. We say that \emph{$\kappa$-compactness for $\delta$-colouring} holds if whenever $\G$ is a graph with the property that every ${<}\kappa$-size subgraph $\cH$ of~$\G$ has a $\delta$-colouring, also $\G$ itself has a $\delta$-colouring.%\footnote{A $\delta$-colouring of a graph $\G=\langle G,E\rangle$ is a function $c\colon G\to\delta$ such that $c(g)\ne c(h)$ whenever $E(g,h)$ holds. In the literature, this is also referred to as a \emph{chromatic} $\delta$-colouring.}
\end{definition}

\begin{proposition}[Bagaria, Magidor]\label{prop:bm}
  If a cardinal $\kappa$ is $\omega_1$-strongly compact, then $\kappa$-compactness for $\omega$-colouring holds.
\end{proposition}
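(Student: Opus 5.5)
We will encode the existence of an $\omega$-colouring of $\G=\langle G,E\rangle$ as an $\mathcal L_{\omega_1,\omega}$-theory and then apply the definition of $\kappa$ being $\omega_1$-strongly compact, i.e.\ $(\omega_1,{<}\Ord)$-compact.

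Suppose every subgraph of $\G$ of size less than $\kappa$ has an $\omega$-colouring. We use a language with a constant symbol $\dot g$ for each $g\in G$, a constant symbol $\dot n$ for each $n<\omega$, and a unary function symbol $c$. The theory $T$ consists of three kinds of sentences. First, $\dot n\ne\dot m$ for all $n\ne m<\omega$. Second, for each $g\in G$, the countable disjunction $\bigvee_{n<\omega} c(\dot g)=\dot n$, which is a legitimate $\mathcal L_{\omega_1,\omega}$-sentence. Third, $c(\dot g)\ne c(\dot h)$ whenever $E(g,h)$ holds. The language has size $|G|+\omega$, which is some $\bar\lambda<\Ord$, so $(\omega_1,\bar\lambda)$-compactness applies.

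To check that $T$ is ${<}\kappa$-satisfiable, take $T_0\subseteq T$ with $|T_0|<\kappa$. Let $H\subseteq G$ be the set of vertices whose constants occur in $T_0$; then $|H|<\kappa$, using that $\kappa$ is uncountable, so that adding finitely many vertices per sentence keeps the size below $\kappa$. By hypothesis, the induced subgraph on $H$ has an $\omega$-colouring $d$. Build a model with universe $\omega$, interpreting $\dot n$ as $n$, interpreting $\dot g$ as $d(g)$ for $g\in H$ and arbitrarily for other $g$, and interpreting $c$ as the identity. Then every sentence of $T_0$ holds: distinctness of the $\dot n$ is clear, each disjunction is witnessed by $n=d(g)$, and each edge inequation holds because $d$ is a proper colouring on $H$.

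By compactness, $T$ then has a model $M$. Define $c^*(g)$ to be the unique $n$ with $M\models c(\dot g)=\dot n$; this $n$ exists by the disjunctions and is unique because the $\dot n^M$ are pairwise distinct. For every edge $E(g,h)$ the model satisfies $c(\dot g)\ne c(\dot h)$, so $c^*(g)\ne c^*(h)$, and hence $c^*$ is an $\omega$-colouring of $\G$. The only step needing care is the use of the infinitary disjunctions, which pin the colour values to the standard constants $\dot n$. That is exactly why $\mathcal L_{\omega_1,\omega}$ is required rather than first-order logic, and it is where $\delta=\omega_1$ enters. No other step presents an obstacle.
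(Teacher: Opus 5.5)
Your proof is correct, but it does not follow the paper's route. The paper gives no standalone proof of this proposition. It obtains it as the special case $\delta=\omega_1$, $\mathcal B=K_\omega$ of Proposition~\ref{proposition:extensiontocompactness}, as follows:
\begin{enumerate}
\item By the first part of Proposition~\ref{proposition:filterextensionchar}, $(\omega_1,{<}\Ord)$-compactness yields the $(\omega_1,\lambda)$-filter extension property.
\item This property is applied on $\omega^G$ to the ${<}\kappa$-complete filter generated by the sets $A_H$ of functions properly colouring a finite $H\subseteq G$.
\item The resulting ${<}\omega_1$-complete filter $U$ also measures the sets $B^g_i=\{f\mid f(g)=i\}$. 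Countable completeness then picks out, for each $g$, a unique $i_g$ with $B^g_{i_g}\in U$, and $g\mapsto i_g$ is the colouring.
\end{enumerate}

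In your argument, the disjunctions $\bigvee_{n<\omega}c(\dot g)=\dot n$ do exactly the work that ${<}\omega_1$-completeness of $U$ does there: they pin each vertex to a standard colour. Because you apply compactness directly to the colouring problem, you bypass ultrafilters and the \L o\'s-style model construction of Proposition~\ref{proposition:filterextensionchar} entirely. This makes your proof shorter and self-contained.

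The paper's detour buys uniformity. It runs one argument through the filter extension property, which is implied by compactness and is exactly what Theorem~\ref{th:compactnesstofilters} recovers from list-colouring compactness. That single argument covers all relational targets of size less than $\delta$ and all the size-stratified forms. So one intermediate notion serves both directions of the equivalences in Theorems~\ref{th:main} and~\ref{th:main2}.

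Two harmless nits:
\begin{itemize}
\item $(\omega_1,\bar\lambda)$-compactness is only defined for $\bar\lambda\ge\kappa$. Either observe that the statement is trivial when $|G|<\kappa$, or use $(\omega_1,\max(\bar\lambda,\kappa))$-compactness.
\item $|H|<\kappa$ only needs $\kappa$ infinite, not uncountable.
\end{itemize}
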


However, compactness for graph colouring can consistently hold at small cardinals, at least for relatively small graphs. Shelah has shown \cite[Theorem 5.2]{shelah2} that starting from a supercompact cardinal, using a model constructed by Ben-David and Magidor \cite{bdm}, it is consistent that $\aleph_\omega$-compactness holds for $\omega_n$-colouring for graphs of size $\aleph_{\omega+1}$ whenever $0<n<\omega$, while the $\GCH$ holds. In a note on his webpage \cite{unger}, Unger adapts this to show that starting from a supercompact cardinal, it is also consistent that $\aleph_{\omega_1}$-compactness holds for $\omega_{\alpha+1}$-colouring for graphs of size $\aleph_{\omega_1+1}$ whenever $0<\alpha<\omega_1$, while the $\GCH$ holds below $\aleph_{\omega_1}$.

\medskip

What seems to be very difficult is to obtain any kind of reversals (i.e., implications of compactness properties for graph colouring) beyond L\"auchli's above-mentioned classic result. A partial reversal by Shelah \cite{shelah} is that if $\kappa>\omega_1$ is a regular cardinal with a non-reflecting stationary subset of $E^\kappa_\omega$, and $\mu^\omega\le\kappa$ whenever $\mu<\kappa$, then $\kappa$-compactness for $\omega$-colouring fails for a graph of size $\kappa$.\footnote{Note that if $\kappa$ is weakly compact or $\omega_1$-strongly compact, well-known standard arguments show that every stationary subset of $E^\kappa_\omega$ reflects.} 

\medskip

In this paper, we obtain reversals that allow us to retrieve optimal large cardinal strength from compactness properties with respect to \emph{list colouring}, i.e., colouring with respect to a function (or \emph{list}) $L$ that assigns to each node of a graph a set of allowed colours. These reversals will also contrast the above-mentioned result of Shelah \cite[Theorem 5.2]{shelah2}, showing that compactness for graph colouring and compactness for list colouring behave fairly differently.

\begin{definition}
  Given a graph $\mathcal G=\langle G,E\rangle$ and a function $L$ with domain $G$, we say that a function $c$ with domain $G$ is an \emph{$L$-colouring} of $G$ if
\begin{itemize}
    \item $\forall g\in G\ c(g)\in L(g)$ and
    \item $\forall g,h\in G\ [E(g,h)\to c(g)\ne c(h)]$.\footnote{Note that $\delta$-colouring corresponds to $L$-colouring when $L$ is constant with value $\delta$.}
\end{itemize}
\end{definition}

In order to be able to talk about induced colourings of subgraphs more easily, we will also allow for the domains of $L$ and of $c$ to be (proper) supersets of $G$ in the above.
Corresponding compactness properties are given by the following:

\begin{definition}
    Let $\delta$ and $\kappa$ be cardinals, with $\kappa$ infinite.
    \begin{itemize}
        \item 
    We say that \emph{$\kappa$-compactness for $\delta$-list-colouring} holds if whenever $\G$ is a graph and $L \colon G \to \cP(\delta)$ is such that every ${<}\kappa$-size subgraph $\cH$ of $\G$ has an $L$-colouring, also $\G$ itself has an $L$-colouring.
        \item
    We say that \emph{$\kappa$-compactness for ${<}\delta$-list-colouring} holds if whenever $\G=\langle G,E\rangle$ is a graph and $L \colon G \to [\delta]^{<\delta}$ is such that every ${<}\kappa$-size subgraph $\cH$ of $\G$ has an $L$-colouring, then also $\G$ itself has an $L$-colouring.
  \end{itemize}
\end{definition}

Our main results will establish the following equivalences:

\begin{theorem}\label{th:main}
  Assume that $\delta\le\kappa\le\lambda$ are infinite cardinals, with $\delta$ regular. Then:
  \begin{enumerate}
      \item If $\delta<\kappa$ and $\lambda^\delta=\lambda$, then $\kappa$ is $(\delta^+,\lambda)$-compact if and only if $\kappa$-compactness for $\delta$-list-colouring for graphs of size at most $\lambda$ holds.
      \item If $\delta>\omega$ and $\lambda^{<\delta}=\lambda$, then $\kappa$ is $(\delta,\lambda)$-compact if and only if $\kappa$-compactness for ${<}\delta$-list-colouring for graphs of size at most $\lambda$ holds.
  \end{enumerate}  
  In particular, we obtain the following:
  \begin{itemize}
    \item An uncountable cardinal $\kappa$ is $\delta^+$-strongly compact if and only if $\kappa$-compact\-ness for $\delta$-list-colouring holds.
    \item An uncountable cardinal $\kappa$ is $\delta$-strongly compact if and only if $\kappa$-compact\-ness for ${<}\delta$-list-colouring holds.
    \item An uncountable cardinal $\kappa$ is weakly compact if and only if $\kappa=\kappa^{<\kappa}$ and $\kappa$-compactness for ${<}\kappa$-list-colouring holds for graphs of size at most $\kappa$.
    \item An uncountable cardinal $\kappa$ is strongly compact if and only if $\kappa$-compactness for ${<}\kappa$-list-colouring holds.\footnote{Note that $\kappa$-compactness for $\kappa$-list-colouring is inconsistent, as is witnessed by the complete graph on $\kappa^+$ and the constant function with value $\kappa$.}
  \end{itemize}
\end{theorem}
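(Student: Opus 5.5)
The plan splits into two directions: compactness of $\kappa$ gives compactness for list colouring by encoding colourings as an infinitary theory, and the reversal builds graphs with lists whose $L$-colourings encode models of a given ${<}\kappa$-satisfiable theory.

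\textbf{Forward direction.} Suppose $\kappa$ is $(\delta^+,\lambda)$-compact (for (1)), resp.\ $(\delta,\lambda)$-compact (for (2)). Let $\G=\langle G,E\rangle$ have size at most $\lambda$, and let $L$ be a list function all of whose ${<}\kappa$-size subgraphs are $L$-colourable. Take constants $c_g$ for $g\in G$ and $d_\alpha$ for $\alpha<\delta$, so the language has size at most $\lambda$. The theory $T$ consists of:
\begin{itemize}
\item the sentences $d_\alpha\ne d_\beta$ for $\alpha\ne\beta$;
\item for each $g$, the disjunction $\bigvee_{\alpha\in L(g)}c_g=d_\alpha$, which has size at most $\delta$ in case (1) and size ${<}\delta$ in case (2), hence lies in the relevant logic;
\item $c_g\ne c_h$ whenever $E(g,h)$.
\end{itemize}
A ${<}\kappa$-size subset of $T$ mentions fewer than $\kappa$ vertices, so it is satisfied by an $L$-colouring of the induced subgraph. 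Compactness therefore yields a model of $T$, and reading off which $d_\alpha$ each $c_g$ equals gives an $L$-colouring. This is the generalisation of Proposition~\ref{prop:bm}.

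\textbf{Reversal.} The natural route is through a filter or ultrafilter characterisation of $(\delta,\lambda)$-compactness, which I expect the paper to establish in Section~\ref{section:compact}. The characterisation should say that $\kappa$ is $(\delta,\lambda)$-compact iff every $\kappa$-complete filter on a set of size $\le\lambda$ (or on $\cP_\kappa\lambda$-type index sets) extends to a $\delta$-complete ultrafilter. The $\lambda^{<\delta}=\lambda$ (resp.\ $\lambda^\delta=\lambda$) assumptions ensure that the partitions of size ${<}\delta$ (resp.\ $\delta$) of a set of size $\lambda$ can be indexed by $\lambda$.

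To extend a $\kappa$-complete filter $F$ on $I$, $|I|\le\lambda$, I build a graph as follows:
\begin{itemize}
\item For each partition $P$ of $I$ into fewer than $\delta$ (resp.\ $\delta$) pieces, add a vertex $v_P$ with list $L(v_P)$ consisting of codes for the pieces of $P$. The intended colour is the piece lying in the ultrafilter.
\item Add gadget vertices forcing coherence. For partitions $P$ and $Q$, and pieces $A\in P$ and $B\in Q$ with $A\cap B=\emptyset$, the colours ``$A$'' at $v_P$ and ``$B$'' at $v_Q$ must not both be chosen.
\item Forbid colours for pieces whose complement lies in $F$, simply by omitting those pieces from the lists.
\end{itemize}
Implementing ``not both'' constraints between distinct colour sets requires small gadgets: e.g.\ an auxiliary vertex with two-element list $\{a,b\}$ adjacent to $v_P$ and $v_Q$, which forces $v_P\ne a$ or $v_Q\ne b$. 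Since colours are distinct ordinals, this needs a coding making $a$ and $b$ valid colours at the respective vertices; to achieve it I would use disjoint colour blocks per partition, within $\delta$ colours overall, after reindexing.

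Given ${<}\kappa$ many vertices, $\kappa$-completeness of $F$ lets me pick a point $x\in\bigcap$ of the relevant filter sets. Colouring each $v_P$ by the piece containing $x$, and colouring gadgets accordingly, gives a valid $L$-colouring, so every small subgraph is colourable. A global colouring then defines a coherent choice of pieces. Closure under intersections should follow from the disjointness constraints together with partitions refining each other, and this yields a $\delta$-complete ultrafilter extending $F$.

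\textbf{Main obstacle.} I expect the main obstacle to be this gadget construction with lists inside $\delta$ (resp.\ $[\delta]^{<\delta}$). It must faithfully express the binary incompatibility constraints, and the number of colours, vertices and partitions has to be kept within the cardinal bounds. Once the equivalences (1) and (2) are in place, the bulleted consequences follow directly:
\begin{itemize}
\item The $\delta^+$-strongly compact and $\delta$-strongly compact cases follow by letting $\lambda$ range over a cofinal class of cardinals with $\lambda^\delta=\lambda$.
\item The weakly compact case takes $\delta=\lambda=\kappa$, using $\kappa^{<\kappa}=\kappa$.
\item The strongly compact case takes $\delta=\kappa$.
\end{itemize}
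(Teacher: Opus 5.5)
\paragraph{Forward direction.} This part is correct. It is also more direct than the paper's route, which goes through the filter extension property (first part of Proposition \ref{proposition:filterextensionchar}, then Propositions \ref{proposition:extensiontocompactness2} and \ref{proposition:extensiontocompactness}).

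\paragraph{The gap in the reversal.} The property you aim for is that every $\kappa$-complete filter on a set $I$ with $|I|\le\lambda$ extends to a $\delta$-complete ultrafilter on all of $\cP(I)$. This does not characterize $(\delta,\lambda)$-compactness, and it cannot follow from list-colouring compactness for graphs of size at most $\lambda$. For $\delta=\kappa=\lambda$ it would extend the tail filter on $\kappa$ to a nonprincipal $\kappa$-complete ultrafilter. Combined with your own forward direction, this would make every weakly compact cardinal measurable.

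Concretely, your graph has a vertex $v_P$ for every partition $P$ of $I$, and there are already $2^\lambda$ partitions into two pieces. The hypothesis $\lambda^\delta=\lambda$ bounds the number of $\delta$-sequences from a set of size $\lambda$, not the number of partitions of such a set.

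\paragraph{The missing idea.} You need to relativize to a family of sets. Given $E\subseteq\cP(X)$ with $|E|\le\lambda$ and $X$ arbitrary:
\begin{itemize}
\item close $E$ under complements and under intersections of $\delta$ (resp.\ fewer than $\delta$) sets, obtaining $I$ with $|I|\le\lambda^\delta=\lambda$ (resp.\ $\lambda^{<\delta}=\lambda$);
\item use only partitions of $X$ into members of $I$;
\item ask only for a filter measuring $E$.
\end{itemize}
This is the paper's $(\delta^+,\lambda)$- (resp.\ $(\delta,\lambda)$-)filter extension property. To get from it back to compactness you still need the second part of Proposition \ref{proposition:filterextensionchar}. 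That is in Section \ref{section:filterextension}, not Section \ref{section:compact}. Its reduced-product argument has to measure $\lambda^{\delta}$ (resp.\ $\lambda^{<\delta}$) sets, and this is where the cardinal arithmetic hypotheses are actually used.

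\paragraph{The gadget.} Your gadget also needs repair. In general there are more than $\delta$ partitions (in case (1) typically $\lambda\ge\kappa>\delta$ of them), so they cannot receive pairwise disjoint colour blocks inside $\delta$. Once codes are reused, a single auxiliary vertex with list $\{a,b\}$ adjacent to $v_P$ and $v_Q$ also rules out legitimate configurations. For example, $v_P=b$ and $v_Q=a$, where $b$ and $a$ code other, intersecting pieces of $P$ and $Q$, leaves that vertex with no colour.

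A correct gadget uses two private vertices with lists $\{a,e\}$ and $\{b,e\}$. They are adjacent to each other and to $v_P$ resp.\ $v_Q$, where $e$ is a colour never used on partition vertices (e.g.\ code pieces by even ordinals and let $e=1$).

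\paragraph{How this compares with the paper.} With these repairs your encoding works:
\begin{itemize}
\item Empty pieces are excluded from the lists, since their complement $X$ lies in $F$.
\item Closure under intersections of $\delta$ (resp.\ ${<}\delta$) many sets comes from the partition $\{\bigcap_iA_i\}\cup\{(X\setminus A_i)\cap\bigcap_{j<i}A_j\mid i\}$.
\item Small subgraphs are coloured via a point in the intersection of the relevant members of $F$, as you describe.
\end{itemize}
This is a genuinely different route to Theorem \ref{th:compactnesstofilters}. The paper keeps every set-vertex two-coloured: list $\{0,1\}$, or $\{1\}$ on members of $F$, with each set adjacent to its complement. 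It then enforces $\delta$-ary intersection closure with the graphs $\cT_\delta$ (resp.\ $\cT_{\bar\delta}$) of Lemma \ref{lem:Stockmeyer}, which needs the whole construction of Section \ref{section:graphconstructions}. Letting a partition-vertex carry up to $\delta$ colours instead reduces everything to binary incompatibility gadgets.

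As written, however, your reversal targets the wrong filter property with a graph that is too large, and its proposed colour coding is infeasible.
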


% We then proceed to investigate the hierarchy of $(\delta,\lambda)$-compact cardinals, in particular verifying the following:

% \begin{theorem}\label{theorem:compactcards}
% \begin{itemize}
%   \item If $\delta\le\kappa\le\lambda$ are uncountable cardinals with $\delta$ regular and with $\lambda^{<\kappa}=\lambda$, then $\kappa$ is $(\delta,\lambda)$-compact if and only if there is a fine, ${<}\delta$-complete ultrafilter on $\cP_\kappa(\lambda)$.\footnote{The latter property defines what is called a \emph{$(\delta,\lambda)$-strongly compact} cardinal in \cite{boneyunger}.}
%   \item If $\kappa$ is the least $(\omega_1,\kappa)$-compact cardinal\footnote{More formally correct, we should say that $\kappa$ is the least cardinal $\nu$ that is $(\omega_1,\nu)$-compact.} and satisfies $\kappa^\omega=\kappa$, then $\kappa$ is a limit cardinal. In particular, we obtain the following:
%   \begin{itemize}
%     \item If $n<\omega$, $\omega_n$ is not $(\omega_1,\omega_n)$-compact.\footnote{Note that the strength of the property of $\kappa$ being $(\delta,\kappa)$-compact increases as $\delta$ increases, so actually, $\omega_n$ is not $(\delta,\omega_n)$-compact whenever $\omega_1\le\delta\le\omega_n$.}
%     \item If the $\GCH$ holds, the least $(\omega_1,\kappa)$-compact cardinal $\kappa$ is a limit cardinal.
%   \end{itemize}
%   \item $\aleph_\omega$ is not $(\omega_1,\omega_\omega)$-compact.
% \end{itemize}
% \end{theorem}

Together with Theorem \ref{theorem:compactlarge}, Theorem \ref{th:main} yields some immediate consequences on compactness for list colouring: for example, under the $\GCH$, $\kappa^+$-compactness (and hence also $\kappa$-compactness) for $\omega_n$-list-colouring for graphs of size $\kappa^+$ fails whenever $n<\omega$ and $\kappa<\aleph_{\omega_{n+1}}$. This contrasts Shelah's result \cite[Theorem 5.2]{shelah2}.

\medskip

Let us provide an overview of the contents of our paper. In Section \ref{section:filterextension}, we introduce a family of filter extension properties, and show that they are closely related to the family of compactness properties for cardinals introduced in Definition~\ref{def:compact}, thus generalizing a classic result of Bell \cite{bell}. This relationship will be important for the verification of our main results. In Section \ref{section:homomorphismcompactness}, we introduce a family of homomorphism-compactness properties\footnote{This paper was inspired by results that link the axiomatic strength of homomorphism-compactness for a finite target structure $\mathcal A$ over $\ZF$ to the computational complexity of the decision problem $\mathrm{CSP}(\mathcal A)$; see~\cite{ktv,RTW,Tardif}.
Here, the latter problem---the \textit{constraint satisfaction problem} of~$\mathcal A$---consists of deciding whether a given finite input structure $\mathcal X$ homomorphically maps to $\mathcal A$.} generalizing the compactness for graph colouring and list colouring,
and then provide the results that will yield the forward implications in Theorem~\ref{th:main}. In Section \ref{section:graphconstructions}, we construct certain families of infinite graphs that we will then make use of in the argument for the reverse implications in Theorem \ref{th:main}, which we provide in Section \ref{section:main}. We also provide a version of Theorem \ref{th:main} with respect to homomorphism-compactness in that section. In Section~\ref{section:compact}, we verify Theorem \ref{theorem:compactlarge}. In Section \ref{section:more}, we investigate the relationship between $(\delta,\lambda)$-compact and $(\delta,\lambda)$-strongly compact cardinals, and provide a further characterization of the former. Finally, we argue that list colouring is a natural generalization of graph colouring (with finitely many colours) to the case of infinitely many colours in Appendix \ref{natural}.
%, and we briefly mention some connections to questions about constraint satisfaction problems in Appendix \ref{section:csp}, which were the initial motivation for the work in this paper.\peter{Probably needs to be adapted a bit.}

\medskip

Most of our paper should be readable with only very basic knowledge of set theory. However, Section \ref{section:more} (which is not overly relevant to the other parts of our paper) is slightly more aimed at a set theoretic audience, and some basic knowledge about the interplay between ultrafilters and elementary embeddings will be helpful in order to follow the arguments in this part of our paper.

\section{Filter Extension Properties}\label{section:filterextension}

\begin{definition}
  Let $X$ be a set and $F\subseteq E\subseteq\mathcal P(X)$, with $E$ closed under complements in $X$, and $X \in E$. %
  Let $\kappa$ be an infinite cardinal. We say that $F$ is a \emph{${<}\kappa$-complete filter (on $E$)}\footnote{Note that $X=\bigcup E$.} if
  \begin{itemize}
    \item $X\in F$, $\emptyset\not\in F$,
    \item $\forall Y, Z\in E\ [Y\supseteq Z\in F\to Y\in F]$, and
    \item whenever $(Y_i)_{i<\lambda}$ is a family of elements of $F$ with $\lambda<\kappa$, we have \[\bigcap_{i<\lambda}Y_i\ne\emptyset.\footnote{More commonly, the final requirement on $F$ would be that $\bigcap_{i<\lambda}Y_i\in F$. However, for the case of singular $\kappa$, this seems to be an overly narrow restriction for many applications. Another common variant would be to require that $\bigcap_{i<\lambda}Y_i$ has the same cardinality as $X$. The definition that we chose is the most useful one for our present purposes.}\]
  \end{itemize}
  
  We call $F$ a \emph{filter} if it is a ${<}\omega$-complete filter.
  We say that a filter $F$ \emph{measures}~$E$ if either $Y\in F$ or $X\setminus Y\in F$ whenever $Y\in E$.
  %If $X\subseteq\mathcal P(A)$ in the above, we say that a filter $F$ is \emph{fine} if $\{x\in X\mid a\in x\}\in F$ whenever $a\in A$.
  \end{definition}

\begin{definition}\label{definition:filterextension}
    %Let $\omega<\delta\le\kappa\le\lambda$ be 
    Let $\delta\le\kappa\le\lambda$ be infinite
    cardinals, with $\delta$ regular.
    \begin{itemize} 
      \item We say that~$\kappa$ has the \emph{$(\delta,\lambda)$-filter extension property} if whenever~$X$ is a set, $E\subseteq\cP(X)$ is of size at most $\lambda$ and closed under complements (in $X$) with $X\in E$, and $F\subseteq E$ is a ${<}\kappa$-complete filter, then~$F$ can be extended to a ${<}\delta$-complete filter that measures $E$.
      \item The \emph{$(\delta,{<}\lambda)$-filter extension property} is the above property for $E$ of size less than $\lambda$.
    \end{itemize}
\end{definition}

%Note that if $X=\kappa$ is a cardinal, $\bar F\supseteq\{\kappa\setminus\alpha\mid\alpha<\kappa\}$, and $F$ is a ${<}\omega$-complete filter on $X$ that extends $\bar F$, then $F$ is uniform: If $F$ contained some $Y\subseteq\alpha<\kappa$, then $Y\cap(\kappa\setminus\alpha)=\emptyset$, which contradicts the ${<}\omega$-completeness of $F$. In particular, if $X=\kappa$ is a cardinal in the above definition, we can always assume that the filters $U$ that we obtain are uniform.

%\medskip

Our next proposition extends a result of Bell \cite[Main Theorem]{bell}: we extend his result to include singular cardinals $\kappa$,\footnote{This may be folklore knowledge, but we could not find a written account for the case of singular cardinals, and moreover the set theoretic literature seems to be unspecific about what notion of ${<}\kappa$-completeness of a filter should be used in this case, while seemingly, only the less standard notion that we introduced above seems to make the argument work in the case of singular cardinals $\kappa$.} and also provide a stratification of his result with respect to the parameter $\lambda$ that provides either the size of the languages for which we have certain forms of compactness, or the size of the families of sets that we would like to be able to measure.
This requires some nontrivial extra care, in particular in the proof of the second part of the following proposition:

\begin{proposition}\label{proposition:filterextensionchar}
    %Let $\omega<\delta\le\kappa\le\lambda$ be 
    Let $\delta\le\kappa\le\lambda$ be uncountable 
    cardinals, with $\delta$ regular.
    \begin{itemize} 
        \item If $\kappa$ is $(\delta,\lambda)$-compact, then $\kappa$ has the $(\delta,\lambda)$-filter extension property.
      %  \item If $\kappa$ has the $(\delta,\lambda)$-filter property, then there is a fine, ${<}\delta$-complete ultrafilter on $[\lambda]^{<\kappa}$.
        \item If $\kappa$ has the $(\delta,\lambda^{<\delta})$-filter extension property, then $\kappa$ is $(\delta,\lambda)$-compact.
    \end{itemize}
\end{proposition}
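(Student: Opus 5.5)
For the first item, I will encode the filter extension problem as an $\mathcal L_{\delta,\omega}$-theory. Given $X$, $E\subseteq\cP(X)$ with $|E|\le\lambda$, and a ${<}\kappa$-complete filter $F\subseteq E$, I take a language with a constant $c_Y$ for each $Y\in E$, a unary predicate $P$, and a constant $d$. The theory $T$ consists of the following sentences:
\begin{itemize}
  \item $P(c_Y)$ for $Y\in F$;
  \item $P(c_Y)\lor P(c_{X\setminus Y})$ for $Y\in E$;
  \item $P(c_Y)\to P(c_Z)$ for $Y\subseteq Z$ in $E$;
  \item for every family $\{Y_i\mid i<\gamma\}\subseteq E$ with $\gamma<\delta$ and $\bigcap_i Y_i=\emptyset$, the sentence $\neg\bigwedge_{i<\gamma}P(c_{Y_i})$.
\end{itemize}
The last group is an $\mathcal L_{\delta,\omega}$-sentence. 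The set $\{Y\in E\mid T\text{'s model satisfies }P(c_Y)\}$ is then a ${<}\delta$-complete filter measuring $E$ and extending $F$. Upward closure comes from the third group. The condition $\emptyset\notin$ follows from the last group with $\gamma=1$, and nonempty intersections of ${<}\delta$ many members from the last group.

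The language has size at most $\lambda$. One subtlety is that $T$ may contain more than $\lambda$ sentences, which is fine, since the definition only bounds the size of the language. For ${<}\kappa$-satisfiability, take $T_0\subseteq T$ of size ${<}\kappa$. Only ${<}\kappa$ many $Y\in F$ occur in $T_0$, so by ${<}\kappa$-completeness of $F$ their intersection contains a point $x$. Interpret $P(c_Y)$ as $x\in Y$. This principal ultrafilter satisfies all sentences of $T$. Compactness then gives a model, and this direction is routine.

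For the second item, I follow Bell's argument adapted to $\mathcal L_{\delta,\omega}$. Let $T$ be a ${<}\kappa$-satisfiable theory in a language $\tau$ of size at most $\lambda$. I will first Skolemize and Henkinize within $\mathcal L_{\delta,\omega}$, reducing to a fragment $\Phi$ of $\mathcal L_{\delta,\omega}(\tau')$. This fragment is closed under subformulas, negation and substitution of constants, and has size $\lambda^{<\delta}$, the number of formulas built from $\lambda$ symbols with ${<}\delta$-ary connectives over finitely many variables. Since $\delta$ is regular, the relevant closure stays at $\lambda^{<\delta}$.

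I will pass to Henkin theories so that truth is determined by sentences. The aim is that a complete, ${<}\delta$-consistent Henkin set of sentences in $\Phi$ yields a term model. Let $X$ be the set of Henkin-complete models (or their $\Phi$-theories) of ${<}\kappa$-subsets of $T$ together with Henkin axioms, and let $E=\{[\varphi]\mid\varphi\in\Phi\text{ a sentence}\}$, where $[\varphi]$ is the set of $x\in X$ satisfying $\varphi$. Then $|E|\le\lambda^{<\delta}$, and $E$ is closed under complements. The filter $F$ is generated by the sets $[\psi]$ for $\psi\in T$ and for the Henkin axioms. It is ${<}\kappa$-complete in the paper's sense because any ${<}\kappa$ many such sentences have a common model, which is exactly why the paper's weak notion of completeness is the right one for singular $\kappa$.

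Applying the $(\delta,\lambda^{<\delta})$-filter extension property gives a ${<}\delta$-complete $U\supseteq F$ measuring $E$. I set $\Sigma=\{\varphi\mid[\varphi]\in U\}$. Then $\Sigma$ is complete and contains $T$. It respects conjunctions of size ${<}\delta$: if $\bigwedge_i\varphi_i\notin\Sigma$, its negation is in $\Sigma$. Moreover, ${<}\delta$-completeness of $U$, in the form of nonempty intersections, prevents all $\varphi_i$ from lying in $\Sigma$ together with $\neg\bigwedge\varphi_i$.

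The main obstacle is ensuring the Henkin (witness) property, namely that $\exists v\varphi\in\Sigma$ implies $\varphi(c)\in\Sigma$ for some constant $c$. This must be arranged with witness constants fixed in advance, so that the language remains of size at most $\lambda$, and with the Henkin axioms $\exists v\varphi\to\varphi(c_\varphi)$ placed into $F$ while preserving ${<}\kappa$-satisfiability. With that in place, the canonical term model satisfies $\Sigma$ by induction on $\Phi$-formulas, and hence satisfies $T$.
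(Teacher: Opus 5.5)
Your first item is correct. Axiomatizing the ultrafilter directly with constants $c_Y$ and one predicate $P$ is a harmless re-encoding of the paper's theory of $(X,(P)_{P\in E})$ together with $\{P(c)\mid P\in F\}$, and your principal witness $x$ is the paper's choice of $c$.

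The gap is in the second item, at the step you call the main obstacle and then assume. The Henkin property is the entire existential case of the truth lemma, and you leave it unproved. Moreover, the way you propose to arrange it cannot work in general. You require the witness constants to keep the language of size at most $\lambda$, but when $\lambda^{<\delta}>\lambda$ this is impossible. Consider pairwise distinct constants $c_\xi$ ($\xi<\lambda$), extensionality for a binary relation $\in$, and, for each $s\in[\lambda]^{<\delta}$, the sentence $\exists x\,\forall y\,(y\in x\leftrightarrow\bigvee_{\xi\in s}y=c_\xi)$. This is a satisfiable $\mathcal L_{\delta,\omega}$-theory in a language of size $\lambda$ (true $\in$ on $\lambda\cup[\lambda]^{<\delta}$), yet every model has size at least $\lambda^{<\delta}$. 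A term model over a language of size $\lambda$ has at most $\lambda$ elements. For the same reason, your count of $\Phi$ as formulas over $\lambda$ symbols is off, although the bound $\lambda^{<\delta}$ survives.

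What rescues your approach is that you never need the language to be small. You only need $|\Phi|\le\lambda^{<\delta}$, since the filter extension property is applied to $E=\{[\varphi]\mid\varphi\in\Phi\}$.
\begin{itemize}
\item Add witness constants in stages. Every $\tau$-structure expands, by choosing witnesses stage by stage, to a model of all Henkin axioms. So if $X$ consists of the $\Phi$-theories of such expansions of models of the $T_0\in[T]^{<\kappa}$, then $[\theta]=X$ for every Henkin axiom $\theta$, and ${<}\kappa$-completeness of $F$ reduces to ${<}\kappa$-satisfiability of $T$.
\item Be careful with the number of stages. If $\Phi$ is all of $\mathcal L_{\delta,\omega}(\tau')$, an $\omega$-tower does not close off: e.g.\ $\exists v\bigwedge_{n<\omega}v\ne c_n$, with $c_n$ added at stage $n$, lies in no finite stage.
\item Either iterate $\delta$ times, using regularity of $\delta$ and $(\lambda^{<\delta})^{<\delta}=\lambda^{<\delta}$, or restrict $\Phi$ to instances $\varphi(\bar c)$ of $\mathcal L_{\delta,\omega}(\tau)$-formulas with finitely many witness constants plugged in, where $\omega$ stages suffice.
\end{itemize}

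With this filled in, your argument is correct, and it is the syntactic twin of the paper's proof. The paper's hull $\mathfrak B\subseteq\prod_\Delta\mathfrak A_\Delta$, closed under uniform witnesses for formulas with finitely many parameters, plays the role of your witness constants. The sets $J_{\varphi,a}$ are your $[\varphi(\bar c)]$, and $\mathfrak D=\mathfrak B/{\sim}$ is your term model. Using elements rather than symbols as parameters is exactly what lets the paper avoid the language-size and stage-counting issues.
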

\begin{proof}
    First, assume that $\kappa$ is $(\delta,\lambda)$-compact and let $F\subseteq E\subseteq\cP(X)$, with $F$ a ${<}\kappa$-complete filter, and $E$ of size at most $\lambda$, closed under complements and with $X\in E$. We ought to extend $F$ to a ${<}\delta$-complete filter $U$ that measures~$E$. 
    Let $\Sigma$ be the $\mathcal L_{\delta,\omega}$-theory of the structure \[\mathfrak A= (X,(P)_{P\in E} )\] with a unary relation $P$ corresponding to every $P\in E$, together with the set of sentences $\{P(c)\mid P\in F\}$, where $c$ is a new constant symbol. The language that we use here has size at most $\lambda$. As $F$ is ${<}\kappa$-complete, $\Sigma$ is ${<}\kappa$-satisfiable, as we may witness any collection $\Sigma'\subseteq\Sigma$ of ${<}\kappa$-many sentences by picking $c$ to be an element of the (nonempty) intersection of the ${<}\kappa$-many elements $P$ of $F$ for which $P(c)$ is in~$\Sigma'$. This means that by our assumption, $\Sigma$ is satisfiable, so let %without loss of generality,
    \[\mathfrak B= (Y,(Q_P)_{P\in E},d)\models\Sigma,\] with $d$ the interpretation of $c$ in~$\mathfrak B$. Let $U=\{P\in E\mid \mathfrak B\models Q_P(d)\}$. 
    If $P\in U$ and $R\supseteq P$ with $R\in E$, then $Q_P(d)$ implies $Q_R(d)$, and hence $R\in U$. If $P\in E$ and $R=X\setminus P$, then $Q_R=Y\setminus Q_P$, hence either $Q_P(d)$ or $Q_R(d)$ holds, so either~$P$ or~$R$ is in $U$. If $P\in F$, then $Q_P(d)$ holds, so $P\in U$. Finally, whenever $(P_i)_{i<\bar\delta}$ is a family of elements of $U$ for some $\bar\delta < \delta$, $\mathfrak B$ satisfies the $\mathcal L_{\delta,\omega}$-formula $\exists x \bigwedge_{i < \bar\delta} Q_{P_i}(x)$ as it is witnessed by $d$, hence $\bigcap_{i < \bar\delta} P_i$ is non-empty, yielding that $U$ is a ${<}\delta$-complete filter that measures $E$.
  %But also, $\mathfrak B$ satisfies the $\mathcal L_{\delta^+,\omega}$-formula stating that $d\in\bigcap_{i<\delta} Q_{P_i}$ whenever all $P_i\subseteq X$ are in $U$, and hence the $L_{\delta^+,\omega}$-formula stating that this intersection is nonempty. By our choice of $\Sigma$, this means that $\mathcal A$ thinks that $\bigcap_{i<\delta}P_i$ is nonempty whenever each $P_i$ is an element of $U$, which means exactly that $U$ is a ${<}\delta$-complete ultrafilter on $X$.

  \medskip

  For the second statement, assume that $\kappa$ has the $(\delta,\lambda^{<\delta})$-filter extension property. Let $\Sigma$ be a ${<}\kappa$-satisfiable $\mathcal L_{\delta,\omega}$-theory, in a language $\mathcal L$ of size at most $\lambda$. By the regularity of $\delta$, it follows that $\Sigma$ has size at most $\lambda^{<\delta}$. We want to show that $\Sigma$ is satisfiable. 
  %$\mathcal L_{\delta^+,\omega}$-theory. 
  %By introducing Skolem functions, we may assume that~$\Sigma$ contains only universal sentences. 
  For every $\Delta\subseteq\Sigma$ of size less than~$\kappa$, let $\mathfrak A_\Delta\models\Delta$. Let $\mathfrak A=\prod_{\Delta\in[\Sigma]^{<\kappa}}\mathfrak A_\Delta$, and let $A$ denote the domain of $\mathfrak A$. Using a L\"owenheim-Skolem type argument, we may construct a %n $\mathcal L_{\delta,\omega}$-elementary 
  substructure $\mathfrak B$ of $\mathfrak A$ of size at most $\lambda^{<\delta}$, with domain~$B$, and with the additional property that whenever $\varphi(x_1,\dots,x_{n+1})\in\mathcal L_{\delta,\omega}$ and $a=(a_1,\ldots,a_n)\in B^n$ for some $n<\omega$, there is $b\in B$ such that for every $\Delta\in[\Sigma]^{<\kappa}$, if $\mathfrak A_\Delta\models\exists x\,\varphi(x,a(\Delta))$, then $\mathfrak A_\Delta\models\varphi(b(\Delta),a(\Delta))$, where $a(\Delta):=(a_1(\Delta),\ldots,a_n(\Delta))$. This is possible for the number of $\mathcal L_{\delta,\omega}$-formulas is at most~$\lambda^{<\delta}$. For $\varphi\in\Sigma$, let $\hat\varphi=\{\Delta\in[\Sigma]^{<\kappa}\mid\varphi\in\Delta\}$. Obviously, the family $\{\hat\varphi\mid\varphi\in\Sigma\}$ is 
  the base of a ${<}\kappa$-complete filter we shall call $F$. For every %universal
  $\mathcal L_{\delta,\omega}$-formula 
  %$\mathcal L_{\delta^+,\omega}$-formula
  $\varphi(x_1,\ldots,x_n)$ and any $ a\in A^n$, let \[J_{\varphi, a}=\{\Delta\in[\Sigma]^{<\kappa}\mid\mathfrak A_\Delta\models\varphi(a(\Delta))\}.\] Using our assumption, let $U\supseteq F$ be\footnote{Note that $U\supseteq F$ just means that $U$ is a \emph{fine} filter (see also Section \ref{section:more}).} a ${<}\delta$-complete filter on $[\Sigma]^{<\kappa}$ that measures all sets $J_{\varphi,a}$ for $\mathcal L_{\delta,\omega}$-formulas $\varphi(x_1,\ldots,x_n)$ and $a\in B^n$, of which there are $\lambda^{<\delta}$ many. %Here we crucially use that the size of $B$ is at most $\lambda^{{<}\delta}$.
  We define, for $a,b\in B$, that \[a\sim b\iff\{\Delta\in[\Sigma]^{<\kappa}\mid a(\Delta)=b(\Delta)\}\in U,\] and note that a straightforward argument using that $U$ is a filter that measures any relevant set (for all of them correspond to certain $J_{\varphi,a}$ for $a\in B^n)$ shows that for all $n <\omega$, and all $a, b\in B^n$ with $a_i\sim b_i$ whenever $1\leq i \leq n$, it holds that $f(a) \sim f(b)$ whenever $f$ is an $n$-ary function of~$\mathfrak B$. %, and $R(a) \iff R(b)$ whenever $R$ is an $n$-ary relation of $\mathfrak B$. 
    %\todo[inline]{Roman: I'm slightly confused now concerning the claim that $\sim$ is a congruence. Namely, how does one prove that it behaves nicely with relations?\\ 
    %In the traditional proof of compactness using ultraproducts one considers the quotient of $\mathfrak B/_\sim$ with the same domain, but one explicitly defines the relations as follows $$\mathfrak B/_{\sim} \models R([a]_{\sim}) \iff\{ \Delta \in [\Sigma]^{<\kappa} \mid \mathfrak A_\Delta \models R(a(\Delta)) \} \in U.$$
    %The definition of functions agrees with the definition we give here. I believe that the rest of the proof should just go through with this tweaked definition, in particular the induction over formulas is really by definition now. The rest of the induction only relies on the previous induction steps and does not make use of the specific way we define the relations of $\mathfrak D$.}

  For any $a=(a_1,\ldots,a_n)\in B^n$, let $[a]_\sim=([a_1]_\sim,\ldots,[a_n]_\sim)$. We let $\mathfrak D=\mathfrak B/_{\sim}$ be the quotient structure of $\mathfrak B$ by~$\sim$, with domain $D = B/_{\sim}$, and the following operations are well-defined by the above: If $f$ is an $n$-ary function symbol and $a\in B^n$, $f^{\mathfrak D}([a]_{\sim})=[f^{\mathfrak A}(a)]_{\sim}$. If $R$ is an $n$-ary relation symbol, $R^{\mathfrak D}([a]_{\sim})$ if and only if $\{\Delta\in[\Sigma]^{<\kappa}\mid\mathfrak A_\Delta\models R(a(\Delta))\}\in U$.

  \medskip
  
  By induction on formula complexity, using existential quantification as the only quantifier step,\footnote{That is, we replace subformulas of the form $\forall x\varphi$ by $\lnot\exists x\lnot\varphi$.} we show that for every 
  $\mathcal L_{\delta,\omega}$-formula
  %$\mathcal L_{\delta^+,\omega}$-formula 
  $\varphi(x_1,\dots,x_n)$ and $a \in B^n$, \[\mathfrak D\models\varphi ([a]_\sim)\iff J_{\varphi,a}\in U:\] 
  %If along the induction, we encounter a formula $\varphi(y_1,\ldots,y_m,x_1,\ldots,x_n)$, we let $J_{\varphi,\vec a}=\{\Delta\in[\Sigma]^{<\kappa}\mid\mathfrak A_\Delta\models\forall\vec b\,\varphi(\vec b,\vec a(\Delta))\}$, and show that in this case, we have \[\mathfrak D\models\forall\vec b\,\varphi(\vec b,\vec a)\iff J_{\varphi,\vec a}\in U.\]
  This is true for atomic formulas by definition. For the case of conjunctions of size less than $\delta$, we use the ${<}\delta$-completeness of $U$ in a straightforward way. The case of negation is straightforward. Let us argue for the step of existential quantification in detail. Assume first that $\mathfrak D\models\exists y\varphi(y,[a]_\sim)$. This means, by our inductive hypothesis for $\varphi$, that there is $b\in B$ such that the set \[\{\Delta\in[\Sigma]^{<\kappa}\mid\mathfrak A_\Delta\models\varphi(b(\Delta),a(\Delta))\}\in U.\] But this set is obviously contained in $J_{\exists y\varphi,a}$, which is thus also an element of $U$, as desired. If on the other hand, $J_{\exists y\varphi,a}\in U$, by the properties of $\mathfrak B$, we may pick $b\in B$ such that $\mathfrak A_\Delta\models\varphi(b(\Delta),a(\Delta))$ whenever such $b(\Delta)$ exists, and we know this is the case on a set in $U$. This means that $J_{\varphi,b^\frown a}\in U$. By our inductive hypothesis for~$\varphi$, this implies that $\mathfrak D\models\exists y\varphi(y,[a]_\sim)$, as desired.

  We now claim that $\mathfrak D\models\Sigma$. Therefore, pick ${\sigma\in\Sigma}$. Note that for every ${\Delta \in [\Sigma]^{<\kappa}}$, $\sigma \in \Delta$ implies $\mathfrak A_\Delta \models \sigma$, therefore
  $J_{\sigma,\emptyset}\supseteq\hat\sigma \in U$. Thus, by the equivalence that we have verified above, this means that $\mathfrak D \models \sigma$, as desired. 
  %Let $\psi(x_1,\ldots,x_n)$ be obtained from $\sigma$ by (syntactically) removing all universal quantifiers. Then, for each sequence $\vec a$ of length $n$ from $D$, \[\sigma\in\Delta\in[\Sigma]^{<\kappa}\to\mathfrak A_\Delta\models\psi(\vec a(\Delta)),\] that is $\hat\sigma\subseteq J_{\psi,\vec a}\in U$, and therefore $\mathfrak D\models\psi(\vec a)$, which means that $\mathfrak D\models\sigma$.
\end{proof}

The above obviously yields an equivalence in case $\lambda^{<\delta}=\lambda$. 
The following proposition provides the forward direction of Theorem \ref{th:main}, (2).

\begin{proposition}\label{proposition:extensiontocompactness2}
  Whenever $\delta\le\kappa\le\lambda$ are infinite cardinals, and $\kappa$ has the $(\delta,\lambda)$-filter extension property (or the $(\delta,{<}\lambda)$-filter extension property), then $\kappa$-compactness for ${<}\delta$-list-colouring holds for graphs of size at most $\lambda$ (or of size ${<}\lambda$ respectively). In particular, by Proposition \ref{proposition:filterextensionchar}, this is the case if $\kappa$ is $(\delta,\lambda)$-compact.
\end{proposition}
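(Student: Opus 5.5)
We use the $(\delta,\lambda)$-filter extension property on the space $X$ of all partial colourings of small subgraphs. Let $\G=\langle G,E\rangle$ be a graph with $|G|\le\lambda$, and let $L\colon G\to[\delta]^{<\delta}$ be such that every ${<}\kappa$-size subgraph has an $L$-colouring. Let
\[X=\{(H,c)\mid H\in[G]^{<\kappa},\ c \text{ is an } L\text{-colouring of the induced subgraph on } H\}.\]
For $g\in G$ let $A_g=\{(H,c)\in X\mid g\in H\}$. For $g\in G$ and $\alpha\in L(g)$ let $B_{g,\alpha}=\{(H,c)\in X\mid g\in H,\ c(g)=\alpha\}$.

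Let $F$ consist of all supersets in $E$ of the sets $A_{\bar H}=\bigcap_{g\in\bar H}A_g$, where $\bar H\in[G]^{<\kappa}$. Let $E$ be the closure under complements of $\{X\}\cup\{A_{\bar H}\mid \bar H\in[G]^{<\kappa}\}\cup\{B_{g,\alpha}\}$. A family of fewer than $\kappa$ members of $F$ contains sets $A_{\bar H_i}$ for $i<\mu<\kappa$. Their intersection is nonempty provided $\bigcup_i\bar H_i$ has size ${<}\kappa$, since by hypothesis that union carries an $L$-colouring.

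The main obstacle is exactly here. For singular $\kappa$, a union of fewer than $\kappa$ sets each of size ${<}\kappa$ may have size $\kappa$. In addition, $|[G]^{<\kappa}|$ may exceed $\lambda$, so $E$ would be too large.

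To handle both issues, I would define $F$ from the single-vertex sets $A_g$ only. So let $F$ be the upward closure within $E$ of the finite intersections of the sets $A_g$, and let $E$ be generated from the $A_g$ and $B_{g,\alpha}$ and closed under complements. Then $|E|\le\lambda$, because $|G|\le\lambda$ and each $L(g)\subseteq\delta\le\lambda$. Also $F$ is ${<}\kappa$-complete in the paper's sense. Indeed, given fewer than $\kappa$ sets in $F$, each contains some $A_{\bar H}$ with $\bar H$ finite, and the union of fewer than $\kappa$ finite sets has size ${<}\kappa$ (for $\kappa$ infinite). By hypothesis that union has an $L$-colouring, which gives an element of all these sets. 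One must check that $F$ is upward closed and contains no empty set; the sets in $F$ are just the supersets in $E$ of nonempty sets, so this holds.

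By the filter extension property, extend $F$ to a ${<}\delta$-complete filter $U$ measuring $E$, and fix $g\in G$. We have $A_g\in U$, and $A_g=\bigcup_{\alpha\in L(g)}B_{g,\alpha}$. Suppose no $B_{g,\alpha}$ lies in $U$. Then all the complements $X\setminus B_{g,\alpha}$ lie in $U$, and these are fewer than $\delta$ many because $|L(g)|<\delta$. Together with $A_g$ they have empty intersection, contradicting ${<}\delta$-completeness. So some $B_{g,\alpha}\in U$. It is unique, since $B_{g,\alpha}\cap B_{g,\beta}=\emptyset$ for $\alpha\neq\beta$. Set $c(g)=\alpha$, so $c(g)\in L(g)$.

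It remains to see that $c$ is proper. If $E(g,h)$ and $c(g)=c(h)=\alpha$, then $B_{g,\alpha}\cap B_{h,\alpha}\in U$ would be nonempty. But any $(H,c')$ in this intersection has both $g,h\in H$ and $c'(g)=c'(h)$, which is impossible for an $L$-colouring.

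The ${<}\lambda$ version is identical, with $|E|<\lambda$ when $|G|<\lambda$. Here I would take care that $|E|<\lambda$, which holds if $\delta<\lambda$ or $|G|\cdot\delta<\lambda$; in the edge case $\delta=\lambda$, one can restrict to those $B_{g,\alpha}$ with $\alpha$ in $\bigcup_{g} L(g)$, which has size ${<}\lambda$ by regularity of $\delta$. The final sentence of the statement then follows from the first part of Proposition~\ref{proposition:filterextensionchar}.
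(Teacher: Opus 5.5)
Your proof is correct and is essentially the paper's argument. The paper works on the set $\delta^G$ of all total functions and generates $F$ from the sets $A_H$ with $|H|\le 2$, so properness of edges is enforced by the filter. You instead work on partial $L$-colourings of ${<}\kappa$-size subgraphs, so properness is built into the points, and you generate $F$ from finite intersections of the $A_g$. The remaining steps match the paper: the ${<}\kappa$-completeness check via unions of finitely generated pieces, the use of $|L(g)|<\delta$ and ${<}\delta$-completeness to select a unique $B_{g,\alpha}\in U$, and the final properness check.

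Only cosmetic points remain. Put $X$ into $E$ explicitly in your revised definition, so that $X\in F$. Avoid using $E$ both for the edge relation and for the measured family. In the properness step you only need $B_{g,\alpha}\cap B_{h,\alpha}\neq\emptyset$, which ${<}\delta$-completeness gives; you do not need this intersection to lie in $U$.
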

\begin{proof}
  Assume that $\kappa$ has the $(\delta,\lambda)$-filter extension property, let $\cG=\langle G,E\rangle$ be a graph of size at most $\lambda$, and let $L\colon G\to[\delta]^{<\delta}$. Assume that every ${<}\kappa$-size subgraph of $\cG$ has an $L$-colouring. On the set $\delta^G$, we define $F'$ to be the collection of sets
    \begin{equation*}
        A_H = \{ f \in \delta^G \colon f \upharpoonright H \text{ is an $L$-colouring of $\cH=\langle H,E\upharpoonright H^2\rangle$} \},
    \end{equation*}
    where $H$ ranges over all ${\le}2$-element subsets of $G$. Let $I$ consist of all sets in $F'$ together with $B_i^g=\{f \in \delta^G \colon f(g) = i \}$ for $g\in G$ and $i\in L(g)$, as well as the complements of these sets. Let $F$ be the closure under supersets of $F'$ within~$I$. Note that if $(H_i)_{i<\bar\kappa}$ for some $\bar\kappa<\kappa$ is a family of $\leq 2$-element subsets of $G$, with $H:=\bigcup_{i<\bar\kappa}H_i$, then $\bigcap_{i<\bar\kappa}A_{H_i} \supseteq A_H\ne\emptyset$ by our assumption on~$\mathcal G$. This means that $F$ is a ${<}\kappa$-complete filter. By our assumption on $\kappa$, and since $I$ is of size at most $\lambda$, we obtain a ${<}\delta$-complete filter $U\supseteq F$ which measures $I$. For every $g\in G$, there is a unique $i \in L(g)$ so that $B_i^g\in U$ (this uses the ${<}\delta$-completeness of $U$, that $A_{\{g\}}\in F'$, and that $|L(g)|<\delta$), and we denote this unique element $i$ by $i_g$. Observe that by our choice of $F'$, the assignment $h\colon g\mapsto i_g$ is an $L$-colouring of $\mathcal G$.

    The statement regarding the $(\delta,{<}\lambda)$-filter extension property is verified by exactly the same argument.
\end{proof}

\section{Homomorphism-Compactness}\label{section:homomorphismcompactness}

\begin{definition}\label{def:homcompactness}
  \emph{Homomorphism-compactness} for a relational structure $\mathcal B$ is the statement that, given a structure $\mathcal A$ in the same language, if there is a homomorphism ${h\colon\mathcal X\to\mathcal B}$ for every finite substructure $\mathcal X$ of $\mathcal A$, then there exists a homomorphism from $\mathcal A$ to~$\mathcal B$.
\end{definition}

Note that compactness for $3$-colouring is the same as homomorphism-compactness for the complete graph $K_3$. It is well-known and not difficult to see that the classical results of De Bruijn, Erd\H{o}s and L\"auchli mentioned in the introduction of our paper can be adapted to yield the following (this is implicit for example in \cite{ktv}):

\begin{observation}
  Over $\ZF$, the ultrafilter lemma is equivalent to the statement that whenever $\mathcal A$ is a finite relational structure, homomorphism-compactness for $\mathcal A$ holds.
\end{observation}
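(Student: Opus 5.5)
We prove the two directions separately; the forward direction is a diagram-style argument, and the reverse direction encodes ultrafilter existence by a structure with one relation per subset.

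\emph{Ultrafilter lemma implies homomorphism-compactness for finite $\mathcal B$.} Fix a finite relational structure $\mathcal B$ with domain $B$, and a structure $\mathcal A$ in the same language such that every finite substructure of $\mathcal A$ maps homomorphically to $\mathcal B$. Consider the product space $B^A$. For each finite $X\subseteq A$ let $C_X=\{f\in B^A\mid f\upharpoonright X\text{ is a homomorphism from the substructure on }X\text{ to }\mathcal B\}$. Each $C_X$ is nonempty: extend any homomorphism on $X$ arbitrarily, which needs no choice since $B$ is finite and nonempty, or $A=\emptyset$ trivially. We also have $C_X\cap C_Y\supseteq C_{X\cup Y}$, so the $C_X$ generate a proper filter on $B^A$. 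By the ultrafilter lemma, extend it to an ultrafilter $U$. For each $a\in A$ the sets $\{f\mid f(a)=b\}$, $b\in B$, are finitely many and partition $B^A$, so exactly one lies in $U$; this is definable, and we call the corresponding value $h(a)$. For a relation $R$ and a tuple $\bar a$ with $R^{\mathcal A}(\bar a)$, let $X$ be the finite set of entries of $\bar a$. The set $C_X\cap\bigcap_i\{f\mid f(a_i)=h(a_i)\}$ lies in $U$, so it is nonempty, and any element of it witnesses $R^{\mathcal B}(h(\bar a))$. Hence $h$ is a homomorphism. This direction is the routine one.

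\emph{Homomorphism-compactness implies the ultrafilter lemma.} Take $\mathcal B=K_3$. Homomorphism-compactness for $K_3$ is exactly compactness for $3$-colouring, and the ultrafilter lemma follows from compactness for $3$-colouring by the cited theorem of L\"auchli (see \cite{ktv,cowen}), over $\ZF$. If we want to avoid citing it, we can instead prove the Boolean prime ideal theorem directly with $\mathcal B=(\{0,1\},R_1,R_2,\dots)$. Encode a Boolean algebra $\mathbb P$ as a structure $\mathcal A$ with domain $\mathbb P$. We use a ternary relation $T$ holding of $(x,y,x\wedge y)$, a binary relation $N$ holding of $(x,\neg x)$, and unary relations $O$ on $\{1\}$ and $Z$ on $\{0\}$. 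In $\mathcal B$, interpret $T$ as the graph of $\min$, $N$ as $\{(0,1),(1,0)\}$, $O=\{1\}$ and $Z=\{0\}$. A homomorphism $\mathcal A\to\mathcal B$ is then exactly a homomorphism $\mathbb P\to 2$, which yields a prime ideal.

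The main obstacle is checking that each finite substructure of this $\mathcal A$ maps to $\mathcal B$ in $\ZF$. A finite $X\subseteq\mathbb P$ generates a finite subalgebra $\mathbb P_X$, whose finitely many atoms are definable from $X$. If $X$ contains both $0$ and $1$, then $\mathbb P_X$ is nontrivial and has an atom. We choose an atom and send $x\mapsto 1$ iff the atom lies below $x$. This requires only a finite choice, which is available in $\ZF$, and it preserves all the relations restricted to $X$. The case of a degenerate algebra with $0=1$ is excluded by convention. Finally, compactness applies to the full structure $\mathcal A$, and the resulting homomorphism gives the desired ultrafilter (the complement of the prime ideal).
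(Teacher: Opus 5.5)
Your proposal is correct and follows the route the paper indicates. The forward direction is the same ultrafilter-on-$B^A$ argument the paper uses in Proposition~\ref{proposition:extensiontocompactness}, with your $C_X$ and $\{f\mid f(a)=b\}$ playing the roles of $A_H$ and $B^x_i$; the reverse direction is the paper's remark that homomorphism-compactness for $K_3$ is compactness for $3$-colouring, combined with L\"auchli's theorem. Your optional Boolean-algebra encoding is also a valid self-contained substitute for citing L\"auchli: it yields the prime ideal theorem, and filter extension on sets then follows by applying it to a quotient of $\cP(S)$.
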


We generalize Definition \ref{def:homcompactness} as follows:

\begin{definition}\label{def:kappahomcom}
  Let $\kappa\le\lambda$ be infinite cardinals.
  \begin{itemize}
      \item \emph{$(\kappa,\lambda)$-homomorphism-compact\-ness} for a relational structure $\mathcal B$ is the statement that, given a structure $\mathcal A$ of size at most~$\lambda$, in the same language as~$\mathcal B$, if there is a homomorphism $h\colon\mathcal X\to\mathcal B$ for every ${<}\kappa$-size substructure $\mathcal X$ of~$\mathcal A$, then there exists a homomorphism from $\mathcal A$ to~$\mathcal B$.
      \item \emph{$(\kappa,{<}\lambda)$-homomorphism-compactness} is the same statement for structures~$\mathcal A$ of size less than $\lambda$, and we also allow for $\lambda=\Ord$ in this case.
      \item \emph{$\kappa$-homomorphism-compactness} is $(\kappa,{<}\Ord)$-homomorphism-compactness.
  \end{itemize}
\end{definition}

Note that $\kappa$-compactness for $\delta$-list-colouring is a particular instance of the above:

\begin{observation}
  There is a relational structure $\mathcal B$ with domain $\delta$ such that $\kappa$-com\-pactness for $\delta$-list-colouring is equivalent to $\kappa$-homo\-morphism-compactness for~$\mathcal B$.
\end{observation}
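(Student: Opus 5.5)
The plan is to encode a list-colouring problem as a homomorphism problem by making the lists part of the source structure. Take the language with one binary relation symbol $E$ and, for every set $S\subseteq\delta$, a unary relation symbol $P_S$. Define $\mathcal B$ with domain $\delta$ by letting $E^{\mathcal B}=\{(i,j)\in\delta^2\mid i\ne j\}$ and $P_S^{\mathcal B}=S$ for each $S\subseteq\delta$.

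The translation goes as follows. Given a graph $\cG=\langle G,E\rangle$ and $L\colon G\to\cP(\delta)$, let $\mathcal A_{\cG,L}$ have domain $G$, interpret $E$ as the edge relation of $\cG$, and set $P_S^{\mathcal A}=\{g\in G\mid L(g)=S\}$. A map $c\colon G\to\delta$ is a homomorphism $\mathcal A_{\cG,L}\to\mathcal B$ exactly when $c$ is an $L$-colouring. Indeed, the $E$-clause says that $c(g)\ne c(h)$ whenever $E(g,h)$ holds. The $P_S$-clauses say that $c(g)\in L(g)$, since $g\in P_{L(g)}^{\mathcal A}$. Substructures of $\mathcal A_{\cG,L}$ on a set $H\subseteq G$ are exactly $\mathcal A_{\cH,L\upharpoonright H}$ for the induced subgraph $\cH$ on $H$. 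So for every $H$, homomorphisms of the substructure on $H$ to $\mathcal B$ are the same as $L$-colourings of $\cH$. This proves that $\kappa$-homomorphism-compactness for $\mathcal B$ implies $\kappa$-compactness for $\delta$-list-colouring.

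For the converse, take an arbitrary structure $\mathcal A$ in this language and produce a graph with lists. Let $G$ be the domain of $\mathcal A$, and let $L(g)=\bigcap\{S\mid g\in P_S^{\mathcal A}\}$, with $L(g)=\delta$ if $g$ lies in no $P_S$. A map $c$ respects all the unary relations iff $c(g)\in L(g)$ for every $g$, so this choice of $L$ captures the unary constraints exactly. For the edges, let $g,h$ be adjacent iff $g\ne h$ and $(g,h)\in E^{\mathcal A}$ or $(h,g)\in E^{\mathcal A}$. A loop $(g,g)\in E^{\mathcal A}$ can never be mapped into $E^{\mathcal B}$, since $E^{\mathcal B}$ contains no pairs of the form $(i,i)$. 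So if some vertex has a loop, the one-element substructure on it already has no homomorphism to $\mathcal B$, and the implication holds vacuously. Otherwise, homomorphisms from any substructure on $H$ to $\mathcal B$ coincide with $L$-colourings of the induced subgraph on $H$, because restricting to $H$ commutes with both constructions. Applying $\kappa$-compactness for $\delta$-list-colouring then gives the converse.

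The main point requiring care is the converse direction, specifically the intersection used to define $L(g)$ and the loops. Neither is a serious obstacle. The only other check is that the equivalence holds level by level, for structures and graphs of the same size, and this is immediate because neither translation changes the underlying set.
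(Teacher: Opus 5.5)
Your proof is correct and takes essentially the same route as the paper: $\mathcal B$ is $K_\delta$ expanded by a unary predicate for every subset of $\delta$, and each list $L(g)$ is encoded by the one predicate $P_{L(g)}$ containing $g$. Conversely, $L(g)$ is recovered as the intersection of all predicates containing $g$. Your explicit handling of loops and non-symmetric edge relations just spells out what the paper covers with the phrase that the structure ``may be assumed to be a graph''.
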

\begin{proof}
 Let $\mathcal B$ be the complete graph $K_\delta$ with domain $\delta$ and with unary predicates~$c_x$ for every $x\subseteq\delta$, such that $c_x^\mathcal B(\gamma)$ holds if and only if $\gamma\in x$. If we consider a graph~$\G$ and $L\colon G\to\cP(\delta)$, we expand $\G$ to a structure $\mathcal A$ in the same language as~$\mathcal B$, by letting $c_x^\mathcal A(g)$ hold if and only if $L(g)=x$. Now, a homomorphism from $\mathcal A$ to~$\mathcal B$ corresponds exactly to an $L$-colouring of $\G$. On the other hand, any structure $\mathcal A=\langle G,E,(c_x)_{x\subseteq\delta}\rangle$, where $\G=\langle G,E\rangle$ may be assumed to be a graph and the $c_x$ are unary predicates, naturally yields a map $L\colon G\to\cP(\delta)$, letting $L(g)=\bigcap\{x\mid c_x^\mathcal A(g)\}$, for every $g\in G$. Then, an $L$-colouring of $G$ corresponds exactly to a homomorphism from $\mathcal A$ to $\mathcal B$.
 %The structure $\mathcal C$ is constructed similarly, as the complete graph $K_\delta$ with domain~$\delta$ and with unary predicates $c_x$ for every $x\in[\delta]^{<\delta}$, such that $c_x^\mathcal C(\gamma)$ holds if and only if $\gamma\in x$. An analogous argument as in the first case works to verify the equivalence statement with respect to $\mathcal C$.
 %\todo[inline]{R: $\kappa$-compactness for ${<}\delta$-list-colouring is not an instance of $\kappa$-compactness in general. The problem in translating the argument lies in the following. If $\mathcal A =\langle G,E,(c_x)_{x\in[\delta]^{{<}\delta}}\rangle$ is a structure and $g\in G$ is contained in no relation of the form $c_x^{\mathcal A}$, then $L(g) = \delta$ but this is not allowed for ${<}\delta$-list-colouring.
 %I would suggest to remove the ${<}\delta$-list-coloring from this observation or remark that it is not an instance of homomorphism-compactness. Namely, suppose that $\kappa$ is strongly compact, so in particular $\kappa$-compactness holds for ${<}\kappa$-list-coloring. However, $\kappa$-compactness does not hold for $\mathcal C$.}
 %\todo[inline]{Peter: Ah nice, yes, I guess I didn't check that part carefully enough. On a semi-related note, maybe we should mention somewhere early on in the paper / introduction the trivial fact that $\kappa$-compactness for $\kappa$-list colouring is inconsistent, as witnessed by the complete graph of size $\kappa^+$ (and just allowing arbitrary colours). Like as a small comment after we provide the characterization of strongly compact cardinals via list colouring.}
\end{proof}

We can now provide our promised generalization of Proposition \ref{prop:bm}. The argument is similar to the proof of Proposition \ref{proposition:extensiontocompactness2}. This proposition will also provide the forward directions in Theorem \ref{th:main}, (1) and Theorem \ref{th:main2}.

\begin{proposition}\label{proposition:extensiontocompactness}
  If $\delta\le\kappa\le\lambda$ are infinite cardinals and $\kappa$ has the $(\delta,\lambda)$-filter extension property (or the $(\delta,{<}\lambda)$-filter extension property), then whenever~$\mathcal B$ is a relational structure with domain of size less than $\delta$, $(\kappa,\lambda)$-homomorphism-compactness (or $(\kappa,{<}\lambda)$-homomorphism-compactness) holds for $\mathcal B$. In particular, by Proposition~\ref{proposition:filterextensionchar}, this is the case if $\kappa$ is $(\delta,\lambda)$-compact.
\end{proposition}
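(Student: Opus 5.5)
We mirror the proof of Proposition~\ref{proposition:extensiontocompactness2}. Let $B$ be the domain of $\mathcal B$, with $|B|<\delta$, and let $\mathcal A$ have domain $A$ of size at most $\lambda$ (respectively less than $\lambda$), such that every ${<}\kappa$-size substructure of $\mathcal A$ maps homomorphically to $\mathcal B$. Work on the set $X=B^A$ of all functions from $A$ to $B$.

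For each finite $H\subseteq A$, let
\[A_H=\{f\in B^A\mid f\upharpoonright H\text{ is a homomorphism from }\mathcal A\upharpoonright H\text{ to }\mathcal B\},\]
and let $F'$ be the family of all $A_H$. Since the language may have relation symbols of arbitrary finite arity, we use all finite $H$ rather than just ${\le}2$-element sets. For $a\in A$ and $b\in B$, let $B^a_b=\{f\in B^A\mid f(a)=b\}$. Let $I$ consist of $X$, the sets in $F'$, the sets $B^a_b$, and the complements of all these in $X$. Then $I$ is closed under complements, contains $X$, and has size at most $|[A]^{<\omega}|+|A|\cdot|B|\le\lambda$ (respectively ${<}\lambda$, since $|B|<\delta\le\kappa\le\lambda$). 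Let $F$ be the upward closure of $F'$ within $I$.

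To see that $F$ is a ${<}\kappa$-complete filter, take $\bar\kappa<\kappa$ and finite sets $(H_i)_{i<\bar\kappa}$, and put $H=\bigcup_i H_i$. Then $|H|<\kappa$ (if $\kappa$ is uncountable, since $\bar\kappa\cdot\omega<\kappa$; if $\kappa=\omega$, trivially). By hypothesis there is a homomorphism $\mathcal A\upharpoonright H\to\mathcal B$, and extending it arbitrarily to $A$ gives an element of $A_H\subseteq\bigcap_i A_{H_i}$. Since $\emptyset\notin F$, $F$ is a ${<}\kappa$-complete filter on $I$. The filter extension property then gives a ${<}\delta$-complete filter $U\supseteq F$ measuring $I$.

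For each $a\in A$, the sets $B^a_b$ for $b\in B$ partition $X$, and there are fewer than $\delta$ of them. If none belonged to $U$, all their complements would, and by ${<}\delta$-completeness their intersection would be nonempty; but that intersection is empty. Hence some $B^a_b\in U$, and it is unique because two disjoint sets cannot both lie in a filter. Set $h(a)$ to be this $b$. To check that $h$ is a homomorphism, let $R$ be an $n$-ary relation with $R^{\mathcal A}(a_1,\dots,a_n)$, and put $H=\{a_1,\dots,a_n\}$. Then $A_H\cap\bigcap_{j}B^{a_j}_{h(a_j)}$ is a finite intersection of members of $U$, hence nonempty. Any $f$ in it satisfies $f(a_j)=h(a_j)$ for each $j$ and restricts to a homomorphism on $H$, so $R^{\mathcal B}(h(a_1),\dots,h(a_n))$ holds.

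The main point requiring care is the bound $|B|<\delta$: it is exactly what allows ${<}\delta$-completeness to select a value at each $a$. The cardinality bookkeeping for $I$ is the other thing to verify, and it is routine. The final clause of the statement follows from the first bullet of Proposition~\ref{proposition:filterextensionchar}.
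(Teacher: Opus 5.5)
Your proposal is correct and follows the paper's proof essentially step for step. The paper likewise identifies the domain of $\mathcal B$ with some $\bar\delta<\delta$, works with the sets $A_H$ for finite $H\subseteq A$ together with the point-value sets and their complements, and gets ${<}\kappa$-completeness from $|\bigcup_{i<\bar\kappa}H_i|<\kappa$; it then uses ${<}\delta$-completeness to pick a unique value at each point and the nonempty intersection $A_H\cap\bigcap_j B^{a_j}_{h(a_j)}$ to verify the homomorphism, with your existence/uniqueness argument and cardinality bookkeeping simply making explicit what the paper leaves implicit.
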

\begin{proof}
  Assume that $\kappa$ has the $(\delta,\lambda)$-filter extension property, and, without loss of generality, let $\mathcal B$ be a relational structure with domain $\bar\delta<\delta$. Let $\mathcal A$ be a structure with domain $A$ of size at most $\lambda$, and in the same language as $\mathcal B$. Assume that for every ${<}\kappa$-size substructure $\mathcal X$ of $\mathcal A$, there is a homomorphism from $\mathcal X$ to $\mathcal B$.
  On the set $\bar\delta^A$, we define $F'$ to be the collection of sets
    \begin{equation*}
        A_H = \{ f \in \bar\delta^A \colon f \upharpoonright H \text{ is a homomorphism to $\mathcal B$} \},
    \end{equation*}
    where $H$ ranges over all finite subsets of $A$. Let $E$ consist of all sets in $F'$ together with $B_i^x=\{f \in \bar\delta^A \colon f(x) = i \}$ for $x\in A$ and $i<\bar\delta$, as well as the complements of these sets. Note that $E$ has size at most $\lambda$. Let $F$ be the closure under supersets of $F'$ within $E$. Note that if $(H_i)_{i<\bar\kappa}$ for some $\bar\kappa<\kappa$ is a family of finite subsets of $A$, with $H:=\bigcup_{i<\bar\kappa}H_i$, then $\bigcap_{i<\bar\kappa}A_{H_i} \supseteq A_H\ne\emptyset$ by our assumption on~$\mathcal A$, since $|H|<\kappa$. This means that $F$ is a ${<}\kappa$-complete filter. By our assumption on~$\kappa$, and since $E$ is of size at most $\lambda$, we obtain a ${<}\delta$-complete filter $U\supseteq F$ which measures $E$. For every $x\in A$, there is a unique $i < \bar\delta$ so that $B_i^x\in U$ (this uses the ${<}\delta$-completeness of $U$), and we denote this unique element $i$ by $i_x$. Observe that the assignment $h\colon x\mapsto i_x$ is a homomorphism from $\mathcal A$ to $\mathcal B$: 
    if some relation $R$ holds for the finite tuple $a =(a_1,\dots,a_n)$ in~$\mathcal A$, then 
    %$A_{\vec a}\subseteq\{f\in\bar\delta^A\colon R^{\mathcal B}(f(\vec a))\}\in U$,
    \begin{equation*}
        \bigcap_{1 \leq i \leq n} B^{a_i}_{h(a_i)} \cap  A_a \not = \emptyset,
    \end{equation*}
    which readily implies $R^{\mathcal B}(h(\vec a))$.

    The statement regarding the $(\delta,{<}\lambda)$-filter extension property is verified by exactly the same argument. %, using a structure $\mathcal A$ of size less than $\lambda$ and observing that the set $E$ defined as in the previous case will then also have size less than $\lambda$.
  %But it should also hold at $\delta$-strongly compact cardinals: Let $\mathcal A$ and $\mathcal B$ be relational structures in the same language, with a of size less than $\delta$, with less than $\delta$-many relations $(R_i)_{i<\lambda}$. Let $\mathcal C$ be the structure with domain $A\cup B$, in the same language, and with all the relations just being the (disjoint) unions of the relations on $\mathcal A$ and $\mathcal B$. Now define a structure $\mathcal D$ that expands $\mathcal C$ by adding constant symbols for every element of $A$ and for every element of $B$, as well as a function symbol $h$. Let $\Sigma$ be the $\mathcal L_{\delta,\omega}$-theory that states for every finite tuple $\vec c$ from either $A$ or $B$ exactly which elements of $C$ satisfy which relations $R_i$ as a single $L_{\delta,\omega}$ sentence (using the constant symbols), together with the statements expressing that $h\colon\mathcal A\to\mathcal B$ is a homomorphism. Then, by our assumption, $\Sigma$ is ${<}\kappa$-satisfiable. Let $\mathcal G$ with $G=E\cup F$ be a model of $\Sigma$. The problem now is however that $h$ may map things to $F\setminus B$ (where $B$ means the interpretations of constants for elements of $B$ in $\mathcal G$ here). So how do we finish the argument? Probably not this way at all...
\end{proof}

\section{Some Graph Constructions}\label{section:graphconstructions}

Before we can prove our main results, we need to construct a couple of graphs (and lists of allowed colours) that we will then make use of in our arguments. Impatient readers may skip to Section \ref{section:main}, and refer back to the present section when necessary. The graph $\cR_3$ below already appears in \cite{cowen}.

\begin{lemma}\label{lem:prePreStockmeyer}
    For every $2\leq \alpha < \omega$ there exists a finite graph $\cR_\alpha$ with distinguished vertices $x_0,\dots, x_{\alpha-1}$ and $t$ such that
    \begin{enumerate}
        \item[(i)] if $c$ is a $3$-colouring of $\cR_\alpha$ such that all of $x_0,\dots,x_{\alpha-1}$ receive the same colour, then also $t$ has to receive that colour;
        \item[(ii)] for any $a \in \{0,1\}^\alpha$ and $b \in \{a(i) \mid i < \alpha \}$ there is a 3-colouring $c$ of $\cR_\alpha$ such that $c(x_i) = a(i)$ for $i<\alpha$ and $c(t) = b$. 
    \end{enumerate}
\end{lemma}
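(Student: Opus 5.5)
I would prove the lemma by first building a gadget for $\alpha=2$ and then obtaining $\cR_\alpha$ for larger $\alpha$ by chaining copies of $\cR_2$ in a path-like fashion. The colours are $0,1,2$, but condition (ii) only asks for inputs and output in $\{0,1\}$. Condition (i), however, must hold for every common input colour, including $2$. So the gadget should behave like a ``selector''. If both inputs have the same colour, the output is forced to that colour. If the inputs differ, the output can be either input colour.

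\textbf{The gadget $\cR_2$.} I would take vertices $x_0,x_1$, two further vertices $u_0,u_1$, and the output $t$. The edges are $x_0u_0$, $x_1u_1$, $u_0u_1$, $u_0t$ and $u_1t$, so that $u_0,u_1,t$ form a triangle.

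For (i), assume $c(x_0)=c(x_1)=k$. Then $u_0,u_1\neq k$. They are adjacent, so $\{c(u_0),c(u_1)\}$ consists of exactly the two colours different from $k$. Since $t$ is adjacent to both, we get $c(t)=k$.

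For (ii), the case $a(0)=a(1)$ follows by the same reasoning, since the colouring with $c(t)=a(0)$ is realizable. In the case $a(0)=0$, $a(1)=1$ with $b=0$, set $c(t)=0$, $c(u_0)=2$, $c(u_1)=1$. But then $u_1$ is adjacent to $x_1$, which has colour $1$, so this fails. Swap instead: set $c(u_1)=2$ and $c(u_0)=1$. Now $u_0$ is adjacent to $x_0$ (colour $0$), $u_1$ to $x_1$ (colour $1$), and $t$ gets colour $0$, which is proper. For $b=1$, use $c(u_0)=2$, $c(u_1)=0$, $c(t)=1$. The case $a=(1,0)$ is symmetric. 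So this triangle gadget works, and it is essentially Cowen's $\cR_3$ ingredient.

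\textbf{The induction.} Given $\cR_\alpha$ with output $t_\alpha$, I form $\cR_{\alpha+1}$ by taking a fresh copy of $\cR_2$ whose inputs are $t_\alpha$ and a new vertex $x_\alpha$. Its output becomes the new $t$.

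For (i), if all the $x_i$ share colour $k$, then $t_\alpha$ has colour $k$ by induction, and the new gadget then forces $t$ to have colour $k$.

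For (ii), given $a\in\{0,1\}^{\alpha+1}$ and $b$ among its values, I choose the intermediate output $b'\in\{a(i)\mid i<\alpha\}$ as follows. If $b=a(\alpha)$, take $b'$ to be any value in $\{a(i)\mid i<\alpha\}$. Otherwise $b$ occurs among the first $\alpha$ values, and I take $b'=b$. Then $b\in\{b',a(\alpha)\}$, so the $\cR_2$ case supplies a colouring of the new copy. The colourings agree on the shared vertex $t_\alpha$, and since the two parts share only that vertex, they combine into a proper colouring.

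The only real care needed is checking the $\cR_2$ colourings explicitly. The gluing step is routine, because the copies intersect in a single vertex and no edges cross between them.
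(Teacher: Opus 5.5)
Your proof is correct and takes essentially the paper's approach: the same triangle gadget $\cR_2$ acts as a binary ``selector'', and the larger $\cR_\alpha$ are built by chaining copies of it glued along single vertices, with (i) and (ii) checked exactly as you do. The only difference is cosmetic: the paper attaches the new copy of $\cR_2$ at an input of $\cR_\alpha$ (identifying its output with the last input $x'_{\alpha-1}$) rather than at the output, and this yields the same graph up to relabelling of the $x_i$.
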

\begin{proof}
    We construct $\cR_\alpha = \langle R_\alpha,E \rangle$ by induction over $\alpha$. 
    We let $\cR_2$ be the graph with vertex set $\{x_0,x_1,a,b,t\}$ such that the restriction to $\{a,b,t\}$ is complete and with edges between $x_0$ and $a$ and between $x_1$ and $b$, see Figure~\ref{fig:prePreStockmeyer2}. One easily verifies~(i) and~(ii). 

    For $\alpha\geq2$ we construct $\cR_{\alpha + 1}$ as follows. We take one copy of $\cR_{\alpha}$, whose distinguished vertices we call  $x_0,\dots,x_{\alpha-2},x'_{\alpha-1}$ and $t$, and one copy of $\cR_2$, whose distinguished vertices we call $x_{\alpha-1},x_\alpha$ and $t'$, and let $\cR_{\alpha+1}$ be the quotient graph obtained by identifying $x'_{\alpha-1}$ with $t'$. The construction is depicted in Figure~\ref{fig:prePreStockmeyerSucc}. Using that $\cR_\alpha$ and $\cR_2$ satisfy properties~(i) and~(ii) inductively, one easily concludes that $\cR_{\alpha+1}$ does so as well.
\end{proof}

\begin{figure}[h!]
\centering
\begin{minipage}{0.49\textwidth}
    \centering
    \begin{tikzpicture}[vertex/.style={circle, fill, inner sep=2pt}]
     % Nodes
        \node[vertex,label=left:$a$] (a) at (0,0) {};
        \node[vertex,label=right:$b$] (b) at (2,0) {};
        \node[vertex,label=above:$t$] (t) at (1,1.5) {};
        \node[vertex,label=below:$x_0$] (x0) at (0,-1.5) {};
        \node[vertex,label=below:$x_1$] (x1) at (2,-1.5) {};

    % Complete graph on {a,b,t}
        \draw (a) -- (b);
        \draw (a) -- (t);
        \draw (b) -- (t);

    % edges
        \draw (x0) -- (a);
        \draw (x1) -- (b);
    \end{tikzpicture}
    \caption{The graph $\cR_2$.}        \label{fig:prePreStockmeyer2}
\end{minipage}
    \hfill
    \begin{minipage}{0.49\textwidth}
    \centering
    \begin{tikzpicture}[vertex/.style={circle, fill, inner sep=2pt}, blob/.style={draw, dotted, fill=gray!10}]

% --- First blob S_alpha ---
\draw[blob]
(-1.1,-1) -- (1.1,-1) % flat bottom
.. controls (1.1,0.6) and (.5,0.7) .. (0,0.7)
.. controls (-0.5,0.7) and (-1.1,0.6) .. (-1.1,-1)
-- cycle;

\node[vertex,label=above:$t$] (t) at (0,0.7) {};
\node[vertex,label=below:$x_0$] (x_0) at (-1.1,-1) {};
\node[vertex,label=below:$x_1$] (x_1) at (-0.6,-1) {};
\node[vertex](x_alpha-2) at (0.6,-1) {};
\node[vertex] (x_a) at (1.1,-1) {};
\node at (0,-1.2) {$\cdots$};
\node at (0,0.2) {$\cR_\alpha$};

    % Nodes
        \node[vertex] (a) at (0.6,-1.75) {};
        \node[vertex] (b) at (1.6,-1.75) {};
        \node[vertex,label=right:${t'=x'_{\alpha-1}}$] (t') at (1.1,-1) {};
        \node[vertex,label=below:$x_{\alpha-1}$] (xalpha-1) at (0.6,-2.4) {};
        \node[vertex,label=below:$x_\alpha$] (xalpha) at (1.6,-2.4) {};

    % edges
        \draw (a) -- (b);
        \draw (a) -- (t');
        \draw (b) -- (t');
        \draw (xalpha-1) -- (a);
        \draw (xalpha) -- (b);

    \node[fill=white, fill opacity=0.85, inner sep=2pt, below=2pt of x_alpha-2] {$x_{\alpha-2}$};
    \end{tikzpicture}
    \caption{The graph $\cR_{\alpha+1}$.}
    \label{fig:prePreStockmeyerSucc}
\end{minipage}

\end{figure}

\begin{lemma}\label{lem:preStockmeyer}
    Let $\alpha\geq 2$ be an ordinal. There exists a graph $\cS_\alpha = \langle S_\alpha, E \rangle$, which is of size $|\alpha|$ for infinite $\alpha$, and finite otherwise, with distinguished vertices $y_\alpha$ and $x_i$ for $i < \alpha$, and there exists a function $L_\alpha\colon S_\alpha \to \cP(\alpha+1)$ such that  
    \begin{enumerate}
        \item[(i)] if $c$ is an $L_\alpha$-colouring of $\cS_\alpha$ such that $\{x_i \mid i < \alpha \}$ is monochromatic, then $c(y_\alpha) = \alpha$ and furthermore, there are $L_\alpha$-colourings extending both monochromatic colourings of \{$x_i \mid i < \alpha\}$;
        \item[(ii)] for any non-constant $a \in \{0,1\}^\alpha$ and any $b \in \{0,1,\alpha\}$, there is an $L_\alpha$-colouring $c$ of $\cS_\alpha$ such that $c(x_i) = a(i)$ for $i < \alpha$ and $c(y_\alpha) = b$.
    \end{enumerate}
\end{lemma}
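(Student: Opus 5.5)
The plan is to build $\cS_\alpha$ from a \emph{selector} gadget and its mirror image, and then attach $y_\alpha$. The selector is a graph with distinguished vertices $x_i$ ($i<\alpha$) and $t$, where every $x_i$ and $t$ have list $\{0,1\}$. It should satisfy two conditions. First, in every colouring in which the $x_i$ are monochromatic, $t$ receives their common colour. Second, whenever $a\in\{0,1\}^\alpha$ and $b\in\operatorname{ran}(a)$, there is a colouring with $c(x_i)=a(i)$ and $c(t)=b$. Granting such a gadget, I proceed as follows.
\begin{itemize}
  \item Add complement vertices $\bar x_i$ with list $\{0,1\}$, and join $\bar x_i$ to $x_i$, so that $c(\bar x_i)=1-c(x_i)$.
  \item Put one selector on the $x_i$ with output $t$, and a second, vertex-disjoint selector on the $\bar x_i$ with output $\bar t$.
  \item Let $y_\alpha$ have list $\{0,1,\alpha\}$ and be adjacent to both $t$ and $\bar t$.
\end{itemize}

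For (i), suppose the $x_i$ are monochromatic with colour $e$. Then $t=e$ and $\bar t=1-e$, so $y_\alpha\ne 0,1$, that is $c(y_\alpha)=\alpha$. Such colourings exist by the selector's second condition, applied to $a$ constant with value $e$ and to its complement. For (ii), let $a$ be non-constant and fix $b$. If $b\in\{0,1\}$, then $1-b$ lies in the range of both $a$ and $1-a$, so we may choose $t=\bar t=1-b$ and colour $y_\alpha$ with $b$. If $b=\alpha$, any choice of $t,\bar t$ works. Since all the gadgets are glued only along the $x_i$, $\bar x_i$, $t$ and $\bar t$, the colourings of the pieces combine.

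For finite $\alpha\ge 2$, I would take the selector to be $\cR_\alpha$ from Lemma \ref{lem:prePreStockmeyer}. I give its internal vertices the list $\{0,1,\alpha\}$, reading $\alpha$ as the third colour, and give $x_i$ and $t$ the list $\{0,1\}$. Property (i) of that lemma gives the first condition. Property (ii) gives the second condition, and it also yields colourings in which $t$ takes a value in $\{0,1\}$, as the restricted list requires.

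For infinite $\alpha$, $\cR_\alpha$ is unavailable, and the main work is an infinitary selector. Here I exploit the many spare colours in $\alpha\setminus 2$. Choose pairwise distinct ordinals $c_i,c'_i$ ($i<\alpha$) and $d,d'$ in $\alpha\setminus 2$; there are $|\alpha|$ of these, which is available since $\alpha$ is infinite. The gadget forcing ``$t=1$ only if some $x_i=1$'' is built in three steps.
\begin{itemize}
  \item Vertices $v_i$ with list $\{0,c_i\}$, each adjacent to $x_i$. Thus $x_i=0$ forces $v_i=c_i$, while $x_i=1$ allows $v_i=0$.
  \item A vertex $z$ with list $\{c_i\mid i<\alpha\}\cup\{d\}$, adjacent to all the $v_i$. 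So $z$ can take a colour $c_i$ only if $x_i=1$ for some $i$.
  \item A vertex $s$ with list $\{1,d\}$, adjacent to $t$ and to $z$. If $t=1$ then $s=d$, so $z\neq d$, and hence some $x_i=1$. If $t=0$, we may set $s=1$ and $z=d$.
\end{itemize}
The symmetric gadget uses $c'_i$, $d'$, a vertex $s'$ with list $\{0,d'\}$, and $v'_i$ with list $\{1,c'_i\}$; it forces ``$t=0$ only if some $x_i=0$''. The second selector on the $\bar x_i$ needs yet another fresh batch of colours, or alternatively its own disjoint copies with the same colours, since lists only interact along edges.

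It remains to check the selector conditions. If all $x_i=0$, the first gadget excludes $t=1$, so $t=0$; the case where all $x_i=1$ is symmetric. Conversely, given $b\in\operatorname{ran}(a)$, set $t=b$. In the gadget that $t=b$ activates, pick $i$ with $a(i)=b$ and use the free colour at $v_i$ (or $v'_i$) to colour $z$ (or $z'$). The other gadget is satisfied via $d$ or $d'$. The resulting graph has size $|\alpha|$. I expect the main obstacle to be this infinite case: checking carefully that all colours used lie in $\alpha+1$ and are distinct where needed, and that every partial colouring extends consistently.
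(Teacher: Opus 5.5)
Your proposal is correct. For finite $\alpha$ it coincides with the paper's construction: two disjoint copies of $\cR_\alpha$ on complementary inputs, with $y_\alpha$ adjacent to both outputs. (The paper gives the two outputs the list $\{0,1,\alpha\}$ rather than $\{0,1\}$, which changes nothing.)

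For infinite $\alpha$ you take a genuinely different route. The paper argues by transfinite recursion. A non-cardinal $\alpha$ is handled by re-indexing $\cS_{|\alpha|}$ along a bijection $|\alpha|\to\alpha$. For an infinite cardinal $\alpha$, each half glues in copies of all $\cS_\beta$, $2\le\beta<\alpha$, along initial segments of the inputs, and joins all $x_i$ and $y_\beta$ to a hub $t$ with list $\alpha$. The colour $\beta$ at $y_\beta$ then signals that $a\upharpoonright\beta$ is constant. Consequently $t$ is forced to the \emph{opposite} colour when all inputs agree, and for non-constant $a$ it may take a colour $\beta\ge2$ with $a\upharpoonright\beta$ non-constant. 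So in the paper's infinite case each half negates a monochromatic input rather than copying it as your selector does.

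You keep only the two-halves architecture and realize each half directly as an explicit OR-type gadget, spending $|\alpha|$ spare colours from $\alpha\setminus 2$. There is no induction, no appeal to $\cR_\alpha$ or to smaller $\cS_\beta$, and no cardinal/non-cardinal case split, and all verifications are local.

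A pleasant by-product is that your infinite selector on $\delta$ inputs already meets every requirement of Lemma~\ref{lem:Stockmeyer}: lists $\{0,1\}$ at the $x_i$ and at the output, all other lists in $\cP(\delta)$, and size $\delta$. It would therefore make Lemma~\ref{lem:preStockmeyer} unnecessary for infinite $\delta$.

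The paper's approach, in turn, grows everything out of the single finite gadget $\cR_\alpha$, so the infinite graphs are literal transfinite iterates of the classical finite construction. It then reuses the same pattern, a hub adjacent to $x_0$ and to all $y_\beta$, to build $\cT_\delta$.
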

\begin{proof}
    We construct $\cS_\alpha$ and $L_\alpha$ by induction over all ordinals $\alpha \geq 2$. For finite $\alpha \geq 2$, we construct~$\cS_\alpha$ as follows. We take two disjoint copies of $\cR_\alpha$, as obtained by Lemma~\ref{lem:prePreStockmeyer}, and whose distinguished vertices we denote by $x_i$ for $i<\alpha$ and $t$, as well as $x_i'$ for $i<\alpha$ and~$t'$ respectively. For every $i<\alpha$, we add an edge between the vertices $x_i$ and~$x_i'$, and furthermore, we connect both the vertices $t$ and $t'$ with a newly added vertex $y_\alpha$.
    We define $L_\alpha(x_i) = L_\alpha(x_i') = \{0,1\}$ for $i < \alpha$ and let $L_\alpha$ have value $\{0,1,\alpha\}$ on the remaining vertices of $\cS_\alpha$; see Figure~\ref{fig:preStockmeyerFinite} for a graphical representation. Using the properties of $\cR_\alpha$, it is easy to verify that $\cS_\alpha$ satisfies (i) and (ii). 

    \begin{figure}[t]
    \centering
\begin{tikzpicture}[vertex/.style={circle, fill, inner sep=2pt},
blob/.style={draw, dotted, fill=gray!10}
]

% --- First blob S_alpha ---
\draw[blob]
(-1,-1) -- (1,-1) % flat bottom
.. controls (1,0.5) and (0.5,1) .. (0,1)
.. controls (-0.5,1) and (-1,0.5) .. (-1,-1)
-- cycle;

\node[vertex,label=above:$t$] (t) at (0,1) {};
\node[vertex,label=below:$x_0$] (x_0) at (-1,-1) {};
\node[vertex,label=below:$x_1$] (x_1) at (-0.5,-1) {};
\node[vertex,label=below:$x_{\alpha-1}$] (x_a) at (1,-1) {};
\node at (0.1,-1.3) {$\ldots$};
\node at (0,0.2) {$\cR_\alpha$};

% --- Second blob S_alpha ---
\begin{scope}[xshift=4cm]

\draw[blob]
(-1,-1) -- (1,-1) % flat bottom
.. controls (1,0.5) and (0.5,1) .. (0,1)
.. controls (-0.5,1) and (-1,0.5) .. (-1,-1)
-- cycle;

\node[vertex,label=above:$t'$] (t') at (0,1) {};
\node[vertex,label=below:$x_0'$] (x_0') at (-1,-1) {};
\node[vertex,label=below:$x_1'$] (x_1') at (-0.5,-1) {};
\node[vertex,label=below:$x_{\alpha-1}'$] (x_a') at (1,-1) {};
\node at (0.1,-1.4) {$\ldots$};
\node at (0,0.2) {$\cR_\alpha$};

\end{scope}

% --- Top vertex ---
\node[vertex,label=above:$y_{\alpha}$] (y) at (2,2) {};

\draw[] (y) -- (t);
\draw[] (y) -- (t');
\draw[bend left=25] (x_0) to (x_0');
\draw[bend left=25] (x_1) to (x_1');
\draw[bend left=25] (x_a) to (x_a');
\node at (0.7,-0.8) {$\ldots$};

\end{tikzpicture}
    \caption{The graph $\cS_\alpha$ for finite $\alpha \geq 2$.}
    \label{fig:preStockmeyerFinite}
\end{figure}

    Next, we deal with the case when $\alpha$ is not a cardinal. We have already defined $\cS_{|\alpha|}$, with distinguished vertices $y_{|\alpha|}$ and $x_i$ for $i<|\alpha|$, as well as the function $L_{|\alpha|}$. Pick a bijection $f_\alpha$ from $|\alpha|$ to $\alpha$. We let $\cS_\alpha$ be the same graph as $\cS_{|\alpha|}$. We let $y_\alpha$ be the distinguished vertex $y_{|\alpha|}$ of $\cS_{|\alpha|}$, and we replace the distinguished vertex $x_i$ of $\cS_{|\alpha|}$ by $x_{f_\alpha(i)}$ for every $i<|\alpha|$. For $v\in S_\alpha$, we let $L_\alpha(v)\subseteq \alpha +1$ be the same set as $L_{|\alpha|}(v)$, except that we replace $|\alpha|$ by $\alpha$ whenever $|\alpha| \in L_{|\alpha|}(v)$. The desired properties of $\cS_\alpha$ and~$L_\alpha$ follow immediately from the properties of $\cS_{|\alpha|}$ and $L_{|\alpha|}$.

    Finally, let $\alpha$ be an infinite cardinal. For every $2 \leq \beta < \alpha$, we have a graph $\cS_\beta$, whose distinguished vertices we shall call $y^\beta_{\beta}$ and $x^\beta_i$ (rather than just $y_\beta$ and $x_i$) for $i < \beta$, and a function $L_\beta$, meeting conditions (i) and (ii). We construct $\cS_{\alpha}$ as follows. Let~$\cG$ be a graph with vertices $t$, $t'$, $y_\alpha$, and $x_i, x_i'$ for $i < \alpha$, as well as $y_i, y_i'$ for $2\leq i < \alpha$, and edges from $y_\alpha$ to $t$ and to $t'$, as well as edges from $x_i$ to $x_i'$, from $t$ to $x_i$ and $y_i$ and from $t'$ to both $x_i'$ and $y'_i$ for all possible $i < \alpha$. For every $2 \leq \beta < \alpha$, we glue two copies of the graph $\cS_\beta$ into $\cG$ as follows. For the first copy, to which we will simply refer as $\cS_\beta$ in the following, we identify the vertex
    $y_\beta$ of $\cG$ with the vertex $y^\beta_\beta$ of $\cS_\beta$, and for all $i < \beta$, the vertex $x_i$ of $\cG$ with the vertex $x_i^\beta$ of $\cS_\beta$.
    %and we add an edge between $t$ and $y_\beta$.\peter{This edge was already mentioned above.} 
    Similarly, for the second copy, to which we will refer as $\cS_\beta'$ in the following,
    we identify the vertex $y'_\beta$ of $\cG$ with the vertex $y^\beta_\beta$ of $\cS_\beta'$, and for all $i < \beta$, the vertex $x_i'$ of $\cG$ with the vertex $x_i^\beta$ of $\cS_\beta'$.
    %, and add an edge between $t'$ and $y_\beta$.\peter{Same as previous comment.}  
    We thus the graph $\cS_\alpha$ visualized in Figure~\ref{fig:preStockmeyerLimit}.

\begin{figure}
    \centering
\begin{tikzpicture}[vertex/.style={circle, fill, inner sep=2pt},
blob/.style={draw, dotted, fill=gray!10}
]

% --- First blob (filled) ---
\draw[blob]
(0,0) -- (1,0)
.. controls (1,0.75) and (0.75,1) .. (0.5,1)
.. controls (0.25,1) and (0,0.75) .. (0,0)
-- cycle;

% --- Second blob (on top) ---
\draw[blob]
(0,0) -- (2,0)
.. controls (2,1.5) and (1.5,2) .. (1,2)
.. controls (0.5,2) and (0,1.5) .. (0,0)
-- cycle;

% --- third blob (on top) ---
\draw[blob]
(0,0) -- (4,0)
.. controls (4,3) and (3,4) .. (2,4)
.. controls (1,4) and (0,3) .. (0,0)
-- cycle;

% --- Redraw first outline only (so it stays visible) ---
\draw[dotted]
(0,0) -- (1,0)
.. controls (1,0.75) and (0.75,1) .. (0.5,1)
.. controls (0.25,1) and (0,0.75) .. (0,0)
-- cycle;

\draw[dotted]
(0,0) -- (2,0)
.. controls (2,1.5) and (1.5,2) .. (1,2)
.. controls (0.5,2) and (0,1.5) .. (0,0)
-- cycle;

\node at (2.,2.) {$\cdot$};
\node at (2.1,2.1) {$\cdot$};
\node at (2.2,2.2) {$\cdot$};

\node at (3.7,3.7) {$\cdot$};
\node at (3.8,3.8) {$\cdot$};
\node at (3.9,3.9) {$\cdot$};

% Nodes

\node[vertex,label=below:$x_0$] (x0) at (0,0) {};
\node[vertex,label=below:$x_1$] (x1) at (1,0) {};
\node[vertex,label=below:$x_2$] (x2) at (2,0) {};
\node[vertex,label=below:$x_\beta$] (xbeta) at (4,0) {};

\node[vertex,label=above:\small$y_2$] (y2) at (0.5,1) {};
\node[vertex,label=above:\small$y_3$] (y3) at (1,2) {};
\node at (3,-0.2) {$\cdots$};
\node[vertex,label=above:\small$y_{\beta+1}$] (ybeta) at (2,4) {};
\node at (5.2,-0.2) {$\cdots$ \small$\beta\!<\!\alpha$};
\node[vertex,label=above:$t$] (t) at (4.5,4.5) {};

\draw[bend right=15] (x0) to (t);
\draw[bend right=15] (x1) to (t);
\draw[bend right=15] (x2) to (t);
\draw[bend right=5] (xbeta) to (t);
\draw[bend left=10] (y2) to (t);
\draw[bend left=15] (y3) to (t);
\draw[] (ybeta) -- (t);

\node[] at (0.5,0.6) {\small$\cS_2$};
\node[] at (1,1.2) {\small$\cS_3$};
\node[fill=gray!10, fill opacity=0.85, inner sep=1pt] at (2,2.5) {\small$\cS_{\beta+1}$};

% --- Second blob S_alpha ---
\begin{scope}[xshift = 7cm]

\draw[blob]
(0,0) -- (1,0)
.. controls (1,0.75) and (0.75,1) .. (0.5,1)
.. controls (0.25,1) and (0,0.75) .. (0,0)
-- cycle;

% --- Second blob (on top) ---
\draw[blob]
(0,0) -- (2,0)
.. controls (2,1.5) and (1.5,2) .. (1,2)
.. controls (0.5,2) and (0,1.5) .. (0,0)
-- cycle;

% --- third blob (on top) ---
\draw[blob]
(0,0) -- (4,0)
.. controls (4,3) and (3,4) .. (2,4)
.. controls (1,4) and (0,3) .. (0,0)
-- cycle;

% --- Redraw first outline only (so it stays visible) ---
\draw[dotted]
(0,0) -- (1,0)
.. controls (1,0.75) and (0.75,1) .. (0.5,1)
.. controls (0.25,1) and (0,0.75) .. (0,0)
-- cycle;

\draw[dotted]
(0,0) -- (2,0)
.. controls (2,1.5) and (1.5,2) .. (1,2)
.. controls (0.5,2) and (0,1.5) .. (0,0)
-- cycle;

\node at (2.,2.) {$\cdot$};
\node at (2.1,2.1) {$\cdot$};
\node at (2.2,2.2) {$\cdot$};

\node at (3.7,3.7) {$\cdot$};
\node at (3.8,3.8) {$\cdot$};
\node at (3.9,3.9) {$\cdot$};

% Nodes

\node[vertex,label=below:$x_0'$] (x0') at (0,0) {};
\node[vertex,label=below:$x_1'$] (x1') at (1,0) {};
\node[vertex,label=below:$x_2'$] (x2') at (2,0) {};
\node[vertex,label=below:$x_\beta'$] (xbeta') at (4,0) {};

\node[vertex,label=above:\small$y_2'$] (y2') at (0.5,1) {};
\node[vertex,label=above:\small$y_3'$] (y3') at (1,2) {};
\node at (3,-0.2) {$\cdots$};
\node[vertex,label=above:\small$y_{\beta+1}'$] (ybeta') at (2,4) {};
\node at (5.2,-0.2) {$\cdots$ \small$\beta\!<\!\alpha$};
\node[vertex,label=above:$t'$] (t') at (4.5,4.5) {};

\draw[bend right=15] (x0') to (t');
\draw[bend right=15] (x1') to (t');
\draw[bend right=15] (x2') to (t');
\draw[bend right=5] (xbeta') to (t');
\draw[bend left=10] (y2') to (t');
\draw[bend left=15] (y3') to (t');
\draw[] (ybeta') -- (t');

\node[] at (0.5,0.6) {\small$\cS_2'$};
\node[] at (1,1.2) {\small$\cS_3'$};
\node[fill=gray!10, fill opacity=0.85, inner sep=1pt] at (2,2.5) {\small$\cS_{\beta+1}'$};

\end{scope}

--- Top vertex ---

\node[vertex, label=above:$y_\alpha$] (yalpha) at (8,5.75) {};
\draw[] (yalpha) -- (t);
\draw[] (yalpha) -- (t');

\end{tikzpicture}
    \caption{The graph $\cS_\alpha$ for an infinite cardinal $\alpha$. For better readability the edges between $x_i$ and $x_i'$ are omitted.}
    \label{fig:preStockmeyerLimit}
\end{figure}

    The function $L_\alpha \colon S_\alpha \to \cP(\alpha + 1)$ is defined as follows. If $x \in S_\alpha$ is an element of a glued-in copy of $\cS_\beta$ for some $2 \leq \beta < \alpha$, we let $L_\alpha(x) = L_\beta(x)$. Note that this assignment is well-defined, as $x$ being contained in multiple glued-in copies of $\cS_\beta$ implies that it equals $x_i$ or $x_i'$ for some $i < \alpha$, in which case $L_\beta(x)=\{0,1\}$ for all $\beta > i$. Furthermore, we let $L_\alpha(t) = L_\alpha(t') = \alpha$ and $L_\alpha(y_\alpha) = \{0,1,\alpha\}$.
    
    $\cS_\alpha$ and $L_\alpha$ satisfy (i) and~(ii): To see that (i) holds, suppose that $c$ is an $L_\alpha$-colouring of $\cS_\alpha$ such that $\{x_i \mid i < \alpha \}$ is monochromatic, say $c(x_i) = 0$ for all $i<\alpha$. 
    Since for every $2 \leq \beta < \alpha$, $L_\alpha \upharpoonright {\cS_\beta} = L_\beta$, we conclude that $c(y_\beta) = \beta$ by property (i) of~$\cS_\beta$ and $L_\beta$. The edges from $x_i$ to $x_i'$ for all $i < \alpha$ imply that $c(x_i') = 1$ for all $i < \alpha$, so the analogous argument shows that $c(y_\beta') = \beta$ for all $2 \leq \beta < \alpha$.
    Since $t$ has edges to $x_0$ as well as to $y_\beta$ for all $\beta < \alpha$, we have $c(t) = 1$ and similarly $c(t') = 0$. Thus, $c(y_\alpha) = \alpha$, as desired. The above also yields that both monochromatic colourings of $\{x_i \mid i < \alpha \}$ extend to $L_\alpha$-colourings of $\cS_\alpha$.
    
    To see that~(ii) holds, let $a \in \{0,1\}^\alpha$ be non-constant, and let $b \in \{0,1,\alpha\}$. Let $a' \in \{0,1\}^\alpha$ be such that $a(i) =  0 \iff a'(i) = 1$, for all $i < \alpha$.
    Then, since $\alpha$ is a limit ordinal and $a$ is non-constant, there is some $\beta<\alpha$ such that $a \upharpoonright {\beta}$ (and hence also $a'\upharpoonright \beta$) is non-constant. Using our inductive assumption, we find, for every $2 \leq \gamma < \alpha$, $L_\gamma$-colourings~$c_\gamma$ of $\cS_\gamma$ and~$c'_\gamma$ of $\cS'_\gamma$ such that for all $i<\gamma<\alpha$, it holds that $c_\gamma(x_i) = a(i)$, $c_\gamma'(x_i') = a'(i)$ and $c_\gamma(y_\gamma), c_\gamma'(y'_\gamma)<\beta$. We let $c$ be the extension of $\bigcup_{\gamma < \alpha} (c_\gamma\cup c_\gamma')$ to domain $S_\alpha$ with $c(t) = c(t') = \beta$ and $c(y_\alpha) = b$. This is an $L_\alpha$-colouring of $\cS_\alpha$ with %$c(x_i) = a(i)$ for $i<\alpha$ and $c(y_\alpha) = b$
    the desired properties, concluding our induction.
\end{proof}

\begin{lemma}\label{lem:Stockmeyer}
    Let $\delta \geq 3$ be a cardinal. There exists a graph $\cT_\delta = \langle T_\delta,E\rangle$, which is of size $\delta$ for infinite $\delta$, and finite otherwise, with distinguished vertices $y$ and $x_i$ for $i < \delta$, and a map $L'\colon T_\delta \to \cP(\delta)$ with $L'(x_i) = L'(y) = \{0,1\}$ for all $i < \delta$ such that the following holds.
    \begin{enumerate}
        \item[(i)] If $c$ is an $L'$-colouring of $\cT_\delta$ such that $c(x_i) = c(x_j)$ for all $i,j < \delta$, then $c(y) = c(x_0)$.
        \item[(ii)] For every $a \in \{0,1\}^\delta$ and $b \in \{a(i) \mid i < \delta\}$ there is an $L'$-colouring $c\colon T_\delta \to \delta$ such that $c(y) = b$ and $c(x_i) = a(i)$ for all $i < \delta$.  
    \end{enumerate}
\end{lemma}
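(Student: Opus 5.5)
The plan is to obtain $\cT_\delta$ from the graph $\cS_\delta$ of Lemma~\ref{lem:preStockmeyer}, applied with $\alpha=\delta$, after two modifications. First, we rename the colours so that every list is a subset of $\delta$. Second, we attach a small gadget that converts the signal ``$y_\delta$ received the special colour'' into ``$y$ copies the colour of $x_0$''.

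\emph{Renaming colours.} Fix a bijection $\pi\colon\delta+1\to\delta$ with $\pi(0)=0$, $\pi(1)=1$ and $\pi(\delta)=2$. Such a bijection exists because $\delta\ge3$.
\begin{itemize}
\item If $\delta$ is infinite, $\delta+1$ and $\delta\setminus 3$ have the same cardinality, so such a $\pi$ can be chosen.
\item If $\delta$ is finite, the lists $L_\delta$ only use colours from $\{0,1,\delta\}$, so it suffices to send $\delta\mapsto2$.
\end{itemize}
Replace $L_\delta$ by $L''=\pi\circ L_\delta$, meaning $L''(v)=\pi[L_\delta(v)]$. Then $c$ is an $L_\delta$-colouring of $\cS_\delta$ if and only if $\pi\circ c$ is an $L''$-colouring. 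Properties (i) and (ii) of Lemma~\ref{lem:preStockmeyer} therefore transfer verbatim, with the colour $\delta$ replaced by $2$. In particular, the lists of the $x_i$ are $\{0,1\}$, as Lemma~\ref{lem:Stockmeyer} demands.

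\emph{Gadget.} Write $z$ for the distinguished vertex $y_\delta$ of $\cS_\delta$. Add two new vertices $y$ and $q$, together with edges from $q$ to each of $y$, $x_0$ and $z$. Set $L'(y)=\{0,1\}$, $L'(q)=\{0,1,2\}$, and $L'=L''$ elsewhere. The size of the resulting graph $\cT_\delta$ is that of $\cS_\delta$ plus two vertices, so it is $\delta$ for infinite $\delta$ and finite otherwise.

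\emph{Verifying (i).} Suppose $c$ is an $L'$-colouring and the $x_i$ are monochromatic. By (i) of Lemma~\ref{lem:preStockmeyer}, transported via $\pi$, we get $c(z)=2$. Hence $c(q)\in\{0,1\}$, and $c(q)$ must differ from both $c(x_0)\in\{0,1\}$ and $c(y)\in\{0,1\}$. This is only possible if $c(y)=c(x_0)$.

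\emph{Verifying (ii).} There are two cases.
\begin{itemize}
\item If $a$ is constant, then $b=a(0)$. Use the extension provided by (i) of Lemma~\ref{lem:preStockmeyer} (colour $z$ by $2$), and set $c(y)=b$ and $c(q)=1-b$.
\item If $a$ is non-constant, use (ii) of Lemma~\ref{lem:preStockmeyer} to get a colouring with $c(z)=0$. Set $c(q)=2$, which is legal since $z$, $x_0$ and $y$ all have colours in $\{0,1\}$, and set $c(y)=b$ freely.
\end{itemize}

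\emph{Expected obstacle.} The only delicate point is the colour renaming in the infinite case. We must check that $\pi$ really is a bijection $\delta+1\to\delta$ fixing $0$ and $1$, and that it commutes with the notion of $L$-colouring. Since $\pi$ is injective, adjacent vertices still receive distinct colours, so both properties are immediate. Everything else is a direct case check.
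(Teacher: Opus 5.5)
Your proof is correct, but it takes a different route from the paper's.

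\textbf{The paper's construction.} In the infinite case the paper does not use $\cS_\delta$ at all. It takes a single hub vertex $t$ with $L'(t)=\delta$, adjacent to $x_0$, to $y$, and to the top vertices $y_\alpha$ of glued-in copies of $\cS_\alpha$ for every $2\le\alpha<\delta$. This is a one-sided version of the limit step of Lemma~\ref{lem:preStockmeyer}. If the $x_i$ agree, each $y_\alpha$ is forced to colour $\alpha$, so $t$ is pushed into $\{0,1\}\setminus\{c(x_0)\}$ and $y$ copies $x_0$. If $a$ is non-constant, $t$ can take an ordinal colour $\beta\ge2$ that its neighbours avoid, for instance the least $\beta$ with $a\upharpoonright\beta$ non-constant. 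The finite case is read off directly from $\cR_\delta$, taking $y=t$.

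\textbf{How your version differs.} You use Lemma~\ref{lem:preStockmeyer} as a black box at $\alpha=\delta$. Your single vertex $z=y_\delta$ replaces the whole family $(y_\alpha)_{2\le\alpha<\delta}$, and your vertex $q$, with the three-element list $\{0,1,2\}$, plays the role of $t$. What you gain:
\begin{itemize}
\item a uniform treatment of finite and infinite $\delta$;
\item a verification that reduces to a finite case check at $q$.
\end{itemize}
What you pay:
\begin{itemize}
\item a colour renaming, needed because $L_\delta$ uses the colour $\delta\notin\delta$;
\item a larger, two-sided graph.
\end{itemize}
In the paper's construction all lists lie in $\cP(\delta)$ automatically, since $L_\alpha(v)\subseteq\alpha+1\subseteq\delta$ for $\alpha<\delta$.

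\textbf{A small slip.} For finite $\delta$ there is no bijection $\delta+1\to\delta$, so the sentence ``such a bijection exists because $\delta\ge3$'' is false as written. Your second bullet already contains the fix. All you need is a map $\pi$ with values in $\delta$ that:
\begin{itemize}
\item is injective on the set of colours occurring in the lists $L_\delta(v)$, which is $\{0,1,\delta\}$ for finite $\delta$ and $\delta+1$ for infinite $\delta$;
\item fixes $0$ and $1$;
\item sends $\delta$ to $2$.
\end{itemize}
Injectivity on that set is exactly what makes $c\mapsto\pi\circ c$ a bijection between $L_\delta$-colourings of $\cS_\delta$ and $L''$-colourings. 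With that rephrasing, your verification of (i) and (ii) goes through as written.
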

\begin{proof}
    For finite $\delta \geq 3$ this follows immediately from Lemma~\ref{lem:prePreStockmeyer}, so we assume that $\delta$ is infinite.
    We let $\cG$ be the graph with vertices $y, t, x_\alpha$ for $\alpha<\delta$, and $y_\alpha$ for $2 \leq \alpha < \delta$, with edges between $y$ and $t$, between $x_0$ and $t$, and between $y_\alpha$ and $t$ for $2\leq \alpha< \delta$. Similar to the argument of the previous lemma, for every $2 \leq \alpha < \delta$, we glue a copy of~$\cS_\alpha$, whose distinguished vertices we shall call $Y_\alpha$ and $X^\alpha_i$ for $i<\alpha$, into $\cG$, by identifying the vertex $Y_\alpha$ with $y_\alpha$ and $X^\alpha_i$ with $x_i$ for $i<\alpha$. We obtain the graph $\cT_\delta=\langle T_\delta,E \rangle$ depicted in Figure~\ref{fig:Stockmeyer}. We define $L'\colon T_\delta \to \cP(\delta)$ as follows. We require that on every copy of $\cS_\alpha$ for $2 \leq \alpha < \delta$, $L'$ restricts to the function $L_\alpha$ from Lemma~\ref{lem:preStockmeyer}. Furthermore, we define $L'(t) = \delta$ and $L'(y) = \{0,1\}$. Note that this specifies a well-defined map $T_\delta \to \cP(\delta)$, and one easily verifies properties~(i) and~(ii).
\end{proof}

\begin{figure}[t]
    \centering
\begin{tikzpicture}[vertex/.style={circle, fill, inner sep=2pt},
blob/.style={draw, dotted, fill=gray!10}
]

% --- First blob (filled) ---
\draw[blob]
(0,0) -- (1,0)
.. controls (1,0.75) and (0.75,1) .. (0.5,1)
.. controls (0.25,1) and (0,0.75) .. (0,0)
-- cycle;

% --- Second blob (on top) ---
\draw[blob]
(0,0) -- (2,0)
.. controls (2,1.5) and (1.5,2) .. (1,2)
.. controls (0.5,2) and (0,1.5) .. (0,0)
-- cycle;

% --- third blob (on top) ---
\draw[blob]
(0,0) -- (4,0)
.. controls (4,3) and (3,4) .. (2,4)
.. controls (1,4) and (0,3) .. (0,0)
-- cycle;

% --- Redraw first outline only (so it stays visible) ---
\draw[dotted]
(0,0) -- (1,0)
.. controls (1,0.75) and (0.75,1) .. (0.5,1)
.. controls (0.25,1) and (0,0.75) .. (0,0)
-- cycle;

\draw[dotted]
(0,0) -- (2,0)
.. controls (2,1.5) and (1.5,2) .. (1,2)
.. controls (0.5,2) and (0,1.5) .. (0,0)
-- cycle;

\node at (2.,2.) {$\cdot$};
\node at (2.1,2.1) {$\cdot$};
\node at (2.2,2.2) {$\cdot$};

\node at (3.7,3.7) {$\cdot$};
\node at (3.8,3.8) {$\cdot$};
\node at (3.9,3.9) {$\cdot$};

% Nodes

\node[vertex,label=below:$x_0$] (x0) at (0,0) {};
\node[vertex,label=below:$x_1$] (x1) at (1,0) {};
\node[vertex,label=below:$x_2$] (x2) at (2,0) {};
\node[vertex,label=below:$x_\alpha$] (xbeta) at (4,0) {};

\node[vertex,label=left:\small$y_2$] (y2) at (0.5,1) {};
\node[vertex,label=left:\small$y_3$] (y3) at (1,2) {};
\node at (3,-0.2) {$\cdots$};
\node[vertex,label=above:\small$y_{\alpha+1}$] (ybeta) at (2,4) {};
\node at (5.2,-0.2) {$\cdots$ \small$\alpha\!<\!\delta$};
\node[vertex,label=left:$t$] (t) at (0,4.5) {};
\node[vertex,label=left:$y$] (y) at (0,5.5) {};

\draw[bend left = 10] (x0) to (t);
\draw[bend left = 5] (y2) to (t);
\draw[bend left = 5] (y3) to (t);
\draw[] (ybeta) -- (t);
\draw[] (t) -- (y);

\node[] at (0.5,0.5) {\small$\cS_2$};
\node[] at (1,1.5) {\small$\cS_3$};
\node[fill=gray!10, fill opacity=0.85, inner sep=1pt] at (2,3) {\small$\cS_{\alpha+1}$};

\end{tikzpicture}
    \caption{The graph $\cT_\delta$ for a cardinal $\delta$.}
    \label{fig:Stockmeyer}
\end{figure}

\section{Retrieving large cardinals}\label{section:main}

In this section, we provide reversals to Propositions \ref{proposition:extensiontocompactness2} and \ref{proposition:extensiontocompactness}, which, together with Proposition \ref{proposition:filterextensionchar}, establish Theorem \ref{th:main}. Some of the basic ideas for this argument stem from the presentation of L\"auchli's theorem in \cite{cowen}.

\begin{theorem}\label{th:compactnesstofilters}
  Assume that $\delta\le\kappa\le\lambda$ are infinite cardinals, with $\delta$ regular. Then:
  \begin{itemize}
      \item If $\delta<\kappa$, then $\kappa$-compactness for $\delta$-list-colouring for graphs of size at most $\lambda^\delta$ implies that $\kappa$ has the $(\delta^+,\lambda)$-filter extension property.
      \item If $\delta\le\kappa$, then $\kappa$-compactness for ${<}\delta$-list-colouring for graphs of size at most $\lambda^{<\delta}$ implies that $\kappa$ has the $(\delta,\lambda)$-filter extension property.
  \end{itemize}  
\end{theorem}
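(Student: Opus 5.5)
The plan is to encode the filter extension problem directly as a list colouring problem. Vertices will represent members of $E$, colour $1$ will mean ``belongs to the extension'', and the gadgets $\cT_\gamma$ of Lemma~\ref{lem:Stockmeyer} will forbid a family with empty intersection from being entirely coloured $1$.

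\textbf{Construction.} Fix $X$, $E\subseteq\cP(X)$ of size at most $\lambda$ (closed under complements, $X\in E$), and a ${<}\kappa$-complete filter $F\subseteq E$.
\begin{itemize}
  \item For every $P\in E$ take a vertex $v_P$, with $L(v_P)=\{1\}$ if $P\in F$ and $L(v_P)=\{0,1\}$ otherwise.
  \item Add an edge between $v_P$ and $v_{X\setminus P}$.
  \item In the first case, consider every sequence $\vec P=(P_i)_{i<\delta}$ from $E$ with $\bigcap_i P_i=\emptyset$; families of size $\le\delta$ are handled by padding with repetitions. Glue in a fresh copy of $\cT_\delta$ with lists $L'$, identifying its distinguished vertex $x_i$ with $v_{P_i}$; this is consistent, since $L'(x_i)=\{0,1\}$, and if some $P_i\in F$ the resulting list $\{1\}$ only makes the gadget more restrictive. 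Then add a new vertex $z_{\vec P}$ with list $\{1\}$, adjacent to the gadget's vertex $y$.
  \item In the second case, do the same for every sequence $(P_i)_{i<\gamma}$ with $\gamma<\delta$ and empty intersection, using $\cT_{\gamma}$ for $3\le\gamma<\delta$ and padding shorter families.
\end{itemize}

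\textbf{Lists and size.} All lists in $\cT_\gamma$ are subsets of $\gamma+1$ or equal to $\gamma$. In the first case they therefore lie in $\cP(\delta)$, and in the second case in $[\delta]^{<\delta}$. The graph has size $\lambda^\delta\cdot\delta=\lambda^\delta$, respectively $\lambda^{<\delta}$, because each gadget has size at most $\delta$.

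\textbf{Small subgraphs are colourable.} Let $\cH$ be a subgraph of size ${<}\kappa$. Only fewer than $\kappa$ vertices $v_P$ with $P\in F$ lie in $\cH$. By ${<}\kappa$-completeness of $F$, pick a point $p\in X$ lying in all of these $P$. Colour every $v_P$ by $1$ if $p\in P$ and by $0$ otherwise.
\begin{itemize}
  \item The lists $\{1\}$ inside $\cH$ are respected by the choice of $p$.
  \item Complementary vertices receive different colours.
  \item For each gadget indexed by $\vec P$, the intersection $\bigcap P_i$ is empty, so $p\notin P_i$ for some $i$. Hence the tuple $a(i)=[p\in P_i]$ takes the value $0$. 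Lemma~\ref{lem:Stockmeyer}(ii) then gives an $L'$-colouring of the gadget with these values on the $x_i$ and with $c(y)=0$, and we give $z_{\vec P}$ colour $1$.
\end{itemize}
Gadgets lying only partially in $\cH$ are coloured by restricting such full colourings. So the whole graph carries an $L$-colouring, apart from the list constraints $\{1\}$ at vertices outside $\cH$, and its restriction to $\cH$ is an $L$-colouring of $\cH$. By the assumed compactness, the whole graph then has an $L$-colouring $c$.

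\textbf{Extracting the filter.} Let $U=\{P\in E\mid c(v_P)=1\}$.
\begin{itemize}
  \item $F\subseteq U$ by the lists.
  \item $U$ measures $E$ because of the complement edges.
  \item For a gadget, $z_{\vec P}$ has colour $1$, so $y$ has colour $0$. By Lemma~\ref{lem:Stockmeyer}(i), not all $P_i$ can then be in $U$. Thus every family in $U$ of size $\le\delta$ (respectively ${<}\delta$) has nonempty intersection. This is ${<}\delta^+$-, respectively ${<}\delta$-completeness.
  \item Upward closure follows by applying completeness to two-element families. If $P\in U$, $P\subseteq R\in E$ and $R\notin U$, then $X\setminus R\in U$ and $P\cap(X\setminus R)=\emptyset$, a contradiction.
  \item Taking the family $\{\emptyset\}$ shows $\emptyset\notin U$, hence $X\in U$.
\end{itemize}

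\textbf{Main obstacle.} The delicate step is the verification for small subgraphs. One has to choose a single point $p$ that simultaneously respects the $\{1\}$-lists inside $\cH$ and yields, for every gadget, a non-constant input or a constant-$0$ input, so that Lemma~\ref{lem:Stockmeyer}(ii) applies with $y$ coloured $0$. One also has to check carefully that identifying gadget vertices with the $v_P$ does not create conflicting lists or extra edges. The hypothesis $\delta<\kappa$ in the first case seems to be needed only to make the statement well-posed, since $\kappa$-compactness for $\delta$-list-colouring is defined for $\delta<\kappa$.
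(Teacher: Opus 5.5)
Your proof is correct. It shares the paper's skeleton: a vertex per set with list $\{1\}$ on $F$ and $\{0,1\}$ elsewhere, complement edges, glued copies of $\cT_\delta$ (resp.\ $\cT_{\bar\delta}$), and a single point lying in all $F$-sets that occur in $\cH$ to colour small subgraphs. Where you differ is in how the gadget is used.

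The paper first enlarges the given family to one closed under complements and $\delta$-intersections (resp.\ ${<}\delta$-intersections). It then glues a copy of $\cT_\delta$ onto \emph{every} $\delta$-family and identifies the output vertex $y$ with the vertex $\bigcap_{i<\delta}X_i$. Lemma~\ref{lem:Stockmeyer}(i) then makes the colour-$1$ class closed under these intersections, and upward closure is derived via $c(\emptyset)=1\Rightarrow c(J)=0$. In the small-subgraph step, part (ii) applies because the colour of $\bigcap_i X_i$ (namely whether $a$ lies in it) automatically belongs to $\{c(X_i)\mid i<\delta\}$.

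You instead leave $E$ untouched and put gadgets only on families with empty intersection. A pendant vertex with list $\{1\}$ pins $y$ to colour $0$, so each gadget is a pure prohibition against colouring such a family entirely by $1$. What your route buys:
\begin{itemize}
\item It encodes exactly the nonempty-intersection notion of completeness in the paper's definition, with no closure step and no need for $\bigcap_i X_i$ to be a vertex.
\item Your small-subgraph step is slightly cleaner: you colour the whole graph from $p$, violating only $\{1\}$-lists outside $\cH$, and then restrict. The paper must first enlarge $\cH$ so that every gadget meeting it lies inside it, which needs a cardinality check.
\end{itemize}
What the paper's route buys: without extra pendant vertices, it produces a filter genuinely closed under ${\le}\delta$-intersections on the enlarged algebra. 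Both graphs have size $\lambda^\delta$ (resp.\ $\lambda^{<\delta}$).

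Some small remarks.
\begin{itemize}
\item Padding by repetitions identifies several distinguished $x_i$ with one vertex $v_P$. This is harmless only because the $x_i$ are pairwise non-adjacent in $\cT_\gamma$, which you should state; the paper relies on the same fact.
\item In the second case, index the gadgets by cardinals $\gamma$, using $\cT_{|\gamma|}$ after reindexing for infinite ordinal lengths.
\item Your comment on $\delta<\kappa$ puts it on the wrong side. The paper's definition of $\kappa$-compactness for $\delta$-list-colouring imposes no condition $\delta<\kappa$. It is the conclusion, the $(\delta^+,\lambda)$-filter extension property, that is defined only when $\delta^+\le\kappa$. For $\delta=\kappa$ the hypothesis fails anyway, as the complete graph on $\kappa^+$ shows.
\end{itemize}
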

\begin{proof}
    Assume first that $\kappa$-compactness for $\delta$-list-colouring holds for graphs of size at most $\lambda^\delta$. We want to show that $\kappa$ has the $(\delta^+,\lambda)$-filter extension property.
    Given a set $J$, and $I\subseteq\cP(J)$ of size at most $\lambda$, we may enlarge $I$ to size at most~$\lambda^\delta$, so that $I$ is closed under $\delta$-intersections and complements, and $J\in I$. Let $F\subseteq I$ be a ${<}\kappa$-complete filter. We need to extend $F$ to a ${<}\delta^+$-complete filter $F'\supseteq F$ that measures $I$.  We construct a graph $\cG=\langle G,E\rangle$ with $G \supseteq I$, and a map $L\colon G \to \cP(\delta)$, with the property that every $L$-colouring~$c$ of~$\cG$ corresponds to a ${<}\delta^+$-complete filter that extends $F$ and measures $I$, and which also has the property that $L$-colourings of subgraphs $\cH=\langle H,E\cap H^2\rangle$ of $\cG$ correspond to ${<}\delta^+$-complete filters extending~$F$ and measuring all elements of $I$ in $H$.
    %every subgraph $\cH$ of $\cG$ is contained in a subgraph $\cH' \subseteq \cG$  of size $|H| + \delta$ such that the $L$-colourings of $\cH'$ correspond to the ${<}\delta^+$-complete filters $U\supseteq F\cap H'$ on $I$ that measure all elements of $H'\cap\mathcal P(I)$. In particular, if $H=G$, then $U$ is a ${<}\delta^+$-complete ultrafilter on $I$.
    Let $\cG' = \langle I,E'\rangle$ be the graph with edges $E'=\{(X,J\setminus X)\mid X\in I\}$. We construct $\cG=\langle G,E\rangle$ with $G\supseteq I$ and $E\supseteq E'$ from $\cG'$ by gluing in a copy of $\cT_\delta$ for every family $(X_i)_{i < \delta}$ of elements of $I$, identifying the vertex $x_i$ of $\cT_\delta$ with the vertex $X_i$ of $\cG'$ for all $i<\delta$, and identifying~$y$ with $\bigcap_{i<\delta}X_i\in I$. %\footnote{More formally, we take the disjoint union of $\cG'$ and a copy of $\cT_\delta$ for every family $(X_i)_{i < \delta}$ of elements of $I$, and then take the quotient graph identifying the vertex $x_i$ in the copy of $\cT_\delta$ corresponding to the family $(X_i)_{i < \delta}$ with the vertex $X_i$ of $\cG'$ for all $i < \delta$, as well as identifying $y$ with $\bigcap_{i < \delta} X_i$ to obtain $\cG$.} 
    Note that $G$ has size at most $\lambda^\delta$. We define the function $L\colon G \to \cP(\delta)$ as follows. For $X \in F$, we set $L(X) = \{1\}$, for $X \in I\setminus F$ we set $L(X) = \{0,1\}$ and for $g \in G \setminus I$ we set $L(g)=L'(g)$, where $L'$ refers to the map $T_\delta \to \cP(\delta)$ defined for the respective copy of $\cT_\delta$ from Lemma~\ref{lem:Stockmeyer} that $g$ belongs to.
    Observe that for every $L$-colouring $c$ of $\cG$, the set
    \begin{equation*}
        F' = \{ X \in I \mid c(X) = 1\}
    \end{equation*} 
    is a ${<}\delta^+$-complete filter that extends $F$ and measures $I$: $F'$ extends $F$ by our demand that $L(X)=\{1\}$ whenever $X\in F$, and for every $X\in I$, precisely one of $X$ and $J \setminus X$ is an element of $F'$, for they are connected by an edge in $\cG$. Furthermore, if $(X_i)_{i < \delta}$ is a family of elements of $F'$, i.e., $c(X_i)=1$ for all $i < \delta$, then Property (i) from Lemma~\ref{lem:Stockmeyer} for the suitable copy of $\cT_\delta$ contained in $\cG$ yields $c(\bigcap_{i<\delta}X_i) =1$, so $\bigcap_{i<\delta}X_i \in F'$. Finally, if $X\subseteq Y$ are both in $I$ and $X\in F'$, i.e., $c(X)=1$, then if $c(Y)=0$, it follows by the above that $c(J\setminus Y)=1$, and thus again by the above that $c(X\cap(J\setminus Y))=c(\emptyset)=1$, and hence that $c(J)=0$, contradicting that $J\in F$.

    \medskip

    Thus, it remains to show that there exists an $L$-colouring of $\cG$, which by the $\kappa$-compactness for $\delta$-list-colouring for graphs of size at most $\lambda^\delta$ is equivalent to showing that every subgraph $\cH \subseteq \cG$ of size ${<}\kappa$ has an $L$-colouring. Let $\cH=\langle H,D\rangle$ be a subgraph of $\cG$ of size less than~$\kappa$. We may assume that whenever $h\in H$ is an element of some copy of $\cT_\delta$ in $\cG$ with $h\not\in I$, this copy of $\cT_\delta$ (including its nodes from $I$) is completely contained in $\cH$. The ${<}\kappa$-completeness of $F$ implies that the set $\bigcap (H \cap F)\subseteq J$ is non-empty, so it contains some $a \in J$ as an element. We define an $L$-colouring $c\colon H \to \delta$ as follows. For $X \in H \cap I$, we set $c(X) = 1$ if $a \in X$, and $c(X) = 0$ otherwise. 
    The remaining vertices of $H$ are contained in (unique) copies of $\cT_\delta$, and we define the function $c$ on those vertices for each copy of $\cT_\delta$ as follows. %We distinguish two cases. In the first case, we consider a copy of $\cT_\delta$ such that all of its distinguished vertices $x_i$ for $i<\delta$, and $y$ (which were identified with sets $X_i \in I$, $i<\delta$ and $\bigcap_{i < \delta} X_i$ respectively), are contained in $H$. 
    Let us consider a particular copy of $\cT_\delta$ in $\cH$. Observe that $c(y)$ and $c(x_i)$ for $i<\delta$ have been defined already, and that $c(y) \in \{ c(x_i) \mid i < \delta \}$. Property (ii) from Lemma~\ref{lem:Stockmeyer} %, in combination with the fact that on the copy of $\cT_\delta$ in question, $L$ agrees with the function $L'$ of Lemma~\ref{lem:Stockmeyer}, 
    shows that we find an $L$-colouring of the whole copy of $\cT_\delta$ that extends $c$ as already defined on $H \cap I$. %In the second case, we consider a copy of $\cT_\delta$ such that some of its distinguished vertices are not contained in $H$. It is then easy to see that property~(2) of Lemma~\ref{lem:Stockmeyer} implies that the whole copy of $\cT_\delta$ has an $L$-colouring extending $c$ as defined on $H \cap \cP(I)$, which we may again restrict to the vertices contained in $H$. 
    Since different copies of $\cT_\delta$ in $\cH$ are not connected on nodes outside of $I$, $c$ is an $L$-colouring of~$\cH$.

    \medskip

    Now let us consider the case when $\kappa$-compactness for ${<}\delta$-list-colouring holds for graphs of size at most $\lambda^{<\delta}$. We now want to show that $\kappa$ has the $(\delta,\lambda)$-filter extension property. The argument is basically the same as in the previous case, but now we enlarge a given set $I\subseteq\cP(J)$ of size at most $\lambda$ to size at most $\lambda^{<\delta}$, with $J\in I$, and $I$ closed under ${<}\delta$-intersections and complements. We then construct a graph $\cG$ such that every $L$-colouring of $\cG$ corresponds to a ${<}\delta$-complete filter on $J$ that extends a given ${<}\kappa$-complete filter $F\subseteq I$ on $J$ that measures $I$. 
    The graph is constructed as before, but we now consider families $(X_i)_{i<\bar\delta}$ of elements of $I$ for cardinals $3 \leq \bar\delta<\delta$, and glue a copy of $\cT_{\bar\delta}$ into $\cG$ for every such family. We also define $L$ as before, and note that $L\colon G\to[\delta]^{<\delta}$, since this is the case for every map $L'$ that is associated to $\cT_{\bar\delta}$ for $3 \leq \bar\delta<\delta$. We then define $F'$ as in the previous case, and the same argument, using the $\cT_{\bar\delta}$'s rather than $\cT_\delta$, shows that $F'\supseteq F$ is a ${<}\delta$-complete filter that measures $I$. Finally, the argument that $\cG$ has an $L$-colouring proceeds exactly as in the previous case.
\end{proof}

We also obtain an analogous characterization of certain compact cardinals in terms of homomorphism-compactness:

\begin{theorem}\label{th:main2}
  Assume that $\delta<\kappa\le\lambda$ are infinite cardinals, with $\delta$ regular. Then:
  \begin{itemize}
      \item If $\lambda^\delta=\lambda$, then $\kappa$ is $(\delta^+,\lambda)$-compact if and only if $(\kappa,\lambda)$-homomorphism-compactness holds for target structures of size at most $\delta$.
    \item If $\bar\lambda^\delta<\lambda$ whenever $\bar\lambda<\lambda$, then $\kappa$ is $(\delta^+,{<}\lambda)$-compact if and only if $(\kappa,{<}\lambda)$-homomorphism-compactness holds for target structures of size at most $\delta$.
    \item $\kappa$ is $\delta^+$-strongly compact if and only if $\kappa$-homo\-morphism-compact\-ness holds for target structures of size at most~$\delta$.
  \end{itemize}
\end{theorem}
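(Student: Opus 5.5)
Each bullet is an equivalence, and I prove the two directions separately, using Proposition~\ref{proposition:filterextensionchar}, Proposition~\ref{proposition:extensiontocompactness} and Theorem~\ref{th:compactnesstofilters}.

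\emph{Forward direction.} Assume $\kappa$ is $(\delta^+,\lambda)$-compact. By the first part of Proposition~\ref{proposition:filterextensionchar}, $\kappa$ has the $(\delta^+,\lambda)$-filter extension property. A target structure of size at most $\delta$ has domain of size less than $\delta^+$. So Proposition~\ref{proposition:extensiontocompactness}, applied with $\delta^+$ in place of $\delta$, yields $(\kappa,\lambda)$-homomorphism-compactness for it. Note that $\delta^+\le\kappa$ because $\delta<\kappa$.

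In the ${<}\lambda$ version, $(\delta^+,{<}\lambda)$-compactness gives the $(\delta^+,\bar\lambda)$-filter extension property for every $\bar\lambda<\lambda$. This is exactly the $(\delta^+,{<}\lambda)$-filter extension property, and the same proposition applies. The third bullet is the case $\lambda=\Ord$.

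\emph{Reverse direction.} Assume $(\kappa,\lambda)$-homomorphism-compactness for all targets of size at most $\delta$. The key observation is that $\kappa$-compactness for $\delta$-list-colouring, for graphs of size at most $\lambda$, is a special case. By the observation following Definition~\ref{def:kappahomcom}, it is homomorphism-compactness for the structure $\mathcal B$: the complete graph $K_\delta$ on domain $\delta$ expanded by unary predicates $c_x$ for $x\subseteq\delta$. This $\mathcal B$ has size $\delta$, and its translation preserves the domain of the source graph, so the size bound $\lambda$ is respected.

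It is also a relational structure, as the definition requires. The language has $2^\delta$ predicates, which is harmless because the hypothesis is only about the size of the target's domain. One could alternatively restrict to the predicates actually used.

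We have $\lambda^\delta=\lambda$ in the first bullet. Hence Theorem~\ref{th:compactnesstofilters} (first item, graphs of size at most $\lambda^\delta=\lambda$) gives the $(\delta^+,\lambda)$-filter extension property. Since $\lambda^{<\delta^+}=\lambda^\delta=\lambda$, the second part of Proposition~\ref{proposition:filterextensionchar}, with $\delta^+$ in place of $\delta$, gives that $\kappa$ is $(\delta^+,\lambda)$-compact.

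\emph{Main obstacle: the ${<}\lambda$ bullet.} Fix $\bar\lambda<\lambda$. Set $\mu=\bar\lambda^\delta$, so $\mu<\lambda$ and $\mu^\delta=\mu$. Homomorphism-compactness for structures of size less than $\lambda$ covers graphs of size at most $\mu$. Applying the above argument with $\mu$ yields $(\delta^+,\mu)$-compactness, hence $(\delta^+,\bar\lambda)$-compactness.

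To use Theorem~\ref{th:compactnesstofilters} we need $\delta\le\kappa\le\mu$. If $\mu<\kappa$, first enlarge: when $\bar\lambda<\kappa$, replace $\bar\lambda$ by $\kappa$. This is allowed since $\kappa<\lambda$ should hold in the interesting case. If $\lambda=\kappa$ the statement is vacuous or trivial, since $(\delta^+,\bar\lambda)$-compactness only needs $\kappa\le\bar\lambda$ as a definitional constraint.

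For the third bullet, $\lambda=\Ord$ satisfies the closure hypothesis, since $\bar\lambda^\delta$ is always a cardinal. So $\kappa$ is $\delta^+$-strongly compact exactly when $\kappa$-homomorphism-compactness holds for targets of size at most $\delta$.
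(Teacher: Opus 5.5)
Your proposal is correct and follows the paper's route. The paper's proof is the one-line combination of Proposition~\ref{proposition:extensiontocompactness}, Proposition~\ref{proposition:filterextensionchar} and Theorem~\ref{th:compactnesstofilters}. You additionally make explicit two steps the paper leaves implicit: the bridge via the observation that $\delta$-list-colouring is homomorphism-compactness for the expanded $K_\delta$ on domain $\delta$, and the cardinal arithmetic $\mu=\bar\lambda^\delta$, $\mu^\delta=\mu$ for the ${<}\lambda$ case.
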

\begin{proof}
  Immediate by Propositions \ref{proposition:extensiontocompactness}, \ref{proposition:filterextensionchar} and Theorem \ref{th:compactnesstofilters}.
\end{proof}

\section{On the largeness of compact cardinals}\label{section:compact}

The goal of this section is to verify Theorem \ref{theorem:compactlarge}, which states that for any cardinals $\delta\le\kappa$ with $\delta$ regular and uncountable, if $\kappa$ is $(\delta,\kappa)$-compact, then $\kappa\ge\aleph_\delta$.

\begin{proof}[Proof of Theorem~\ref{theorem:compactlarge}]
  Assume for a contradiction that $\kappa$ is a cardinal in $[\delta,\aleph_\delta)$ that is $(\delta,\kappa)$-compact. Let $\mathcal L=\{\alpha_i\mid i\leq \kappa \}\cup\{E,\in,f\}$, 
  with constant symbols $\alpha_i$, a unary predicate $E$ and binary relations $\in$ and $f$.
  For every limit cardinal $\lambda\le\kappa$, we fix a sequence of cardinals $\langle \lambda_i \mid i < \cof(\lambda) \rangle$ below $\lambda$ that is cofinal in $\lambda$.
  %In case $\lambda = \omega$ we choose $\langle i \mid i < \omega \rangle$.
  Let $T$ be the theory consisting of the following:
    \begin{enumerate}
        \item\label{one} for every $i\le\kappa$, $E(\alpha_i)$,
      \item \emph{$E$ is transitive:} $\forall x,y\ [(x\in y\,\land\,E(y))\to E(x)]$,
      \item\label{lo} \emph{$\in$ is a strict linear ordering of all elements satisfying $E$}, that is:
      \begin{itemize}
        \item $\forall x,y,z\ E(z)\to[(x\in y\,\land\,y\in z)\to x\in z]$,
        \item $\forall x\ne y\ [(E(x)\,\land\,E(y))\to(x\in y\,\lor\,y\in x)]$,
        \item $\forall x\ E(x)\to\lnot(x\in x)$,
      \end{itemize}
      \item\label{normal} for all $i<j\le\kappa$, $\alpha_i \in \alpha_j$,
      \item\label{finite} for $m<\omega$, \[\forall x\ \left(x\in\alpha_m\iff\bigvee_{k<m}\ x=\alpha_k\right),\]
      \item\label{limit} for every limit cardinal $\lambda\le\kappa$, \[\forall x\ \left(x \in \alpha_\lambda \iff \bigvee_{i < \cof(\lambda)} x \in \alpha_{\lambda_i}\right),\] 
      \item\label{small} for every cardinal $\lambda\le\kappa$, the sentence that for all $x\in\alpha_\lambda$, $x$ is a surjective image of $\alpha_{\bar\lambda}$ for some cardinal $\bar\lambda < \lambda$,\footnote{Note that since $\lambda<\aleph_\delta$, there are less than $\delta$-many cardinals below $\lambda$, so this can be formulated as an $\mathcal L_{\delta,\omega}$-sentence.} where $z$ being a surjective image of $\alpha_{\bar\lambda}$ means that there is $g$ consisting (via $\in$) of ordered pairs $(x,y)$ with $x\in\alpha_{\bar\lambda}$ and $y\in z$, such that
     \begin{itemize}
       \item $\forall x\in\alpha_{\bar\lambda}\,\exists!y\in z\ (x,y)\in g$, and
      \item $\forall y\, [y\in z\to\exists x\in\alpha_{\bar\lambda}\ (x,y)\in g]$. 
     \end{itemize}
      \item\label{fbasic} $f$ is a function, and its image of $\alpha_\kappa$ contains $\{\alpha_i\mid i\le\kappa\}$, or more precisely:
      \begin{itemize}
        \item $\forall x\exists!y f(x,y)$,
        \item for every $i\le\kappa$, $\exists x\in\alpha_\kappa\ f(x,\alpha_i)$.
      \end{itemize}
      We will write $f(x)=y$ rather than $f(x,y)$ in the following.
      \item\label{fpreserve} $f$ is weakly order-preserving with respect to $\in$, that is if $x\in y \in \alpha_\kappa$, then either $f(x)\in f(y)$ or $f(x)=f(y)$.
    \end{enumerate}
    
  % \begin{enumerate}
  %     \item\label{one} for every $i<\kappa$, $E(\alpha_i)$,
  %     \item \emph{$E$ is transitive:} $\forall x,y\ [(x\in y\,\land\,E(y))\to E(x)]$,
  %     \item\label{lo} \emph{$\in$ is a strict linear ordering of all elements satisfying $E$}, that is:
  %     \begin{itemize}
  %       \item $\forall x,y,z\ E(z)\to[(x\in y\,\land\,y\in z)\to x\in z]$,
  %       \item $\forall x\ne y\ [(E(x)\,\land\,E(y))\to(x\in y\,\lor\,y\in x)]$,
  %       \item $\forall x\ E(x)\to\lnot(x\in x)$,
  %     \end{itemize}
  %     
  %     %\item\label{omega} \emph{$\alpha_\omega$ resembles $\omega$, in the sense that}\[\forall x\ \left(x\in\alpha_\omega\iff\bigvee_{n<\omega}x=\alpha_n\right),\]
  %     \item \todo[inline]{What do we do if $\bar\kappa$ is a limit ordinal?}
  %     \item If $\lambda<\bar\kappa$ is a limit ordinal, we pick a cofinal sequence $\langle\lambda_n\mid n<\omega\rangle$ for $\lambda$, and ask that $\alpha_\lambda$ resembles $\lambda$, in the sense that \[\forall x\ \left(x\in\alpha_\lambda\iff\bigvee_{n<\omega}x\in\alpha_{\lambda_n}\right),\]
  %     \item\label{normal} for every $i<j\le\bar\kappa$, $\alpha_i\in\alpha_j$,
  %     \item\label{step} for every cardinal $\bar{\bar\kappa}\le\bar\kappa$, every element of $\alpha_{\bar{\bar\kappa}}$ is a surjective image of $\alpha_j$ for some $j<\bar{\bar\kappa}$, and
  %     \item\label{small} every $\alpha_i$ is a surjective image of $\alpha_j$ for some $j<\bar\kappa$,
  % \end{enumerate}

  $T$ is an $\mathcal L_{\delta,\omega}$-theory in a language of size $\kappa$.
  
  \begin{claim}
    $T$ is ${<}\kappa$-satisfiable.
  \end{claim}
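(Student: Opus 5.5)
Fix a subset $T_0\subseteq T$ with $|T_0|<\kappa$. Let $C\subseteq\kappa+1$ be the set of indices $i$ such that $\alpha_i$ occurs in some sentence of $T_0$; then $|C|<\kappa$. The plan is to build a model inside the real universe. We interpret $E$ as the class of ordinals (or a large enough ordinal), $\in$ as true membership, and $\alpha_i$ as the true ordinal $i$ for every $i$ not of the form where a problem arises. The difficulty is the requirement in (\ref{fbasic}) that $f$ maps $\alpha_\kappa$ onto a set containing all the $\alpha_i$, which is impossible in the intended model with $\alpha_\kappa=\kappa$. Only those $i\in C$ matter, however, and we may reinterpret the constants so that $\alpha_\kappa$ is large enough. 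So I will interpret each $\alpha_i$ as an ordinal $\beta_i$ chosen so that every sentence of types (\ref{one})--(\ref{small}) appearing in $T_0$ holds, and at the same time $\beta_\kappa$ has cardinality at least $|C|$.

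Concretely, I will use the closure of $C$: since $|C|<\kappa<\aleph_\delta$, we have $|C|<\kappa$. If $\kappa$ is a successor cardinal $\mu^+$, then $|C|\le\mu$, and we can collapse the indices in $C$ into ordinals below $\mu$, say via an order-preserving map $\pi$. This map must send finite $m$ to $m$, send indices that are limit cardinals $\lambda$ (with the relevant $\lambda_i$ in $C$) to suprema compatible with (\ref{limit}), and send cardinal indices to cardinals so that (\ref{small}) still holds. The cleaner approach is the following. Choose a cardinal $\theta$ with $|C|\le\theta$ and an order-preserving, cardinal-preserving-enough embedding, or simply take a sufficiently large model: interpret $\alpha_i$ as $\aleph_{g(i)}$-like ordinals, using the fact that the cardinal structure below $\kappa$ is an ordinal of size $<\delta$.

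The alternative, which I expect is what actually works, is to keep $\alpha_i=i$ for all $i<\kappa$ but to interpret $\alpha_\kappa$ (and, if needed, the top block of indices in $C$) as larger ordinals. Since $T_0$ mentions fewer than $\kappa$ of the $\alpha_i$, we can take $\beta_\kappa$ to be a cardinal below $\kappa$ exceeding $|C|$ whenever $\kappa$ is a limit cardinal. In that case (\ref{limit}) for $\lambda=\kappa$ only involves fewer than $\kappa$ of the $\lambda_i$ inside $T_0$, but it is a single sentence mentioning all of them, and this is the real obstacle.

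So the main step is to handle the single sentences (\ref{limit}) and (\ref{small}) for $\lambda=\kappa$, which mention $\operatorname{cof}(\kappa)$ many constants at once. Since $|T_0|<\kappa$, the $\alpha_i$ appearing in $T_0$ other than in these few big sentences are fewer than $\kappa$. The idea is to define $f$ on $\beta_\kappa=\kappa$ as a weakly order-preserving surjection onto $\{\beta_i: i\in C\}$. This is possible because $C$ with its order has order type $<\kappa$, and the weakly increasing map $x\mapsto \beta_{\min\{i\in C: i\ge x\}}$ works after we arrange that the $\beta_i$ for $i\in C$ are cofinal in a way that every element of $C$ is hit. For that we may shift the indices in $C$ above $\kappa$'s position, for instance by interpreting $\alpha_\kappa$ as $\kappa^+$ or as a sufficiently large ordinal $\rho$ of cardinality $\kappa$. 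We then check that (\ref{small}) holds for $\lambda=\kappa$ by requiring every element of $\rho$ to have cardinality below $\kappa$, and that (\ref{limit}) holds for $\lambda=\kappa$ by using the cofinal $\lambda_i$. Verifying these two simultaneously, which may require dropping the literal identification of the $\alpha_{\lambda_i}$ with true cardinals, is where I expect the main difficulty to lie.
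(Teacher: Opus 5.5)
There is a genuine gap. The proposal never produces a model of $T_0$, and it ends by naming the simultaneous verification of Items~\ref{limit} and~\ref{small} for $\lambda=\kappa$ as an unresolved difficulty. The root cause is a misdiagnosis at the start: the intended interpretation is not the obstacle. Put $\alpha_i=i$ for \emph{all} $i\le\kappa$, let $E$ hold exactly of these ordinals, interpret $\in$ as true membership, and add the required surjections together with their ordered pairs. Then Items~\ref{one}--\ref{small} hold in full, including the instances for $\lambda=\kappa$, so nothing needs to be reinterpreted.

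Item~\ref{fbasic} clashes with Item~\ref{fpreserve} only when all $\kappa+1$ constants must be hit. For $T_0$ you only need to hit $\alpha_i$ for $i\in C$, and $\theta:=\ot(C)<\kappa$ because $|C|<\kappa$. Enumerate $C=\{i_\xi\mid\xi<\theta\}$ increasingly and set $f(\xi)=i_\xi$ for $\xi<\theta$, $f(\xi)=\kappa$ for $\theta\le\xi<\kappa$, and $f(x)=0$ for every other $x$. This $f$ is weakly order-preserving on $\kappa$, and its image on $\kappa$ contains every $\alpha_i$ with $i\in C$. That is exactly the paper's proof.

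Your candidate $x\mapsto\beta_{\min\{i\in C\mid i\ge x\}}$ is the wrong map, because it places the preimage of $i$ at position $i$ rather than at position $\ot(C\cap i)$. Suppose $\kappa$ is a limit cardinal; it is then necessarily singular here, since $\kappa<\aleph_\delta$. If $T_0$ contains the instance of Item~\ref{limit} for $\lambda=\kappa$, then $C\cap\kappa\supseteq\{\kappa_i\mid i<\cof(\kappa)\}$ is cofinal in $\kappa$. With $\beta_i=i$ your map then never takes the value $\kappa$ on $\kappa$, which violates the instance of Item~\ref{fbasic} for $i=\kappa$ whenever that instance lies in $T_0$.

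The repairs you consider do not help. Keeping $\alpha_{\bar\lambda}=\bar\lambda$ for cardinals $\bar\lambda<\kappa$, Item~\ref{small} for $\lambda=\kappa$ forces every element of $\alpha_\kappa$ to have size ${<}\kappa$, so interpreting $\alpha_\kappa$ as $\kappa^+$ fails. Moreover, an ordinal $\rho$ of size $\kappa$ all of whose elements have size ${<}\kappa$ is just $\kappa$ itself. Likewise, interpreting $\alpha_\kappa$ as a cardinal below $\kappa$, or collapsing $C$ below some $\mu<\kappa$, conflicts with instances of Items~\ref{normal}--\ref{small} that $T_0$ may contain. The missing idea is simply to keep all constants at their intended values and compress $C$ to its order type when defining $f$.
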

%    Let $\bar T\subseteq T$ be of size less than $\kappa$. We may assume that $\bar T$ is infinite. Let $R$ be the set (of size ${<}\kappa$) of all $i<\kappa$ for which $\alpha_i$ is mentioned in a sentence in $\bar T$, and assume without loss of generality that $\alpha_\kappa$ is mentioned in a sentence in $\bar T$. We obtain $M\models\bar T$ by letting $\alpha_i=i$ for $i<\omega$ in $R$, and letting $\alpha_i=2\cdot i$ whenever $i\in R$ with $i\ge\omega$. We include elements $\gamma_j=2\cdot j+1$ for $\omega\le j<|\bar T|^+$ into our model~$M$, and let $E$ hold exactly for all $\alpha_i$'s ($i\in R$) and $\gamma_j$'s ($j<|\bar T|^+$). We pick $\in$ to be the actual elementhood relation, and also include suitable surjections (together with all ordered pairs contained in them) into~$M$ in order to satisfy Item~\ref{small}. Let $B:=\{x\in M\mid M\models x\in\alpha_\kappa\}=\{\alpha_i\mid i\in R\cap\kappa\}\cup\{\gamma_j\mid j<|\bar T|^+\}$, and let $A$ be the proper initial segment of $B$ that is order isomorphic (with respect to $\in$) to $\{\alpha_i\mid i\in R\}$, which exists since $|\bar T|^+>|\bar T|\ge|R|$. Let $f$ be the map from $B$ to $\{\alpha_i\mid i \in R\}$ that is the (unique) order isomorphism between $A$ and $\{\alpha_i\mid i\in R\}$ when restricted to $A$, and which maps all elements of $B\setminus A$ to $\alpha_\kappa$. Then, $f$ is weakly order-preserving with respect to $\in$, and it is straightforward to observe that $\bar T$ is satisfied in $M$.
\begin{proof}
    Let $\bar T\subseteq T$ be of size less than $\kappa$. Let $R$ be the set (of size ${<}\kappa$) of all $i\le\kappa$ for which $\alpha_i$ is mentioned in a sentence in $\bar T$. We let $\theta < \kappa$ be the order type of $R$ and write $R = \{ i_\xi \mid \xi \in \theta \}$, with the $i_\xi$ strictly increasing.
    We obtain $M\models\bar T$ as follows. We let $\alpha_i=i$ for $i\leq \kappa$, and let $E$ hold exactly for all $\alpha_i$, $i \leq \kappa$. We pick $\in$ to be the actual elementhood relation, and also include suitable surjections (together with all ordered pairs $(x,y)$, coded as $\{\{x\},\{x,y\}\}$, contained in them, as well as the corresponding sets $\{x\}$ and $\{x,y\}$) into~$M$ in order to satisfy Item~\ref{small}. Trivially, this model satisfies the axioms from items~\ref{one}--\ref{small}; to satisfy the axioms contained in $\bar T$ from the remaining items, we let $f$ be the assignment $\xi \mapsto i_\xi$ for $\xi < \theta$ and $\xi \mapsto \kappa$, for $\theta \leq \xi < \kappa$. For the remaining $x\in M$ we set $f(x) = 0$.
    Then, $f$ is weakly order-preserving with respect to $\in$ for elements of $\alpha_\kappa = \kappa$, as required by Item~\ref{fpreserve}, and it is straightforward to observe that the axioms from Item~\ref{fbasic} that are contained in $\bar T$ are satisfied.
  \end{proof}
    
   We will now show that $T$ is not satisfiable, thus yielding the statement of our theorem. Assume for a contradiction that $M\models T$.
   
  \begin{claim}
    \begin{itemize}
      \item For all cardinals $\lambda\le\kappa$,
    $|\{x\in M\mid M\models x\in\alpha_{\lambda}\}|=\lambda$.
      \item If $\lambda\le\kappa$ is an infinite cardinal, then $\{\alpha_i\mid i<\lambda\}$ is unbounded in $\alpha_\lambda$ with respect to $\in$.
  \end{itemize}
    \end{claim}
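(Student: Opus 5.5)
We prove both items simultaneously by induction on the cardinal $\lambda\le\kappa$. For each $\lambda$ write $A_\lambda=\{x\in M\mid M\models x\in\alpha_\lambda\}$. Item~\ref{normal} and the strict linear order from Item~\ref{lo} show that the map $i\mapsto\alpha_i^M$ is injective on $i<\lambda$. Every such $\alpha_i^M$ lies in $A_\lambda$, so $|A_\lambda|\ge\lambda$ always holds. The work is in the upper bound, together with the unboundedness statement.

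\emph{Finite $\lambda=m$.} Item~\ref{finite} gives $A_m=\{\alpha_k^M\mid k<m\}$, so $|A_m|=m$.

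\emph{Limit cardinal $\lambda$.} Item~\ref{limit} gives
\[A_\lambda=\bigcup_{i<\cof(\lambda)}A_{\lambda_i}.\]
By induction each $A_{\lambda_i}$ has size $\lambda_i$. Hence $|A_\lambda|\le\cof(\lambda)\cdot\sup_i\lambda_i=\lambda$. Unboundedness for limit $\lambda$ follows directly: any $x\in A_\lambda$ lies in some $A_{\lambda_i}$, so $x\in\alpha_{\lambda_i}$, and $\lambda_i<\lambda$. (For $\lambda=\omega$ this is the finite case composed with Item~\ref{finite}.)

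\emph{Successor cardinal $\lambda=\mu^+$.} This is the main step. By Item~\ref{small}, every $x\in A_\lambda$ is a surjective image, in $M$, of $\alpha_{\bar\lambda}$ for some $\bar\lambda\le\mu$. Fix a witnessing $g\in M$. Since $E$ is transitive and $E(\alpha_\lambda)$ holds, $E(x)$ holds, and the elements of $x$ are $\in$-predecessors of $x$, all in $A_\lambda$ by transitivity of the order. The surjection $g$ yields a genuine surjection from $A_{\bar\lambda}$ onto $\{y\mid M\models y\in x\}$: for each $u\in A_{\bar\lambda}$ take the unique $y$ with $(u,y)\in g$. Here we must check that ordered-pair membership in $M$ behaves well enough. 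I would interpret "$(x,y)\in g$" as an abbreviation, and this is the point I expect to need care. By induction $|A_{\bar\lambda}|\le\mu$, so each $x\in A_\lambda$ has at most $\mu$ predecessors.

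Now $A_\lambda$ is linearly ordered by $\in^M$, and every proper initial segment has size $\le\mu$. We do not yet know the order is well-founded, but we do not need it. Suppose some $x\in A_\lambda$ is an upper bound of $\{\alpha_i^M\mid i<\lambda\}$. Then $x$ has $\ge\lambda>\mu$ predecessors, a contradiction. This proves unboundedness. Consequently $A_\lambda$ is the union of the initial segments below the $\alpha_i^M$, $i<\lambda$, giving $|A_\lambda|\le\lambda\cdot\mu=\lambda$. The same predecessor-counting argument also handles unboundedness at limit $\lambda$ uniformly.

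The main obstacle is the successor step. There we need the surjections inside the possibly ill-founded model $M$ to transfer to real cardinality bounds. That transfer only requires that $M$'s notion of "the unique $y$ paired with $u$" is a genuine function on $A_{\bar\lambda}$, which the first bullet of Item~\ref{small} guarantees.
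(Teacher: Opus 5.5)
Your proposal is correct and follows essentially the same route as the paper. Both argue by induction on $\lambda$, using Item~\ref{finite} for finite $\lambda$ and Item~\ref{limit} for limit cardinals. In the successor case, both use Item~\ref{small} together with the induction hypothesis to bound the $\in$-predecessors of any element of $\alpha_{\mu^+}$ by $\mu$; this gives unboundedness of the $\alpha_i$, and then $|A_{\mu^+}|\le\mu^+$. Your extra care — injectivity of $i\mapsto\alpha_i^M$, and turning the internal surjection $g$ into a genuine one — only makes explicit what the paper leaves implicit.
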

  \begin{proof}
    By induction on~$\lambda\leq\kappa$: The start of the induction (for all $\lambda<\omega$) is provided by Item \ref{finite}. If $\lambda\leq\kappa$ is a limit cardinal, the claim is immediate by Item~\ref{limit}. Assume that the claimed statement holds for $\lambda<\kappa$. By Item \ref{normal}, it holds that $|\{x\in M\mid M\models x\in\alpha_{\lambda^+}\}|\ge\lambda^+$.  %By Items \ref{one}--\ref{lo}, these $x$ are linearly ordered by $\in$ in $M$.
    Note that our induction hypothesis implies that there cannot be $x\in M$ with $\forall i<\lambda^+\ M\models\alpha_i\in x\in\alpha_{\lambda^+}$, since the existence of such $x$ would clearly contradict Item \ref{small}. This means that the $\alpha_i$ for $i<\lambda^+$ are unbounded in~$\alpha_{\lambda^+}$ with respect to $\in$ in $M$. This in turn means that \[\{x\in M\mid M\models x\in\alpha_{\lambda^+}\}=\bigcup_{i<\lambda^+}\{x\in M\mid M\models x\in\alpha_i\}.\] By Item \ref{small} with respect to the $\alpha_i$'s, and by our inductive hypothesis, this set has cardinality at most 
    ~$\lambda^+$.
  \end{proof}

  We will now obtain a contradiction by considering the interpretation of $f$ in $M$. By Item \ref{fbasic}, there has to be some $x\in\alpha_\kappa$ such that $f(x)=\alpha_\kappa$. But note that by Item \ref{small} and the previous claim, $\{y\in M\mid M\models y\in x\}$ has cardinality less than $\kappa$. This means that there has to be $i<\kappa$ such that $\lnot f(y,\alpha_i)$ whenever $M\models y\in x$. But whenever $M\models x\in y$, by Item \ref{fpreserve}, it follows that $\alpha_\kappa \in f(y)$ or $f(y)=\alpha_\kappa$. Summing up, $\alpha_i$ cannot be in $f(\alpha_\kappa)$, contradicting Item \ref{fbasic}.

%   \begin{claim}
%     If $0\leq j<n$ and $\langle i_k\mid k<\omega_j\rangle$ is a sequence of ordinals below $\omega_n$ such that $M\models\alpha_{i_k}\in\alpha_{i_l}$ whenever $k<l<\omega_j$, then there is $i<\omega_n$ such that $M\models\alpha_{i_k}\in\alpha_i$ whenever $k<\omega_j$.
%   \end{claim}
%   \begin{proof}
%     Assume for a contradiction that this is not the case, as witnessed by a sequence $\langle i_k\mid k<\omega_j\rangle$. This means that whenever $i<\omega_n$, there is $k<\omega_j$ such that $M\models\alpha_i\in\alpha_{i_k}$. By the regularity of $\omega_n$, there is thus $k<\omega_j$ with $|\{m<\omega_n\mid M\models\alpha_m\in\alpha_{i_k}\}|=\omega_n$, contradicting Item \ref{small} by our first claim above.
%   \end{proof}
%   By the second claim above, we can construct a sequence $\langle i_k\mid k\le\omega_{n-1}\rangle$ of ordinals below $\omega_n$ such that $M\models\alpha_{i_k}\in\alpha_{i_l}$ whenever $k<l\le\omega_{n-1}$. But then, $|\{j<\omega_n\mid M\models \alpha_j\in\alpha_{i_{\omega_{n-1}}}\}|\ge\omega_{n-1}$, contradicting Item \ref{small} by our first claim above.
 \end{proof}

  The following small observation, which we thought is nice enough to include, is an adaptation of one part of \cite[Proposition 4.4]{kanamori}:

\begin{observation}\label{obs:fromkanamori}
  If $\omega\le\bar\delta<\delta\le\kappa$, with $\delta$ regular, and $\kappa$ is $(\delta,\kappa)$-compact, then $2^{\bar\delta}<\kappa$. % has cofinality $\ge\delta$ and satisfies $2^{<\delta}<\kappa$.
\end{observation}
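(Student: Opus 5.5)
Suppose for a contradiction that $2^{\bar\delta}\ge\kappa$. Then there is an injection of $\kappa$ into ${}^{\bar\delta}2$, so we may fix pairwise distinct functions $\langle f_\alpha\mid\alpha<\kappa\rangle$ in ${}^{\bar\delta}2$. The idea, following Kanamori's argument, is to build a ${<}\kappa$-satisfiable $\mathcal L_{\delta,\omega}$-theory in a language of size $\kappa$ that says there is a new function from $\bar\delta$ to $2$ different from every $f_\alpha$, but which is pinned down to lie in ${}^{\bar\delta}2$. Since $\bar\delta<\delta$, a conjunction or disjunction over $\bar\delta$ is available in $\mathcal L_{\delta,\omega}$, and this is what allows the pinning down.

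\textbf{The theory.} We use constant symbols $c_\xi$ for $\xi<\bar\delta$, constants $0,1$, a binary relation symbol $R$, and a new constant $d$; the language has size $\bar\delta<\kappa$. The theory $T$ contains the following sentences.
\begin{itemize}
\item $c_\xi\ne c_\eta$ for $\xi<\eta<\bar\delta$, and $0\ne 1$.
\item For each $\xi<\bar\delta$, the sentence that exactly one of $R(c_\xi,0)$ and $R(c_\xi,1)$ holds. So $R$ restricted to the $c_\xi$'s codes a function $g\colon\bar\delta\to 2$.
\item For each $\alpha<\kappa$, the sentence $\sigma_\alpha$ given by
\[\bigvee_{\xi<\bar\delta}\lnot R(c_\xi,f_\alpha(\xi)),\]
which says that $g\ne f_\alpha$. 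Here $f_\alpha(\xi)$ denotes the constant $0$ or $1$.
\end{itemize}
The constant $d$ can in fact be dropped. Each $\sigma_\alpha$ is a disjunction of size $\bar\delta<\delta$, so $T$ is an $\mathcal L_{\delta,\omega}$-theory of size $\kappa$ in a language of size at most $\kappa$.

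\textbf{${<}\kappa$-satisfiability.} Let $T_0\subseteq T$ have size less than $\kappa$. Then $T_0$ mentions fewer than $\kappa$ of the $\sigma_\alpha$. Since the $f_\alpha$ are distinct and there are $\kappa$ many of them, we can pick some $\beta<\kappa$ such that $\sigma_\beta\notin T_0$. Interpret $c_\xi$ as $\xi$, interpret $0,1$ as themselves, and interpret $R$ as the graph of $f_\beta$. Every $\sigma_\alpha$ with $\alpha\ne\beta$ then holds, because $f_\alpha\ne f_\beta$.

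\textbf{Unsatisfiability.} By $(\delta,\kappa)$-compactness, $T$ would have a model $M$. Define $g\colon\bar\delta\to2$ by letting $g(\xi)=i$ iff $M\models R(c_\xi,i)$. The axioms ensure $g$ is well defined and total. Now $g$ must be equal to some $f_\alpha$, or else it differs from all of them; we need a contradiction in either case. The only danger is that $g$ differs from every $f_\alpha$, which is possible whenever $\kappa<2^{\bar\delta}$. This is the main obstacle, and it means we must use the full hypothesis $2^{\bar\delta}\ge\kappa$ more cleverly.

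\textbf{Fixing the obstacle.} The fix is to take all of ${}^{\bar\delta}2$ and shrink the language. Index the sentences $\sigma_f$ by all $f\in{}^{\bar\delta}2$. We then need $T$ to have size at most $\kappa$, and this fails if $2^{\bar\delta}>\kappa$. So instead we first take $\mu$ least such that $2^\mu\ge\kappa$; we have $\mu\le\bar\delta$. Note that $2^{<\mu}<\kappa$ is not automatic, so it is cleaner to argue by minimality as follows.

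Let $\nu=2^{\mu}\ge\kappa$, and choose $\kappa$ many functions in ${}^{\mu}2$ such that every $h\in{}^\mu2$ fails to be "covered". This does not work directly either.

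The better route is Kanamori's: the filter extension property from Proposition~\ref{proposition:filterextensionchar} yields a ${<}\delta$-complete uniform ultrafilter $U$ on $\kappa$. To get it, extend the ${<}\kappa$-complete filter of co-small sets on $E=\cP(\kappa)$ restricted to a suitable family of size $\kappa$. Take the family $E$ generated by the sets
\[A_{\xi,i}=\{\alpha<\kappa\mid f_\alpha(\xi)=i\}\]
together with the bounded sets. This family has size at most $\kappa$, and its ${<}\delta$-intersections are not needed in $E$ itself. The resulting ${<}\delta$-complete $U$ picks, for each $\xi<\bar\delta$, an $i_\xi$ with $A_{\xi,i_\xi}\in U$.

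By ${<}\delta$-completeness, the intersection $\bigcap_{\xi<\bar\delta}A_{\xi,i_\xi}$ is nonempty. But this intersection has at most one element, namely the unique $\alpha$ with $f_\alpha=(i_\xi)_\xi$, if such an $\alpha$ exists. To turn this into a contradiction, also put the complements $\kappa\setminus\{\alpha\}$ into $F$ as co-bounded sets. These form a ${<}\kappa$-complete filter base because $\kappa$ is infinite. Then $U$ contains $\kappa\setminus\{\alpha\}$, and intersecting it with $\bigcap_\xi A_{\xi,i_\xi}$ gives a ${<}\delta$-sized intersection of members of $U$ that is empty. This is the desired contradiction, and it shows $2^{\bar\delta}<\kappa$.
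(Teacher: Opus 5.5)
Your final argument, via the filter extension property, is correct, but it follows a different route from the paper's. The detour that led you to it also rests on a misreading of the definition.

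\textbf{The paper's route.} The paper's proof is exactly the fix you discarded. It uses constants $c_\xi$ ($\xi<\bar\delta$) and $d^0,d^1$, the sentences $c_\xi=d^0\lor c_\xi=d^1$, and a sentence $\bigvee_{\xi<\bar\delta}c_\xi\ne d^{f(\xi)}$ for \emph{every} $f\in{}^{\bar\delta}2$. In any model, setting $g(\xi)=0$ iff $c_\xi=d^0$ makes the sentence for $g$ false, so the theory is unsatisfiable. On the other hand, any subtheory of size less than $\kappa$ (hence less than $2^{\bar\delta}$) omits the sentence of some $g$. It is then satisfied by interpreting $d^i$ as $i$ and $c_\xi$ as $g(\xi)$.

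Your objection that $T$ would have more than $\kappa$ sentences is irrelevant. Definition~\ref{def:compact} bounds only the size of the \emph{language} (here $\bar\delta$), not the size of the theory.

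Your own route uses this implicitly. The first part of Proposition~\ref{proposition:filterextensionchar} applies compactness to the full $\mathcal L_{\delta,\omega}$-theory of $(\kappa,(P)_{P\in E})$. That theory contains $\lnot\exists x\bigwedge_{\xi<\bar\delta}A_{\xi,g(\xi)}(x)$ for every $g\in{}^{\bar\delta}2$ not among the $f_\alpha$, so it may have more than $\kappa$ sentences. This is exactly why the filter argument succeeds with only $\kappa$ many fixed $f_\alpha$, where your first theory failed.

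\textbf{Comparison.} The paper gives a self-contained three-line compactness argument in a language of size $\bar\delta$. You instead run Kanamori's ultrafilter argument through Proposition~\ref{proposition:filterextensionchar}. In your version, non-principality (the co-singletons in $F$) produces the contradiction, and only the nonempty-intersection form of ${<}\delta$-completeness is used.

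\textbf{Two small repairs to your write-up.}
\begin{enumerate}
\item The family of ``bounded sets'' has size $2^{<\kappa}$, which can exceed $\kappa$ (e.g.\ if $2^{\bar\delta}>\kappa$). Drop it, and let $E$ consist of $\emptyset$, $\kappa$, the sets $A_{\xi,i}$, and all singletons and co-singletons. This family has size $\kappa$, is closed under complements, and is all you actually use.
\item The filter extension property yields a ${<}\delta$-complete filter measuring $E$, not a uniform ultrafilter on $\kappa$. Again, that is all you need.
\end{enumerate}
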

\begin{proof}
  Assume that $\kappa$ is $(\delta,\kappa)$-compact.
  Assume for a contradiction that $2^{\bar\delta}\ge\kappa$ for some $\bar\delta<\delta$.
  Let $c_\alpha$ and $d^i$ be constants for $\alpha<\bar\delta$ and $i<2$.
  Let $T$ be the theory that consists of the following:
  \begin{itemize}
    \item $\{c_\alpha=d^0\lor c_\alpha=d^1\mid\alpha<\bar\delta\}$.
    \item $\{\bigvee_{\alpha<\bar\delta}(c_\alpha\ne d^{f(\alpha)})\mid f\in{}^{\bar\delta} 2\}$.
  \end{itemize}
  Then, $T$ is an $\mathcal L_{\delta,\omega}$-theory in a language of size $\bar\delta<\kappa$.
  Since $2^{\bar\delta}\ge\kappa$, $T$ is ${<}\kappa$-satisfiable. However, $T$ is not satisfiable, which is a contradiction.
  %First, assume for a contradiction that $\kappa$ has cofinality $\bar\kappa<\delta$. For every $\alpha<\bar\kappa$, let $\beta_\alpha$ be a constant symbol, let $P_\alpha$ be a predicate symbol, and let $c$ be a constant symbol. Let $<$ be a binary relation. Consider the theory $T$ which consists of the following:
  % \begin{itemize}
  %   \item $<$ is a total ordering of the domain of our structure.
  %   \item $\beta_{\alpha_0}<\beta_{\alpha_1}$ whenever $\alpha_0<\alpha_1<\bar\kappa$.
  %   \item $\forall x\ (P_{\alpha}(x) \iff \beta_\alpha\le x<\beta_{\alpha+1}$).
  %   \item $\forall x\bigvee_{\alpha<\bar\kappa}P_\alpha(x)$.
  %   \item $\{c\ne\alpha\mid\alpha<\kappa\}$.
  % \end{itemize}
\end{proof}

\section{More on compact cardinals}\label{section:more}
  
  Our next goal is to investigate the relationship between $(\delta,\lambda)$-compact cardinals and $(\delta,\lambda)$-strongly compact cardinals, as introduced by Boney and Unger \cite{boneyunger}.  
  Let~$\kappa$ be a regular cardinal, and let $A$ be any set. We let $\cP_\kappa(A)$ denote the collection of subsets of $A$ of cardinality less than $\kappa$. If $F\subseteq\cP(\cP_\kappa(A))$ is a filter, we say that $F$ is \emph{fine} in case whenever $a\in A$, $\{x\in\cP_\kappa(A)\mid a\in x\}\in F$. We say that $F$ is an \emph{ultrafilter} on a set $X$ if it measures $X$. In the terminology of \cite{boneyunger}, for uncountable cardinals $\delta\le\kappa\le\lambda$, $\kappa$ is \emph{$(\delta,\lambda)$-strongly compact} if there is a fine, ${<}\delta$-complete ultrafilter on $\cP(\cP_\kappa(\lambda))$.

\begin{proposition}\label{prop:filterextensioneasy}
  If $\delta\le\kappa\le\lambda$ are uncountable cardinals with $\delta$ regular, and $\kappa$ has the $(\delta,2^{(\lambda^{<\kappa})})$-filter extension property (in particular, this is the case if $\kappa$ is $(\delta,2^{(\lambda^{<\kappa})})$-compact by Proposition \ref{proposition:filterextensionchar}), then there is a fine, ${<}\delta$-complete ultrafilter on $\cP(\cP_\kappa(\lambda))$ -- that is, $\kappa$ is $(\delta,\lambda)$-strongly compact, or equivalently, there is an elementary embedding $j\colon V\to W$ for some transitive class $W\subseteq V$, with critical point $\crit\,j\ge\delta$\footnote{The \emph{critical point} of an elementary embedding between transitive sets or classes is the least ordinal $\alpha$ that is moved by $j$ nontrivially, that is $j(\alpha)>\alpha$. Statements like $\crit\,j\ge\delta$ are meant to include the statement that $\crit\,j$ exists.} and $D\in W$ with $j[\lambda]\subseteq D$ and $|D|<j(\kappa)$ in~$W$.
\end{proposition}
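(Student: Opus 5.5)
Let $X=\cP_\kappa(\lambda)$, so that $|X|=\lambda^{<\kappa}$, and let $E=\cP(X)$. Then $E$ has size $2^{(\lambda^{<\kappa})}$, is closed under complements in $X$, and contains $X$. The plan is to apply the $(\delta,2^{(\lambda^{<\kappa})})$-filter extension property to a suitable ${<}\kappa$-complete filter $F\subseteq E$ that encodes fineness.

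For $a\in\lambda$ let $\hat a=\{x\in X\mid a\in x\}$, and let $F$ consist of all $Y\in E$ that contain some finite intersection $\hat a_1\cap\dots\cap\hat a_n$. Upward closure of $F$ in $E$, $X\in F$ and $\emptyset\notin F$ are clear. For ${<}\kappa$-completeness in the paper's weak sense, take $(Y_i)_{i<\mu}$ in $F$ with $\mu<\kappa$. Each $Y_i$ contains $\bigcap_{a\in s_i}\hat a$ for some finite $s_i\subseteq\lambda$. Put $s=\bigcup_{i<\mu}s_i$; then $|s|<\kappa$ (as $\kappa$ is infinite, and $\mu<\kappa$), so $s\in X$ and $s\in\bigcap_{i<\mu}Y_i$. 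By the filter extension property we obtain a ${<}\delta$-complete filter $U\supseteq F$ measuring $E=\cP(X)$. Since it contains every $\hat a$, $U$ is fine, and it is an ultrafilter in the sense used here. The main point to check is that $U$ is also ${<}\delta$-complete in the usual sense, namely closed under intersections of fewer than $\delta$ sets, since the paper's notion only requires nonempty intersections. But if $Y_i\in U$ for $i<\mu<\delta$ and $\bigcap Y_i\notin U$, then its complement lies in $U$ because $U$ measures $\cP(X)$, and together with the $Y_i$ this gives a family of fewer than $\delta$ members of $U$ with empty intersection, a contradiction. So $\kappa$ is $(\delta,\lambda)$-strongly compact.

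The equivalence with the embedding formulation is the standard ultrapower argument; this is the step needing the most set-theoretic care. In the forward direction, let $j\colon V\to W=\Ult(V,U)$, which is well founded because $U$ is countably complete ($\delta$ is uncountable); so we take it transitive. By ${<}\delta$-completeness, $\crit\,j\ge\delta$ (Łoś plus completeness shows $j(\alpha)=\alpha$ for $\alpha<\delta$, and countable completeness gives a critical point, for otherwise $U$ would be principal, contradicting fineness together with $\lambda\ge\kappa$). Let $D=[\id]_U$. Fineness gives $j(a)\in D$ for every $a<\lambda$ by Łoś, and $|D|<j(\kappa)$ in $W$ since $|x|<\kappa$ for all $x\in X$. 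In the converse direction, given $j$ and $D$, we shrink $D$ to $D\cap j(\lambda)$, which lies in $j(\cP_\kappa(\lambda))$, and define $U=\{Y\subseteq\cP_\kappa(\lambda)\mid D\cap j(\lambda)\in j(Y)\}$. This $U$ is an ultrafilter; it is fine because $j(a)\in D$; and it is ${<}\delta$-complete because $j(\bigcap_{i<\mu}Y_i)=\bigcap_{i<\mu}j(Y_i)$ for $\mu<\crit\,j$.
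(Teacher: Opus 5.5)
Your proof is correct and follows essentially the same route as the paper. Both apply the filter extension property to the fineness filter with $E=\cP(\cP_\kappa(\lambda))$, pass to the ultrapower by $U$, and recover $U$ from an embedding via $\{Y\mid D\in j(Y)\}$. Your extra steps tidy up details the paper leaves implicit: closing $F$ upward so it is genuinely a filter, upgrading the paper's weak ${<}\delta$-completeness to closure under ${<}\delta$-intersections, replacing $D$ by $D\cap j(\lambda)$, and obtaining the critical point from non-principality rather than from $\kappa\le\ot\,D<j(\kappa)$.
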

\begin{proof}
  Let $F=\{\{x\in\cP_\kappa(\lambda)\mid\gamma\in x\}\mid\gamma<\lambda\}$. Since $F$ is ${<}\kappa$-complete, we may use the $(\delta,2^{(\lambda^{<\kappa})})$-filter extension property to find $U\supseteq F$ which is ${<}\delta$-complete and measures $\cP(\cP_\kappa(\lambda))$. Note that for $U$ to extend $F$ just means that $U$ is fine. The rest of the argument is very much standard, but will be provided for the benefit of our readers, and also since we will need to use a variation of this argument later on: We use $U$ to define an equivalence relation \[f\sim g\ \iff\ \{x\in\cP_\kappa(\lambda)\mid f(x)=g(x)\}\in U\] for $f,g\colon\cP_\kappa(\lambda)\to V$, denote the equivalence class of $f\colon\cP_\kappa(\lambda)\to V$ by $[f]_U$, and define the ultrapower \[W:=\Ult(V,U)=\{[f]_U\mid f\colon\cP_\kappa(\lambda)\to V\}.\]
  Repeated application of \L o\'s's theorem yields the following: By the ${<}\omega_1$-complete\-ness of $U$, $\Ult(V,U)$ is well-founded, and we may identify it with its transitive collapse~$W$. Let $j\colon V\to W$ be defined via $j(x)=[c_x]_U$, for $x\in V$, where $c_x$ denotes the constant function with value $x$ and domain $\cP_\kappa(\lambda)$. Let $D:=[\id]_U\in W$. The embedding $j$ is nontrivial, since $\forall\alpha<\kappa\ \alpha<\ot\,D<j(\kappa)$. By the ${<}\delta$-completeness of $U$, $\crit\,j\,\ge\delta$. Moreover, it easily follows that $j[\lambda]\subseteq D$ and $|D|<j(\kappa)$ in~$W$, as desired.

  Given $j\colon V\to W$ with the above properties, let \[U=\{A\in\cP(\cP_\kappa(\lambda))\mid D\in j(A)\}.\] Then, $U$ is easily seen to be a ${<}\delta$-complete, fine ultrafilter on $\cP(\cP_\kappa(\lambda))$, using that $\crit\,j\,\ge\delta$, $j[\lambda]\subseteq D$, and that $D\in(\cP_{j(\kappa)}(j(\lambda)))^W$.
\end{proof}

\begin{proposition}\label{prop:pklequivalence}
  If $\delta\le\kappa\le\lambda$ are uncountable cardinals with $\delta$ regular and with $\lambda^{<\delta}=\lambda$, and $\kappa$ is $(\delta,\lambda)$-strongly compact then $\kappa$ is $(\delta,\lambda)$-compact.
\end{proposition}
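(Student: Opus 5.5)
We show directly that every ${<}\kappa$-satisfiable $\mathcal L_{\delta,\omega}$-theory $\Sigma$ in a language $\mathcal L$ of size at most $\lambda$ is satisfiable. The argument is a \L o\'s-type ultraproduct construction modelled on the second half of the proof of Proposition~\ref{proposition:filterextensionchar}. The difference is that the fine ${<}\delta$-complete ultrafilter $U$ on $\cP(\cP_\kappa(\lambda))$ now lives on $\cP_\kappa(\lambda)$ rather than on $[\Sigma]^{<\kappa}$. Because $U$ measures \emph{all} subsets, we no longer need the L\"owenheim--Skolem substructure $\mathfrak B$; we can use the full product.

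\textbf{Step 1 (indexing).} Since $\delta$ is regular, $|\mathcal L|\le\lambda$ and $\lambda^{<\delta}=\lambda$, we have $|\Sigma|\le\lambda$. Fix an injection $e\colon\Sigma\to\lambda$. For $x\in\cP_\kappa(\lambda)$ let $\Delta_x=\{\sigma\in\Sigma\mid e(\sigma)\in x\}$; this has size less than $\kappa$, so we may choose a model $\mathfrak A_x\models\Delta_x$. Fineness of $U$ gives, for each $\sigma\in\Sigma$,
\[\{x\mid\sigma\in\Delta_x\}=\{x\mid e(\sigma)\in x\}\in U.\]

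\textbf{Step 2 (ultraproduct).} Form $\mathfrak D=\prod_x\mathfrak A_x/U$. Define $f\sim g$ iff $\{x\mid f(x)=g(x)\}\in U$, interpret function symbols pointwise, and let $R^{\mathfrak D}([f])$ hold iff $\{x\mid \mathfrak A_x\models R(f(x))\}\in U$. Well-definedness uses only that $U$ is a filter measuring everything. Here $U$ is a genuine ultrafilter on the full power set, closed under finite intersections and supersets. The given definition of ${<}\delta$-completeness only asserts nonempty intersections, but since $U$ measures every set, we can recover closure: if $\bigcap_{i<\mu}Y_i\notin U$ then its complement is in $U$, and intersecting it with the $Y_i$ gives an empty intersection of ${<}\delta$ many members of $U$, contradicting ${<}\delta$-completeness. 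So $U$ is closed under intersections of fewer than $\delta$ sets.

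\textbf{Step 3 (\L o\'s's theorem for $\mathcal L_{\delta,\omega}$).} By induction on formulas, show for all $\varphi(\vec v)\in\mathcal L_{\delta,\omega}$ and $\vec f$:
\[\mathfrak D\models\varphi([\vec f])\iff\{x\mid\mathfrak A_x\models\varphi(\vec f(x))\}\in U.\]
\begin{itemize}
\item Atomic formulas hold by definition.
\item Negation uses that $U$ is an ultrafilter.
\item Conjunctions of size less than $\delta$ use the closure from Step~2 in both directions.
\item For the existential step, pick a witness $g(x)\in\mathfrak A_x$ on the set where one exists and arbitrary elsewhere. This uses AC on the full product, which is harmless since we are not restricted to a small substructure.
\end{itemize}
Combining this with Step~1, every $\sigma\in\Sigma$ holds in $\mathfrak D$, so $\mathfrak D\models\Sigma$.

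\textbf{Main obstacle.} The delicate points are the use of $\lambda^{<\delta}=\lambda$ to embed $\Sigma$ into $\lambda$, and the upgrade of the paper's weak ``nonempty intersection'' completeness to genuine closure under ${<}\delta$ intersections. The infinitary conjunction step in Step~3 depends on the latter. Both appear routine, so I expect the proof to be short, citing the proof of Proposition~\ref{proposition:filterextensionchar} for the inductive argument.
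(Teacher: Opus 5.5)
Your proof is correct and is essentially the paper's argument: the paper also reruns the second half of the proof of Proposition~\ref{proposition:filterextensionchar}, using $|\Sigma|\le\lambda^{<\delta}=\lambda$ to identify $[\Sigma]^{<\kappa}$ with $\cP_\kappa(\lambda)$, with the fine ${<}\delta$-complete ultrafilter taking the place of the filter obtained from the extension property. Your variations are all valid minor streamlinings: you take the full ultraproduct instead of the L\"owenheim--Skolem substructure $\mathfrak B$, you use the map $x\mapsto\Delta_x$ (which also handles $|\Sigma|<\lambda$ cleanly), and you explicitly upgrade the paper's weak completeness notion to closure under ${<}\delta$ intersections.
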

\begin{proof}
  This follows from the second part of the proof of Proposition \ref{proposition:filterextensionchar} -- note that, using the notation of that proof, the set $\Sigma$ has size at most $\lambda^{<\delta}=\lambda$ and hence we can identify $[\Sigma]^{<\kappa}$ with $\cP_\kappa(\lambda)$. Then clearly, a fine, ${<}\delta$-complete ultrafilter on $\cP_\kappa(\lambda)$ is sufficient to proceed with the argument from that proof.
\end{proof}

We now modify the above so that we obtain an actual characterization of $(\delta,\lambda)$-compactness. Given a set $M$, we say that a filter $U\subseteq\cP(\cP_\kappa(\alpha))\cap M$ is an \emph{$M$-ultrafilter} on $\cP(\cP_\kappa(\alpha))$ if $U$ measures $\cP(\cP_\kappa(\alpha))\cap M$. We say that such $U$ is \emph{fine} if it contains the set $\{x\in\cP_\kappa(\alpha)\mid\gamma\in x\}$ as an element whenever $\gamma\in\alpha\cap M$. Furthermore, if $M\prec H(\theta)$ for some regular cardinal $\theta$, and $j\colon(M,\in)\to(N,\in_N)$ is an elementary embedding with $N$ not necessarily transitive, the \emph{critical point} $\crit\,j$ of $j$ (which may or may not exist) is the least ordinal $\gamma\in M$ with the property that $\{\bar\gamma\in N\mid\bar\gamma\in_N j(\gamma)\}\supsetneq j[\gamma\cap M]$.

\begin{proposition}\label{prop:filterextension}
  Let $\delta\le\kappa\le\lambda\le\alpha<\theta$ be uncountable cardinals with $\delta$ and~$\theta$ regular. %, and with $\lambda^{<\delta}=\lambda$.
  Then, if~$\kappa$ has the $(\delta,\lambda)$-filter extension property and $M\prec H(\theta)$ is of size~$\lambda$ with $\lambda+1\subseteq M$ and $\cP_\kappa(\alpha)\in M$, then there is a ${<}\delta$-complete, fine $M$-ultrafilter~$U$ on $\cP(\cP_\kappa(\alpha))$.

  \medskip
  
  The existence of such $U$ is equivalent to the existence of an elementary embedding $j\colon M\to N$ with $\crit\,j\ge\delta$ (but $N$ not necessarily transitive), and a set $D\in N$ with $j[\alpha\cap M]\subseteq D$ and $|D|<j(\kappa)$ in $N$.

  %In particular, if $\lambda^{<\kappa}=\lambda$, we may pick $M\supseteq[\lambda]^{<\kappa}$ and hence obtain $U$ which is a ${<}\delta$-complete, fine ultrafilter on $\cP_\kappa(\lambda)$
\end{proposition}
\begin{proof}
  We proceed similarly to the argument for Proposition \ref{prop:filterextensioneasy}. First, assume that~$\kappa$ has the $(\delta,\lambda)$-filter extension property and that $M\prec H(\theta)$ is of size~$\lambda$ with \hbox{$\lambda+1\subseteq M$} and $\cP_\kappa(\alpha)\in M$.
  We let \[F=\{\{x\in\cP_\kappa(\alpha)\mid\gamma\in x\}\mid\gamma\in\alpha\cap M\}.\] Since $F$ is ${<}\kappa$-complete and $F\subseteq M$, we may use the $(\delta,\lambda)$-filter extension property to find $U\supseteq F$ which is ${<}\delta$-complete and measures $\cP(\cP_\kappa(\alpha))\cap M$, since $M$ has size~$\lambda$.
  
  Given such $U$, we define the ultrapower \[N:=\Ult(M,U)=\{[f]_U\mid f\colon\cP_\kappa(\alpha)\to M,f\in M\}.\]
  Let $j\colon M\to N$ be defined via $j(x)=[c_x]_U$, for $x\in M$. Let $D:=[\id]_U\in N$. Repeated application of \L o\'s's theorem yields the following: The critical point $\crit \,j$ of $j$ exists, since $\forall\alpha<\kappa\ [c_\alpha]<\ot\,D<j(\kappa)$. By the ${<}\delta$-completeness of~$U$, $\crit\,j\ge\delta$. Moreover, it easily follows that $j[\alpha\cap M]\subseteq D$ and $|D|<j(\kappa)$ in $N$, as desired.

  Given $j\colon M\to N$ and $D\in N$ as in the final statement of our proposition, let 
  \[U=\{A\in\cP(\cP_\kappa(\alpha))\mid A\in M\,\land\,D\in j(A)\}.\] 
  Then, $U$ is easily seen to be a ${<}\delta$-complete, fine $M$-ultrafilter on $\cP(\cP_\kappa(\alpha))$, using that $\crit\,j\ge\delta$, $j[\alpha\cap M]\subseteq D$, and that $D\in(\cP_{j(\kappa)}(j(\alpha)))^N$.
\end{proof}

\begin{corollary}\label{cor:pklsubtleequivalence}
  Let $\delta\le\kappa\le\lambda$ be uncountable cardinals with $\delta$ regular and $\lambda^{<\delta}=\lambda$. Then, $\kappa$ is $(\delta,\lambda)$-compact if and only if for all (equivalently, for some) regular $\theta>\lambda^{<\kappa}$, whenever $M\prec H(\theta)$ is of size $\lambda$ with $\lambda+1\subseteq M$, there is~$U$ which is a ${<}\delta$-complete, fine $M$-ultrafilter on $\cP_\kappa(\lambda)$.

  The existence of such $U$ is equivalent to the existence of an elementary embedding $j\colon M\to N$ with $\crit\,j\ge\delta$ (but $N$ not necessarily transitive), and a set $D\in N$ with $j[\lambda\cap M]\subseteq D$ and $|D|<j(\kappa)$ in $N$.
\end{corollary}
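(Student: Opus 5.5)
We prove both directions, with Proposition \ref{prop:filterextension} for the forward direction and the second half of the proof of Proposition \ref{proposition:filterextensionchar} for the backward direction. The second paragraph of the corollary, the equivalence with embeddings, is the final statement of Proposition \ref{prop:filterextension} applied with $\alpha=\lambda$, so only the first equivalence needs work. Throughout, an $M$-ultrafilter on $\cP_\kappa(\lambda)$ means one on $\cP(\cP_\kappa(\lambda))$, as in Proposition \ref{prop:filterextension}.

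For the forward direction, assume $\kappa$ is $(\delta,\lambda)$-compact. By the first part of Proposition \ref{proposition:filterextensionchar}, $\kappa$ has the $(\delta,\lambda)$-filter extension property. Fix any regular $\theta>\lambda^{<\kappa}$ and any $M\prec H(\theta)$ of size $\lambda$ with $\lambda+1\subseteq M$. Since $\cP_\kappa(\lambda)$ has size $\lambda^{<\kappa}<\theta$, it lies in $H(\theta)$. It is definable there from $\kappa,\lambda\in M$, so by elementarity $\cP_\kappa(\lambda)\in M$. Proposition \ref{prop:filterextension} with $\alpha=\lambda$ (we need $\lambda<\theta$, which holds) then gives the desired ${<}\delta$-complete fine $M$-ultrafilter. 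This shows the ``for all'' version, which trivially implies the ``for some'' version.

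For the backward direction, assume that for some regular $\theta>\lambda^{<\kappa}$ every such $M$ carries such a $U$. Let $\Sigma$ be a ${<}\kappa$-satisfiable $\mathcal L_{\delta,\omega}$-theory in a language of size at most $\lambda$. Then $|\Sigma|\le\lambda^{<\delta}=\lambda$, so as in Proposition \ref{prop:pklequivalence} we identify $\Sigma$ with a subset of $\lambda$, and hence $[\Sigma]^{<\kappa}$ with a subset of $\cP_\kappa(\lambda)$. I would rerun the second half of the proof of Proposition \ref{proposition:filterextensionchar}, but with a $U$ chosen to measure the right sets, as follows.
\begin{itemize}
\item Choose models $\mathfrak A_\Delta\models\Delta$ and the Löwenheim--Skolem substructure $\mathfrak B$ of size $\lambda$, exactly as in that proof.
\item Pick $M\prec H(\theta)$ of size $\lambda$ with $\lambda+1\subseteq M$ containing $\Sigma$, the sequence $\langle\mathfrak A_\Delta\rangle$, all of $B$, and all sets $J_{\varphi,a}$ for $a\in B^{<\omega}$. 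There are only $\lambda$ many of these objects, and we may close off under Skolem functions without increasing the size.
\end{itemize}
The main obstacle is that the $\mathfrak A_\Delta$ may be arbitrarily large, so the product lies outside $H(\theta)$. To fix this, apply downward Löwenheim--Skolem for $\mathcal L_{\delta,\omega}$ first: each $\mathfrak A_\Delta$ can be taken of size at most $\lambda^{<\delta}=\lambda$, realized on a subset of $\lambda$. Then each element of $B$ is a function from $[\Sigma]^{<\kappa}$ to $\lambda$, hence lies in $H(\theta)$ because $\lambda^{<\kappa}<\theta$. Likewise every $J_{\varphi,a}$ is a subset of $\cP_\kappa(\lambda)$ and lies in $H(\theta)$, so these can all be put into $M$. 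Now transfer each $J_{\varphi,a}\subseteq[\Sigma]^{<\kappa}$ to the corresponding subset of $\cP_\kappa(\lambda)$ by adding the condition $x\in[\Sigma]^{<\kappa}$.

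Fineness of $U$ gives that $U$ contains each $\hat\varphi$. It also gives $[\Sigma]^{<\kappa}\in U$, because for $|\Sigma|=\lambda$ we may simply identify $\Sigma$ with $\lambda$. With these facts, the construction of $\sim$, the quotient $\mathfrak D=\mathfrak B/_\sim$, the Łoś-style induction, and the conclusion $\mathfrak D\models\Sigma$ all go through verbatim, since they only use that $U$ is ${<}\delta$-complete, contains $F$, and measures the sets $J_{\varphi,a}$ for $a\in B^{<\omega}$. Hence $\kappa$ is $(\delta,\lambda)$-compact.
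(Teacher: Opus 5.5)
Your proposal is correct and follows the paper's own argument. The forward direction and the embedding characterization come from Propositions~\ref{proposition:filterextensionchar} and~\ref{prop:filterextension} with $\alpha=\lambda$, and the backward direction reruns the second half of the proof of Proposition~\ref{proposition:filterextensionchar} with $M\prec H(\theta)$ chosen to contain the $\lambda$ many sets $J_{\varphi,a}$.

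There are two minor points. First, the L\"owenheim--Skolem shrinking of the $\mathfrak A_\Delta$ is unnecessary: only the sets $J_{\varphi,a}$ have to lie in $M$, and these are subsets of $\cP_\kappa(\lambda)$, so they are automatically in $H(\theta)$ because $\theta>\lambda^{<\kappa}$. Second, if $|\Sigma|<\lambda$, fineness actually forces $[\Sigma]^{<\kappa}\notin U$, so you should first pad $\Sigma$ to size exactly $\lambda$, e.g.\ with sentences $c_\xi=c_\xi$ for $\lambda$ many fresh constants. The paper's identification of $[\Sigma]^{<\kappa}$ with $\cP_\kappa(\lambda)$ glosses over this point as well.
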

\begin{proof}
  The forward direction and the equivalence with respect to elementary embeddings are immediate from Propositions \ref{prop:filterextension} and \ref{proposition:filterextensionchar} above. The reverse direction follows from the second part of the proof of Proposition \ref{proposition:filterextensionchar} -- note that, using the notation of that proof, the set $\Sigma$ has size at most $\lambda^{<\delta}=\lambda$ and hence we can identify $[\Sigma]^{<\kappa}$ with $\cP_\kappa(\lambda)$. Then, again observing that proof, a fine, ${<}\delta$-complete filter that measures a particular $\lambda$-size subset of $\cP(\cP_\kappa(\lambda))$ -- namely all the relevant sets $J_{\varphi,a}$, of which there are $\lambda^{<\delta}=\lambda$ many, is sufficient to proceed with the argument from that proof, and whenever $\theta>\lambda^{<\kappa}$ is regular, $H(\theta)$ contains all subsets of $\cP_\kappa(\lambda)$ as elements, so we may pick $M\prec H(\theta)$ to contain all the relevant sets~$J_{\varphi,a}$.
\end{proof}

\begin{appendix}
    \section[]{On the naturalness of list colouring}\label{natural}

The point of this section is to remark that for finitely many colours, colouring and list colouring are essentially the same, and that therefore, list colouring is just a way of generalizing graph colouring with finitely many colours to infinitely many colours.

\begin{observation}\label{observation:GandH}
  If $\G=\langle G,E\rangle$ is a graph and $L\colon G\to\mathcal P(\delta)$ with $\delta$ finite, then there is a graph~$\cH(\G)=\cH=\langle H,F\rangle$ with $H=G\mathrel{\dot\cup} K_\delta$ and $F\cap G^2=E$, with the following properties:
  \begin{itemize}
    \item Every $L$-colouring of $\G$ can be extended to a $\delta$-colouring of $\cH$.
    \item For any $\delta$-colouring $d$ of $\cH$, there is a permutation $\pi$ of $\delta$ such that $(\pi\circ d){\upharpoonright}G$ is an $L$-colouring of $\G$.
    \item If $\bar{\G}\le\G$, then $\cH(\bar{\G})$ is the restriction of $\cH=\cH(\G)$ to $\bar G\mathrel{\dot\cup} K_\delta$.
  \end{itemize}
\end{observation}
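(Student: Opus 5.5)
We take a fresh copy of the complete graph $K_\delta$ with vertices $k_0,\dots,k_{\delta-1}$, disjoint from $G$. Let $H=G\mathrel{\dot\cup}\{k_0,\dots,k_{\delta-1}\}$. The edge set $F$ consists of three parts: the edges $E$ on $G$; all edges among the $k_i$; and, for every $g\in G$ and every $i<\delta$ with $i\notin L(g)$, an edge between $g$ and $k_i$. Then $F\cap G^2=E$ is immediate. The definition of $F$ is local in each $g$, since whether $g$ is adjacent to $k_i$ depends only on $L(g)$. Hence for $\bar\G\le\G$ (with $L$ restricted to $\bar G$), the graph $\cH(\bar\G)$ is exactly the induced subgraph of $\cH(\G)$ on $\bar G\mathrel{\dot\cup}K_\delta$, which gives the third item.

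For the first item, let $c$ be an $L$-colouring of $\G$. We extend it by setting $c(k_i)=i$.
\begin{itemize}
\item Edges inside $G$ are respected because $c$ is an $L$-colouring.
\item Edges inside $K_\delta$ are respected because the $k_i$ receive distinct colours.
\item If $g$ is adjacent to $k_i$, then $i\notin L(g)$, while $c(g)\in L(g)$, so $c(g)\neq i=c(k_i)$.
\end{itemize}

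For the second item, let $d$ be a $\delta$-colouring of $\cH$. Since $K_\delta$ is complete on $\delta$ vertices and only $\delta$ colours are available, $i\mapsto d(k_i)$ is injective, hence a bijection of $\delta$ because $\delta$ is finite. Let $\pi$ be its inverse, so that $\pi(d(k_i))=i$.
\begin{itemize}
\item $\pi\circ d$ is still a proper colouring, because $\pi$ is injective.
\item For $g\in G$, write $j=\pi(d(g))$; we must check $j\in L(g)$. Otherwise $g$ is adjacent to $k_j$ by construction. But $\pi(d(k_j))=j=\pi(d(g))$, so $d(g)=d(k_j)$, contradicting properness.
\end{itemize}
Thus $(\pi\circ d)\restriction G$ is an $L$-colouring of $\G$.

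No step is a real obstacle. The one point needing care is the finiteness of $\delta$, which is exactly what makes the injection from $K_\delta$ into the $\delta$ colours surjective, and hence makes $\pi$ exist.
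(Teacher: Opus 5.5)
Your proposal is correct and follows the paper's proof essentially verbatim. It uses the same construction (a disjoint $K_\delta$ with $g$ joined to $k_i$ exactly when $i\notin L(g)$), the same extension $k_i\mapsto i$ for the first item, and the same permutation $\pi$ inverting the bijection $d{\upharpoonright}K_\delta$ for the second, with your contradiction argument simply spelling out why $(\pi\circ d)(g)\in L(g)$.
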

\begin{proof}
  Given $\G$, extend it by adding a disjoint copy of $K_\delta$, the complete graph with $\delta$-many nodes, in order to obtain $\cH$. Whenever $g\in G$ and $n\not\in L(g)$, we also add an edge between $g$ and $n$ in $\cH$.
  Now, given an $L$-colouring $c$ of $\G$, we may extend it to a $\delta$-colouring $d$ of $\cH$ by letting $d(n)=n$ for every $n\in K_\delta$. Since $n$ is connected to $g\in G$ only if $n\not\in L(g)$ it follows that $d(g)=c(g)\ne n=d(n)$, which implies that $d$ is a $\delta$-colouring of $\cH$, as desired.
  
  On the other hand, given a $\delta$-colouring $d$ of $\cH$, its restriction to $K_\delta$ is injective, hence we may find a permutation $\pi$ of $\delta$ such that $\pi\circ d{\upharpoonright}K_\delta=\id_\delta$. Then obviously, also $\pi\circ d$ is a $\delta$-colouring of $\cH$, and its restriction to $G$ is an $L$-colouring of $\G$, since whenever $g\in G$ and $n\not\in L(g)$, then there is an edge between $g$ and $n$ in $\cH$, and therefore $\pi\circ d(g)\ne n$.

  The final item is immediate from our construction of $\cH$.
\end{proof}

\begin{corollary}
  If $\delta$ is finite, then $\kappa$-compactness for $\delta$-colouring and $\kappa$-compact\-ness for $\delta$-list-colouring are equivalent.
\end{corollary}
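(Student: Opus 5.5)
The plan is to deduce the corollary directly from Observation~\ref{observation:GandH}, proving the two implications separately for a fixed finite $\delta$ and infinite $\kappa$.

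\emph{From list colouring to colouring.} A $\delta$-colouring is exactly an $L$-colouring for the constant list $L\equiv\delta$. Given a graph $\G$ whose ${<}\kappa$-size subgraphs are all $\delta$-colourable, every such subgraph is $L$-colourable. So $\kappa$-compactness for $\delta$-list-colouring yields an $L$-colouring of $\G$, which is a $\delta$-colouring.

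\emph{From colouring to list colouring.} Let $\G$ be a graph and $L\colon G\to\cP(\delta)$, and assume every ${<}\kappa$-size subgraph of $\G$ is $L$-colourable. Form $\cH=\cH(\G)$ as in Observation~\ref{observation:GandH}. To see that every ${<}\kappa$-size subgraph $\cH'$ of $\cH$ is $\delta$-colourable, proceed as follows.
\begin{enumerate}
\item Put $\bar G=H'\cap G$, which has size ${<}\kappa$, and let $\bar\G$ be the induced subgraph of $\G$ on $\bar G$.
\item By hypothesis, $\bar\G$ has an $\bar L$-colouring, where $\bar L=L\restriction\bar G$.
\item By the first item of the observation, this colouring extends to a $\delta$-colouring of $\cH(\bar\G)$.
\item By the third item, $\cH(\bar\G)$ is the restriction of $\cH$ to $\bar G\mathrel{\dot\cup}K_\delta$. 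This set contains $H'$ and still has size ${<}\kappa$, because $\delta$ is finite and $\kappa$ is infinite.
\item Restricting this $\delta$-colouring to $\cH'$ gives a $\delta$-colouring of $\cH'$.
\end{enumerate}
Now $\kappa$-compactness for $\delta$-colouring gives a $\delta$-colouring $d$ of $\cH$. By the second item of the observation, there is a permutation $\pi$ of $\delta$ such that $(\pi\circ d)\restriction G$ is an $L$-colouring of $\G$, as required.

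The only points needing care are in the second direction. First, a ${<}\kappa$-size subgraph of $\cH$ may contain vertices of $K_\delta$, which is why we enlarge to $\bar G\mathrel{\dot\cup}K_\delta$; this is where finiteness of $\delta$ is used. Second, the subgraphs in question are induced subgraphs, matching the third item of Observation~\ref{observation:GandH}. If non-induced subgraphs were allowed, the argument would still go through, since $\delta$-colourability passes to subgraphs with fewer edges.
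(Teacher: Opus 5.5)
Your proof is correct and is the paper's argument: the easy direction uses the constant list $\delta$, and the converse passes through $\cH(\G)$ from Observation~\ref{observation:GandH}, with your steps (1)--(5) spelling out the paper's one-line appeal to that observation for the ${<}\kappa$-size subgraphs of $\cH$. One correction to your closing remark: the size of $\bar G\mathrel{\dot\cup}K_\delta$ is irrelevant (only $|\bar G|<\kappa$ is needed to apply the hypothesis), and finiteness of $\delta$ is actually used inside the observation, in the permutation step, where the injective colouring of $K_\delta$ must map onto $\delta$.
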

\begin{proof}
  Since every $\delta$-colouring is an $L$-colouring for $L$ the constant function with value $\delta$, $\kappa$-compactness for $\delta$-list-colouring implies $\kappa$-compactness for $\delta$-colouring. Now assume that $\kappa$-compactness for $\delta$-colouring holds. Let $\G$ be a graph and let $L\colon G\to\cP(\delta)$ be a map such that every subgraph of $\G$ of size less than $\kappa$ has an $L$-colouring. Let $\cH$ be the graph witnessing Observation \ref{observation:GandH} with respect to~$\G$. By Observation \ref{observation:GandH} applied to subgraphs $\bar\G$ of $\G$ of size less than $\kappa$, this  means that every subgraph of $\cH$ of size less than $\kappa$ has a $\delta$-colouring, and therefore by our assumption, $\cH$ has a $\delta$-colouring. Now, again by Observation \ref{observation:GandH}, this implies that~$\G$ has an $L$-colouring, as desired.
\end{proof}

\end{appendix}

\section*{Open Questions}

The most obvious questions that naturally arise from the results of our paper seem to be those concerning the properties of our hierarchy of compact cardinals, and we ask what we think might be very interesting sample questions regarding $(\omega_1,\kappa)$-compact cardinals $\kappa$ below:

\begin{question}
    \begin{itemize}
      \item Can $\kappa=\aleph_{\omega_1}$ be $(\omega_1,\kappa)$-compact?
      \item Is an $(\omega_1,\kappa)$-compact cardinal $\kappa$ always a limit cardinal?
      \item Does an $(\omega_1,\kappa)$-compact cardinal $\kappa$ always have uncountable cofinality?
      \item Does the assumption of the existence of an $(\omega_1,\kappa)$-compact cardinal $\kappa$ have consistency strength beyond $\ZFC$?
    \end{itemize}
\end{question}

\section*{Acknowledgements}

The first author was supported by the European Research Council through the ERC Synergy Grant POCOCOP (grant agreement No.~101071674). Funded by the European Union. Views and opinions expressed are, however, those of the authors only and do not necessarily reflect those of the European Union or the European Research Council Executive Agency. Neither the European Union nor the granting authority can be held responsible for them.

The authors used AI tools by Anthropic (Claude Opus 5) and by OpenAI (GPT-5.6 Sol), both accessed in August 2026, to proofread the paper. All outputs and suggested corrections were independently verified by the authors, who take full responsibility for the content of the paper.

\bibliographystyle{amsplain}
\bibliography{references}

\end{document}